\def\@tocline#1#2#3#4#5#6#7{\relax
  \ifnum #1>\c@tocdepth 
  \else
    \par \addpenalty\@secpenalty\addvspace{#2}%
    \begingroup \hyphenpenalty\@M
    \@ifempty{#4}{%
      \@tempdima\csname r@tocindent\number#1\endcsname\relax
    }{%
      \@tempdima#4\relax
    }%
    \parindent\z@ \leftskip#3\relax \advance\leftskip\@tempdima\relax
    \rightskip\@pnumwidth plus4em \parfillskip-\@pnumwidth
    #5\leavevmode\hskip-\@tempdima
      \ifcase #1
       \or\or \hskip 1em \or \hskip 2em \else \hskip 3em \fi%
      #6\nobreak\relax
    \hfill\hbox to\@pnumwidth{\@tocpagenum{#7}}\par
    \nobreak
    \endgroup
  \fi}
\newtheorem{theorem}{Theorem}[section]
\newtheorem{conjecture}[theorem]{Conjecture}
\newtheorem{corollary}[theorem] {Corollary}
\newtheorem{lemma}[theorem]{Lemma}
\newtheorem{proposition}[theorem]{Proposition}
\newtheorem*{theorem*}{Theorem}
\theoremstyle{definition}
\newtheorem{definition}[theorem]{Definition}
\newtheorem{example}[theorem]{Example}
\newtheorem{remark}[theorem]{Remark}
\newtheorem*{remark*}{Remark}
\newcommand\R{\mathbb{R}}
\newcommand\Z{\mathbb{Z}}
\newcommand\C{\mathbb{C}}
\newcommand\p{\mathfrak{p}}
\newcommand{\TC}{\mathrm{TC}}
\newcommand{\ct}{\mathrm{cat}}
\newcommand{\secat}{\mathrm{secat}}
\newcolumntype{x}[1]{>{\centering\arraybackslash}p{#1}}
\begin{document}

\begin{center}

{{\Large \textbf { \sc Equivariant and invariant parametrized topological complexity}}
\\

\medskip

{\sc Ramandeep Singh Arora \footnote{Corresponding Author}}\\
{\footnotesize Department of Mathematics, Indian Institute of Science Education and Research Pune, Maharashtra, India}\\

{\footnotesize e-mail: {\it ramandeepsingh.arora@students.iiserpune.ac.in}}}\\

\medskip

{\sc Navnath Daundkar}\\
{\footnotesize Department of Mathematics, Indian Institute of Technology Madras, Chennai, Tamil Nadu, India}\\

{{\footnotesize e-mail: {\it navnath@iitm.ac.in}}}\\

\end{center}

\medskip
\begin{center}
  {\sc Abstract}\\
  \end{center}
For a $G$-equivariant fibration $p \colon E\to B$, we introduce and study the  invariant analogue of Cohen, Farber and Weinberger's parametrized topological complexity, called the invariant parametrized topological complexity. 
This notion generalizes the invariant topological complexity introduced by Lubawski and Marzantowicz. 
When $G$ is a compact Lie group acting freely on $E$, we show that the invariant parametrized topological complexity of the $G$-fibration $p \colon E\to B$ coincides with the parametrized topological complexity of the induced fibration $\overline{p} \colon \overline{E} \to \overline{B}$ between the orbit spaces. 
Furthermore, we compute the invariant parametrized topological complexity of equivariant Fadell–Neuwirth fibrations, which measures the complexity of motion planning in the presence of obstacles with unknown positions, where the order of their placement is irrelevant.
In addition, we study the equivariant sectional category and the equivariant parametrized topological complexity, which serve as essential tools for obtaining several results in this paper.

\hrulefill

\noindent
{\small \textbf{Keywords:} {equivariant parametrized topological complexity, invariant topological complexity, equivariant sectional category, Lusternik-Schnirelmann category, parametrized topological complexity, Fadell-Neuwirth fibrations, equivariant fibrations}}

\noindent{\small {\bf 2020 Mathematics Subject Classification:}} {{55M30, 55R91, 55S40, 55R80}}


\section{Introduction}
\label{sec:intro}

The \emph{topological complexity} of a space $X$, denoted by $\TC(X)$, is defined as the smallest positive integer $k$ such that the product space $X \times X$ can be covered by open sets $\{U_1,\dots, U_k\}$, where each $U_i$ admits a continuous section of the free path space fibration
\begin{equation}
\label{eq: free path space fibration}
    \pi \colon PX\to X\times X \quad \text{defined by} \quad \pi(\gamma)=(\gamma(0),\gamma(1)), 
\end{equation}
where $PX$ denotes the free path space of $X$ equipped with the compact-open topology. 
The concept of topological complexity was introduced by Farber in \cite{FarberTC} to analyze the computational challenges associated with motion planning algorithms for the configuration space $X$ of a mechanical system. 
Over the past two decades, this invariant has attracted significant attention and has been a subject of extensive research.

\subsection*{Parameterized motion planning problem}

Recently, a novel parametrized approach to the theory of motion planning algorithms was introduced in \cite{farber-para-tc, PTCcolfree}. 
This approach provides enhanced universality and flexibility, allowing motion planning algorithms to operate effectively in diverse scenarios by incorporating external conditions. 
These external conditions are treated as parameters and form an integral part of the algorithm's input.
A parametrized motion planning algorithm takes as input a pair of configurations subject to the same external conditions and produces a continuous motion of the system that remains consistent with these external conditions.

We now briefly define the concept of parametrized topological complexity.  
For a fibration $p \colon E \to B$, let $E\times_B E$ denote the fibre product, which is the space of all pair of points in $E$ that lie in a common fibre of $p$. 
Let $E^I_B$ denote the space of all paths in $E$ whose images are contained within a single fibre. 
Define the \emph{parametrized endpoint map} 
\begin{equation}
\label{eq: Pi}
    \Pi \colon E^I_B\to E\times_{B}E \quad \text{by} \quad \Pi(\gamma)=(\gamma(0),\gamma(1)).    
\end{equation}
In \cite{farber-para-tc}, it is shown that $\Pi$ is a fibration.
The \emph{parametrized topological complexity} of a fibration $p \colon E \to B$, denoted by $\TC[p \colon E\to B]$, is the smallest positive integer $k$ such that there is an open cover $\{U_1,\dots, U_k\}$ of $E \times_B E$, where each $U_i$ admits a continuous section of $\Pi$.
For further details and interesting computational results for parametrized topological complexity, see \cite{farber-para-tc}, \cite{PTCcolfree}, \cite{ptcspherebundles} and \cite{minowa2024parametrized}.
Additionally, the concept has been extended to fibrewise spaces by Garc\'{\i}a-Calcines in \cite{fibrewise}. On the other hand, Crabb \cite{crabb2023fibrewise} established some computational results in the fibrewise setting.

One of the key motivations for introducing this concept was to address the challenge of collision-free motion planning in environments where obstacles have unknown positions in advance.  
This can be described by the following scenario, as illustrated in \cite{farber-para-tc}: A military commander oversees a fleet of $t$ submarines navigating waters with $s$ mines. 
The positions of these mines change every 24 hours. 
Each day, the commander must determine a motion plan for each submarine, ensuring that they travel from their current locations to their designated destinations without colliding with either the mines or other submarines.
A parametrized motion planning algorithm will take as input the positions of the mines on the given day and the current and the desired positions of the submarines and will produce as output a collision-free motion of a fleet. 
Hence, the complexity of the universal motion planning algorithm in this setting can be described as the parametrized topological complexity of the Fadell-Neuwirth fibration
$$
    p \colon F(\R^d,s+t) \to F(\R^d, s), \quad (x_1,\dots,x_s,y_1,\dots,y_t) \mapsto (x_1,\dots,x_s)
$$
where $F(\R^d,s)$ is the configuration space of $s$ distinct points lying in $\R^d$, see \Cref{sec: examples}.
 
However, in a real-life scenario, the specific order in which the mines are placed should be irrelevant.
For the two configurations of mines,  
$$
    (x_1, \dots, x_s) \quad \text{ and } \quad (x_{\sigma(1)},\dots,x_{\sigma(s)}),
$$
for any $\sigma$ in the permutation group $\Sigma_s$, the military commander should assign the same motion plan for the submarines.
This is because both configurations describe the mines being placed at the same set of positions, regardless of their labeling. 
Thus, we should consider the unordered configuration space 
$
    F(\R^d,s)/\Sigma_s
$
for the placement of mines. 
Hence, in this new perspective, the complexity of the universal motion planning algorithm should be described as the parametrized topological complexity of the fibration
$$
    \overline{p} \colon \overline{F(\R^d,s+t)} \to \overline{F(\R^d, s)},
$$
which is induced from $p$ by taking the quotient under the natural action of $\Sigma_s$ on the configuration spaces. 

In this paper, we introduce the notion of invariant parametrized topological complexity for a $G$-fibration $p\colon E\to B$, denoted by $\TC^G[p \colon E\to B]$, which seems to provide a more suitable framework for analyzing the impact of symmetries on parametrized motion planning algorithms.
The invariant parametrized topological complexity is a parametrized analogue of the invariant topological complexity, which was introduced by Lubawski and Marzantowicz \cite{invarianttc}. 
The invariant topological complexity for a $G$-space $X$, denoted by $\TC^G(X)$, behaves well with respect to quotients. 
In particular, if a compact Lie group $G$ acts freely on $X$, then the equality $\TC^G(X) = \TC(X/G)$ holds (see \cite[Theorem 3.10]{invarianttc}). 
Generalizing this to the parameterized setting we establish the following theorem.

\begin{theorem*}
Suppose $G$ is a compact Lie group. 
Let $p \colon E \to B$ be a $G$-fibration and let $\overline{p} \colon \overline{E} \to \overline{B}$ be the induced fibration between the orbit spaces. 
If the $G$-action on $E$ is free and $\overline{E} \times \overline{E}$ is hereditary paracompact, then
    \[
     \TC^{G}[p \colon E \rightarrow B]
       = \TC[\overline{p} \colon \overline{E} \rightarrow \overline{B}]. 
    \]
\end{theorem*}


\subsection*{Outline of the paper}

The aim of this paper is twofold. First, we examine various properties of the equivariant sectional category and equivariant parametrized topological complexity. Using these properties, we develop and analyze the new concept of invariant parametrized topological complexity, which we introduce in \Cref{sec: inv-para-tc}.

In \Cref{subsec: eq-fibrations}, we introduce the concept of equivariant fibrations and present several examples, including equivariant covering maps.

In \Cref{subsec: eq-secat}, we study the equivariant sectional category of a $G$-fibration, and establish multiple lower bounds in \Cref{thm: cohomological lower bound for eq-secat}, \Cref{prop: secat(overline(p)) leq secat_G(p)} and \Cref{prop: fixed-point-and-subgroup-ineq for equi-secat}. 
We also provide an equivariant homotopy dimension-connectivity upper bound in \Cref{thm: G-homo-dim ub on equi-secat}. 
Afterwards, we establish product inequalities in \Cref{prop: special-prod-ineq for equi-secat} and \Cref{cor: equi-secat of pullback fibration under diagonal map}. 

In \Cref{subsec:eq-LS-category}, we recall the notion of the equivariant LS category of a $G$-space. 
In this section, we establish a lower bound in terms of fixed point sets, and provide an equivariant homotopy dimension-connectivity upper bound, as stated in \Cref{prop: subgroup-ineq for equi-cat} and \Cref{thm: G-hom-dim on equi-cat}, respectively. 

Subsequently, \Cref{subsec:invtc} devoted to the equivariant and invariant topological complexity of a $G$-space, and we provide an equivariant homotopy dimension-connectivity upper bound for the former in \Cref{thm: G-hom-dim ub for equi-tc}.

In \Cref{sec: equi-para-tc}, we explore various properties of the equivariant parametrized topological complexity of $G$-fibrations $p\colon E\to B$. Our main result \Cref{thm: equivalent defn of a section of equi-para-tc}, characterizes the elements of a parametrized motion planning cover as the $G$-compressible subsets of the fibre product $E\times_B E$ into the diagonal $\Delta(E)$. Furthermore, we establish some lower bounds and the product inequalities in \Cref{prop: subgroup-ineq for equi-para-tc},  \Cref{thm: non-equi coho-lower-bound for equi-para-tc} and \Cref{thm: prod-ineq for equi-para-tc}, respectively.

In \Cref{sec: inv-para-tc}, we introduce the notion of invariant parametrized topological complexity for $G$-fibrations. 
We establish the fibrewise $G$-homotopy invariance of this notion, and show that it generalizes both the parametrized and invariant topological complexity; see \Cref{thm:G-homotopy-invariace-of-inv-para-tc} and \Cref{prop: special cases of inv-para-tc}, respectively. 
For a $G$-fibration $p\colon E\to B$, in \Cref{thm: equivalent defn of a section of in-para-tc}, we show that the elements of an invariant parametrized motion planning cover can be characterized as the $(G\times G)$-compressible subsets of the fibre product $E\times_{B/G} E$ into the saturated diagonal $\mathbb{k}(E) = E \times_{E/G} E$. 
In \Cref{subsec:prop-bounds}, we investigate various properties and bounds for $\TC^G[p\colon E \to B]$. 
For example, we establish inequality under pullbacks (\Cref{prop: pullback-ineq for inv-para-tc}), dimensional upper bound (\Cref{prop: inva-para-tc bounds}), lower bound (\Cref{prop: subgroup-ineq for inv-para-tc}), cohomological lower bounds (\Cref{thm: cohomological lower bound for inv-para-tc} and \Cref{thm: non-equi coho-lower-bound for inv-para-tc}), and product inequality (\Cref{thm: prod-ineq for inv-para-tc}). 
Finally, we  prove one of our main results, \Cref{thm: inv-para-tc under free action}, which shows that the $\TC^G[p\colon E\to B]$ coincides with the parametrized topological complexity of the corresponding orbit fibration, when $G$ acts freely on $E$.
We conclude the section with some examples illustrating the application of \Cref{thm: inv-para-tc under free action}.

In \Cref{sec: examples}, we compute the invariant parametrized topological complexity of the equivariant Fadell-Neuwirth fibrations. 
Specifically, in \Cref{thm:estimates-for-FN-fibrations} and \Cref{thm:estimates-for-FN-fibrations for d=2}, we establish:


\begin{theorem*}
Suppose $s\geq 2$, $t\geq 1$ and $d \geq 3$. Then
$$
\TC^{\Sigma_s}[p \colon F(\R^d,s+t)\to F(\R^d, s)]=
\begin{cases}
   2t+s, & \text{if $d$ is odd},\\
   \text{either } 2t+s-1 \text{ or } 2t+s & \text{if $d$ is even}.
\end{cases}   
$$
\end{theorem*}

\begin{theorem*}
Suppose $s\geq 2$ and $t\geq 2$. Then
$$
    \TC^{\Sigma_s}[p \colon F(\R^2,s+t)\to F(\R^2, s)] = 2t+s-1.
$$
\end{theorem*}

\vspace{3mm}
\subsection*{Notations and conventions} Throughout the text, $G$ denotes a general topological group acting on Hausdorff spaces, unless stated otherwise. 
We adopt standard terminology and notation from equivariant topology, such as $G$-spaces, $G$-maps, $G$-homotopies and related notions.
In this paper, all category-type invariants are taken to be un-normalized, that is, they equal the number of open sets in the cover. 
Thus, our definitions exceed by one those in \cite{farber-para-tc, PTCcolfree}, but agree with those used in \cite{FarberTC, colmangranteqtc, invarianttc}.

\section{Preliminaries}
\label{sec:preliminaries} 

In this section, we systematically introduce and study various numerical invariants: equivariant sectional category, equivariant LS-category, equivariant topological complexity, $A$-Lusternik-Schnirelmann $G$-category, and invariant topological complexity.



\subsection{$G$-Fibrations}
\hfill\\ \vspace{-0.7em}
\label{subsec: eq-fibrations}

We begin by recalling the definition of a $G$-fibration, followed by a few examples. 
For a more detailed discussion, we refer the reader to \cite{gevorgyan2023equivariant} and \cite[Section 2]{grant2019symmetrized}.

\begin{definition}
A $G$-map $p \colon E \to B$ is called a $G$-fibration if it has the $G$-homotopy lifting property with respect to any $G$-space $X$.
More precisely, if $H \colon X \times I \to B$ is a $G$-homotopy and $h \colon X \to E$ is a $G$-map with $p \circ h = H_0$, then there exists a $G$-homotopy $\widetilde{H} \colon X \times I \to E$ that satisfies $p \circ \widetilde{H} = H$ and $\widetilde{H}_0 = h$.
\end{definition}

\begin{example}
\label{example: G-fibrations}
    Here we list some examples of $G$-fibrations.
    \begin{enumerate}
        \item For any $G$-spaces $B$ and $F$, the projection maps $\pi_1 \colon B \times F \rightarrow B$ and $\pi_2 \colon B \times F \to F$ are $G$-fibrations. 

        \item If $G$ is a compact Hausdorff topological group, then every principal $G$-bundle over a paracompact space is a $G$-fibration, where $G$ acts trivially on the base. 
        This can be shown using \cite[Theorem 7]{gevorgyan2023equivariant}. 
        In particular, the Hopf fibration $S^1 \hookrightarrow S^3 \to S^2$ is a $S^1$-fibration.
        
        \item For any space $X$, the free path space fibration $\pi \colon PX \to X \times X$ is a $\mathbb{Z}_2$-fibration, with $\mathbb{Z}_2$-action on $PX$ given by reversal of paths and on $X \times X$ by transposition of factors, see \cite[Example 2.6]{grant2019symmetrized}.

        \item For any $G$-space $X$, the free path space fibration is a $G$-fibration. 
        Moreover, if $x_0 \in X$ is a fixed point under $G$-action, then the path space fibration is also a $G$-fibration. 
        See \Cref{subsec:invtc} and \Cref{subsec:eq-LS-category} respectively for more details.
    \end{enumerate}
\end{example}

The following theorem provides a sufficient condition for a fibration to be a $G$-fibration. 
We note that although the proof is presented for compact Hausdorff groups, it remains valid for arbitrary topological groups. 

\begin{theorem}[{\cite[Lemma 5]{gevorgyan2023equivariant}}]
\label{thm: fibration + unique path lifting implies G-fibration}
If $p \colon E \to B$ is a $G$-map that is a fibration with the unique path-lifting property, then $p$ is a $G$-fibration.
\end{theorem}

The following is an immediate corollary of this theorem.

\begin{corollary}
\label{cor: G-map + covering map implies G-fibration}
If $p \colon E \to B$ is a $G$-map that is a covering map, then $p$ is a $G$-fibration.
In particular, if $p \colon \widetilde{X} \to X$ is a universal covering, then $p$ is a $\pi_1(X)$-fibration, where $\pi_1(X)$ acts on $\widetilde{X}$ via deck transformations and acts trivially on $X$.
\end{corollary}

\begin{example} Here we list some non-trivial examples of $G$-fibrations. 
    \begin{enumerate}

        \item Suppose $\alpha \in \R$.
        Then the universal covering $p \colon \R \to S^1$, given by $t \mapsto e^{2\pi i t}$, is a $\Z$-fibration, where $\Z$ acts on $\R$ by $n \cdot t = t + n \alpha$, and acts on $S^1$ by $n \cdot z = e^{2 \pi i n \alpha}z$.

        \item The universal covering $p \colon S^n \to \R P^n$ is a $\Sigma_{n+1}$-fibration, where the symmetric group $\Sigma_{n+1}$ acts on $S^n$ by
        $
            \sigma \cdot (x_0 , \dots , x_n) 
                = (x_{\sigma(0)} , \dots , x_{\sigma(n)
                }),
        $ 
        and acts on $\R P^n$ by
        $
            \sigma \cdot [x_0 : \dots : x_n] 
                = [x_{\sigma(0)} : \dots : x_{\sigma(n)
                }].
        $
    \end{enumerate}
\end{example}

\subsection{Equivariant sectional category} 
\hfill\\ \vspace{-0.7em}
\label{subsec: eq-secat}

Schwarz \cite{Sva} introduced and studied the notion of sectional category of a fibration, and later by Berstein and Ganea in \cite{secat} for any map. 
The corresponding equivariant analogue was introduced by Colman and Grant in \cite{colmangranteqtc}.

\begin{definition}[{\cite[Definition 4.1]{colmangranteqtc}}\label{def:eqsecat}]
Let $p \colon E\to B$ be a $G$-map. 
The equivariant sectional category of $p$, denoted by $\secat_G(p)$, is the least positive integer $k$ such that there is a $G$-invariant open cover $\{U_1, \ldots, U_k\}$ of $B$ and $G$-maps $s_i \colon U_i \to E$, for $i=1, \ldots, k$, such that $p \circ s_i \simeq_G i_{U_i}$, where $i_{U_i} \colon U_i \hookrightarrow B$ is the inclusion map.
\end{definition}

First we establish a cohomological lower bound on the equivariant sectional category of a $G$-map using Borel cohomology.
This is based on Colman and Grant's work \cite[Theorem 5.15]{colmangranteqtc}, which provides a similar cohomological lower bound on equivariant topological complexity.
To the best of our knowledge, such a bound has not been documented in the literature. 
We believe that this result must already be known to experts in the field. 
Nevertheless, we provide a thorough proof of this result here.

Suppose $EG \to BG$ is a universal principal $G$-bundle. 
For a $G$-space $X$, let $X^h_{G}$ be the homotopy orbit space of $X$ defined as
$$
    X^h_{G} 
        := EG \times_{G} X 
        = \frac{EG \times X}{(e g, x)\sim (e,g^{-1} x)}, \quad \text{for }e \in EG, g \in G, x \in X
$$
and the Borel $G$-equivariant cohomology $H^{*}_{G}(X;R)$ of $X$ with coefficients in a commutative ring $R$ is defined as $H^{*}_{G}(X;R) := H^{*}(X^h_{G};R)$. 
We note that for a $G$-map $p \colon E \to B$, there is an induced map $p^h_G \colon E^h_G \to B^h_G$.

\begin{theorem}[Cohomological lower bound]
\label{thm: cohomological lower bound for eq-secat}
    Suppose $p \colon E \to B$ is a $G$-map. 
    If there are cohomology classes $u_1,\dots,u_k \in \widetilde{H}^{*}_{G}(B;R)$ (for any commutative ring $R$) with 
    $$
        (p^h_G)^*(u_1) = \cdots = (p^h_G)^{*}(u_k) = 0 \quad \text{and} \quad u_1 \smile \dots \smile u_k \neq 0,
    $$
    then $\secat_G(p) > k$.
\end{theorem}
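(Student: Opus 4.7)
The plan is to reduce the statement to Schwarz's classical cohomological lower bound for non-equivariant sectional category by passing to the Borel construction $X \mapsto X^h_G = EG \times_G X$. The key monotonicity I want is
\[
    \secat(p^h_G) \leq \secat_G(p),
\]
so that any cohomological obstruction to $\secat(p^h_G) \leq k$ immediately rules out $\secat_G(p) \leq k$.

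To establish this monotonicity, assume $\secat_G(p) \leq k$ and pick a $G$-invariant open cover $\{U_1, \ldots, U_k\}$ of $B$ together with $G$-maps $s_i \colon U_i \to E$ satisfying $p \circ s_i \simeq_G i_{U_i}$. Applying $EG \times_G (-)$ to everything in sight, I would check three routine facts: (i) $(U_i)^h_G$ is open in $B^h_G$, since its preimage under the quotient map $EG \times B \to B^h_G$ is the open set $EG \times U_i$; (ii) $\{(U_i)^h_G\}_{i=1}^k$ covers $B^h_G$, since $\{U_i\}$ covers $B$; and (iii) the induced map $(s_i)^h_G \colon (U_i)^h_G \to E^h_G$ is a section of $p^h_G$ up to ordinary homotopy, because a $G$-homotopy between $p \circ s_i$ and $i_{U_i}$ descends (using the trivial $G$-action on $I$) to a homotopy $(U_i)^h_G \times I \to B^h_G$ between $p^h_G \circ (s_i)^h_G$ and the inclusion $(U_i)^h_G \hookrightarrow B^h_G$. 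Together these show $\secat(p^h_G) \leq k$.

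I would then invoke Schwarz's classical bound for $p^h_G \colon E^h_G \to B^h_G$: if $\secat(p^h_G) \leq k$ and $u_1, \ldots, u_k \in \widetilde{H}^*(B^h_G; R)$ satisfy $(p^h_G)^*(u_i) = 0$, then the relative-cohomology argument (each $u_i$ lifts to a class in $H^*(B^h_G, (U_i)^h_G)$, and the cup product of these lifts lies in $H^*(B^h_G, \bigcup_i (U_i)^h_G) = H^*(B^h_G, B^h_G) = 0$, which maps to $u_1 \cup \cdots \cup u_k$) forces $u_1 \cup \cdots \cup u_k = 0$. Since by hypothesis $u_1 \cup \cdots \cup u_k \neq 0$ while $(p^h_G)^*(u_i) = 0$, we obtain a contradiction, so $\secat_G(p) > k$.

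The only points requiring any care are the three routine facts in the monotonicity step; the hardest among them is probably verifying that the $G$-homotopies between $p \circ s_i$ and $i_{U_i}$ descend cleanly to the Borel construction, but this is immediate from the fact that $EG \times_G (-)$ is a functor preserving $G$-homotopies (with trivial $G$-action on the interval). Everything else is a direct transcription of the classical Schwarz argument to the Borel cohomology setting.
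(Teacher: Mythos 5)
Your proposal is correct and follows essentially the same route as the paper: both pass to the Borel construction, observe that the $G$-invariant cover and $G$-homotopy sections descend to give $((j_i)^h_G)^*(u_i)=0$ on each $(U_i)^h_G$, and then run the standard Schwarz relative-cup-product argument on $(B^h_G,(U_i)^h_G)$. The only cosmetic difference is that you package the first half as the monotonicity $\secat(p^h_G)\leq\secat_G(p)$ before invoking the classical bound, whereas the paper inlines that step.
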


\begin{proof}
Suppose $\secat_G(p) \leq k$. 
Then there exists a $G$-invariant open cover $\{U_1, \dots, U_k\}$ of $B$ such that each $U_i$ admits a $G$-equivariant homotopy section $s_i$ of $p$. 
If $j_i \colon U_i \hookrightarrow B$ is the inclusion map, then 
$
    ((j_i)^{h}_G)^*(u_i) = ((s_i)^{h}_G)^*\left((p^{h}_G)^*(u_i)\right) = 0.
$
Hence, the long exact sequence in cohomology of the pair $(B^h_G,(U_i)^h_G)$ yields an element $v_i \in H^{*}(B^h_G,(U_i)^h_G;R)$ such that $((q_i)^{h}_G)^*(v_i)=u_i$, where $q_i \colon B \hookrightarrow (B,U_i)$ is the inclusion map. 
Consequently, we obtain
    $$
        v_1 \smile \cdots \smile v_k 
            \in H^*(B^h_G,\cup_{i=1}(U_i)^h_G;R) 
            = H^*(B^h_G,B^h_G;R) = 0.
    $$
Therefore, by the naturality of the cup product, we have $u_1 \smile \cdots \smile u_k = (q^h_G)^*(v_1 \smile \cdots \smile v_k) = 0$, where $q \colon B \hookrightarrow (B,B)$ is the inclusion map.
\end{proof}



\begin{remark}
Observe that if $G$ acts trivially on $X$, then the lower bound in \Cref{thm: cohomological lower bound for eq-secat} coincides with the cohomological lower bound given by Schwarz in \cite[Theorem 4]{Sva}. 
Moreover, it generalizes the lower bounds for equivariant topological complexity and equivariant parametrized topological complexity previously established by Colman–Grant \cite[Theorem 5.15]{colmangranteqtc} and Daundkar \cite[Theorem 4.25]{D-EqPTC}, respectively.  
\end{remark} 

In practice, however, the difficulty of computing cup products in Borel cohomology (or more generally, in any equivariant cohomology) makes the problem cumbersome. 
We can then ask whether non-equivariant cohomological bounds can be utilized in some way. 
When $G$ is a compact Hausdorff topological group and $p \colon E \to B$ is a $G$-fibration, we will show that the sectional category of $p$ and the sectional category of the induced fibration $\overline{p} \colon \overline{E} \to \overline{B}$ between the orbit spaces are lower bounds for the equivariant sectional category of $p$. 
Note that $\overline{p}$ fits into the commutative diagram
\begin{equation}
\label{diag: orbit-map comm-diag}
    \begin{tikzcd}
    E \arrow[d, "\pi_E"'] \arrow[r, "p"]   & B \arrow[d, "\pi_B"] \\
    \overline{E} \arrow[r, "\overline{p}"] & \overline{B} ,       
    \end{tikzcd}
\end{equation}
where $\pi_B \colon B \to \overline{B}$ and $\pi_E \colon E \to \overline{E}$ are orbit maps.

\begin{proposition} 
\label{prop: secat(overline(p)) leq secat_G(p)}
Let $p \colon E \to B$ be a $G$-fibration.
Then
$
    \secat(\overline{p}) \leq \secat_G(p).
$
\end{proposition}

\begin{proof}
    Suppose $U$ is a $G$-invariant open subset of $B$ with a $G$-equivariant section $s$ of $p$ over $U$. 
    As the orbit map $\pi_B \colon B \to \overline{B}$ is open, we have $\overline{U} := \pi_B(U)$ is an open subset of $\overline{B}$. 
    As $U$ is $G$-invariant, it follows $U$ is saturated with respect to $\pi_B$. 
    Hence, $\pi_B \colon U \to \overline{U}$ is a quotient map.
    Then, by universal property of quotient maps, there exists a unique continuous map $\overline{s} \colon \overline{U} \to \overline{E}$ such that the following diagram 
    \[
\begin{tikzcd}
U \arrow[r, "\pi_E \circ s"] \arrow[d, "\pi_B"'] & \overline{E} \\
\overline{U} \arrow[ru, "\overline{s}"', dashed] &             
\end{tikzcd}
\]
    commutes. 
    Then 
    $$
    \overline{p} ( \overline{s} (\overline{b}))
        = \overline{p} (\overline{s} (\pi_B(b)))
        = \overline{p}(\pi_E(s(b))) 
        = \pi_B (p(s(b))) 
        = \pi_B(b)
        = \overline{b}
    $$
    implies $\overline{s}$ is a section of $\overline{p}$ over $\overline{U}$. Thus, the result follows since $\pi_B \colon B \to \overline{B}$ is surjective.
\end{proof}

\begin{theorem}
\label{thm: secat(overline(p)) = secat_G(p) for free actions}
    Suppose $G$ is a compact Hausdorff topological group and $p \colon E \to B$ is a $G$-fibration.  
    Then $\overline{p} \colon \overline{E} \to \overline{B}$ is a fibration.
    Furthermore, if 
        $G$ acts freely on $B$, then 
        $$
            \secat(\overline{p}) = \secat_G(p).
        $$
\end{theorem}

\begin{proof}
    By \cite[Corollary 2]{gevorgyan2023equivariant}, it follows that $\overline{p}$ is a fibration. 
    Note that the inequality $\secat(\overline{p}) \leq \secat_G(p)$ follows from \Cref{prop: secat(overline(p)) leq secat_G(p)}. 
    Now we will show the reverse inequality. 
    Note that $G$ acts freely on $E$ as well, since $G$ acts freely on $B$ and $p$ is a $G$-map. 
    If $\phi \colon E \to \overline{E} \times_{\overline{B}} B$ denotes the natural $G$-map induced by the universal property of the pullback, then $\phi$ preserves the orbit structure because $G$ acts freely on both $B$ and $E$.
    Hence, the diagram \eqref{diag: orbit-map comm-diag} is a pullback in the category of $G$-spaces, see the proof of \cite[Theorem II.7.3]{bredon-transformation-groups}. 
    Suppose $\overline{U}$ is an open subset of $\overline{B}$ and $\overline{s} \colon \overline{U} \to \overline{E}$ is a section of $\overline{p}$. 
    Let $U=\pi_B^{-1}(\overline{U})$.
    Then, by the universal property of pullbacks, there exists a unique $G$-map $s \colon U \to E$ such that $\pi_E \circ s = \overline{s} \circ \pi_B \colon U \to \overline{E}$ and $p \circ s = i_U \colon U \to B$. 
    Hence, $\secat_G(p) \leq \secat(\overline{p})$.
\end{proof}

\begin{remark}
    As mentioned in \Cref{example: G-fibrations} (3), the free path space fibration $\pi \colon PX \to X \times X$ is a $\mathbb{Z}_2$-fibration for any space $X$.
    The $\mathbb{Z}_2$-action on $PX$ is given by reversal of paths, and on $X \times X$ it is given by transposition of factors. 
    Hence, by \Cref{thm: secat(overline(p)) = secat_G(p) for free actions}, we get $\secat(\overline{\pi}) \leq \secat_{\mathbb{Z}_2}(\pi)$. 
    Thus, we recover the cohomological lower bound on the symmetrized topological complexity  $\TC^{\Sigma}(X)$ in \cite[Theorem 4.6]{grant2019symmetrized}, since the following commutative diagram
    \[
\begin{tikzcd}
\overline{X} \arrow[rr, "\simeq"] \arrow[rd, "\overline{\Delta}"'] &                       & \overline{PX} \arrow[ld, "\overline{\pi}"] \\
                                                                   & \overline{X \times X} &                                           
\end{tikzcd}
    \]
    implies the nilpotency of the kernel of $\overline{\Delta}$ and $\overline{\pi}$ are the same.
\end{remark}

Suppose $X$ is a $G$-space. For a subgroup $H$ of $G$, define the $H$-fixed subspace of $X$ as
$$
    X^H := \{x \in X \mid h \cdot x = x \text{ for all } h \in H\}.
$$

\begin{proposition}
\label{prop: fixed-point-and-subgroup-ineq for equi-secat}
Suppose $p \colon E \to B$ is a $G$-fibration. 
If $H$ and $K$ are subgroups of $G$ such that $E^H$ and $B^H$ are $K$-invariant, and the fixed point map $p^H \colon E^H \to B^H$ is a $K$-map, then
\begin{equation}
\label{eq: subgroup-ineq for equi-secat}
    \secat_K(p^H) \leq \secat_G(p).
\end{equation}
In particular, if $G$ is a compact Hausdorff topological group, then
\begin{enumerate}
\item the fixed point map $p^H \colon E^H \to B^H$ is a fibration for all closed subgroups $H$ of $G$, and
$$
    \secat(p^H) \leq \secat_G(p).
$$
\item $p \colon E \to B$ is a $K$-fibration for all closed subgroups $K$ of $G$, and 
$$
    \secat(p) \leq \secat_K(p) \leq \secat_G(p).
$$
\end{enumerate}
\end{proposition}

\begin{proof}
If $s \colon U \to E$ is a $G$-equivariant section of $p$, then $\left.s\right |_{V} \colon V \to E^H$ is a $K$-equivariant section of $p^H$, where $V=U \cap B^H$. Hence, the inequality \eqref{eq: subgroup-ineq for equi-secat} follows.

If $G$ is compact Hausdorff, then it follows from \cite[Theorem 4 and Theorem 3]{gevorgyan2023equivariant} that $p^H \colon E^H \to B^H$ and $p \colon E \to B$ are a fibration and a $K$-fibration, respectively.
Hence, the following inequalities follow by taking $K$ and $H$ to be the trivial subgroup, respectively.
\end{proof}

The following proposition states some basic properties of the equivariant sectional category. Proofs are left to the reader. For analogous results concerning the non-equivariant sectional category, we refer to \cite[Lemma 2.1]{Grant-ParaTC-group}.

\begin{proposition}
\label{prop: properties of equi-sec-cat}
    Suppose $p \colon E \to B$ is a $G$-map.
    \begin{enumerate}
        \item If $p' \colon E \to B$ is $G$-homotopic to $p$, then $\secat_G(p') = \secat_G(p)$.
        \item If $h \colon E' \to E$ is $G$-homotopy equivalence, then $\secat_G(p \circ h) = \secat_G(p)$.
        \item If $f \colon B \to B'$ is a $G$-homotopy equivalence, then $\secat_G(f \circ p) = \secat_G(p)$.
    \end{enumerate}
\end{proposition}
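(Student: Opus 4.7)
All three parts follow the same pattern: given a $G$-invariant open cover with homotopy sections witnessing one side of the inequality, I would manufacture an analogous cover and sections for the other side, using only the definitions and basic properties of $G$-homotopies. The main point is that the relation ``$p\circ s\simeq_G i_U$'' is stable under pre- and post-composition with $G$-maps and $G$-homotopies.

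For \emph{(1)}, let $H\colon E\times I\to B$ be a $G$-homotopy from $p$ to $p'$, and suppose $\{U_1,\dots,U_k\}$ is a $G$-invariant open cover of $B$ with $G$-maps $s_i\colon U_i\to E$ satisfying $p\circ s_i\simeq_G i_{U_i}$. Then $p'\circ s_i\simeq_G p\circ s_i\simeq_G i_{U_i}$, where the first $G$-homotopy is obtained by restricting $H\circ(s_i\times \mathrm{id}_I)$. So $\secat_G(p')\leq\secat_G(p)$, and swapping $p,p'$ gives the reverse inequality.

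For \emph{(2)}, let $h'\colon E\to E'$ be a $G$-homotopy inverse of $h$. Given a cover $\{U_i\}$ with sections $s_i$ for $p$, set $s_i'=h'\circ s_i$; then $(p\circ h)\circ s_i'=p\circ(h\circ h')\circ s_i\simeq_G p\circ s_i\simeq_G i_{U_i}$, so $\secat_G(p\circ h)\leq\secat_G(p)$. Conversely, given $\{U_i\}$ with sections $\sigma_i$ for $p\circ h$, set $\sigma_i'=h\circ\sigma_i$; then $p\circ\sigma_i'=(p\circ h)\circ\sigma_i\simeq_G i_{U_i}$.

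Part \emph{(3)} is the main technical point, since altering the base requires pulling back open covers through a $G$-homotopy equivalence. Let $f'\colon B'\to B$ be a $G$-homotopy inverse of $f$ with $G$-homotopies $F\colon f\circ f'\simeq_G\mathrm{id}_{B'}$ and $F'\colon f'\circ f\simeq_G\mathrm{id}_B$. Given a cover $\{U_i\}$ of $B$ with sections $s_i$ for $p$, put $V_i=(f')^{-1}(U_i)$, which is a $G$-invariant open cover of $B'$, and set $\tau_i=s_i\circ f'|_{V_i}\colon V_i\to E$. Then
\[
(f\circ p)\circ\tau_i=f\circ(p\circ s_i)\circ f'|_{V_i}\simeq_G f\circ f'|_{V_i}\simeq_G i_{V_i},
\]
using $p\circ s_i\simeq_G i_{U_i}$ for the first $G$-homotopy and $F$ for the second. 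So $\secat_G(f\circ p)\leq\secat_G(p)$. Conversely, given $\{V_i\}$ with sections $\tau_i$ for $f\circ p$, put $U_i=f^{-1}(V_i)$ and $s_i=\tau_i\circ f|_{U_i}$. Then $f\circ p\circ s_i=(f\circ p\circ\tau_i)\circ f|_{U_i}\simeq_G i_{V_i}\circ f|_{U_i}=f\circ i_{U_i}$; pre-composing this $G$-homotopy with the identity and then applying $F'$ to both sides yields
\[
p\circ s_i\simeq_G f'\circ f\circ p\circ s_i\simeq_G f'\circ f\circ i_{U_i}\simeq_G i_{U_i},
\]
which gives $\secat_G(p)\leq\secat_G(f\circ p)$. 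The only subtle point is the last chain: one has to verify that the concatenated $G$-homotopy stays within $B$ (which is automatic, as every term lands in $B$), so no obstacle actually arises, but this is where I would be most careful in writing up the details.
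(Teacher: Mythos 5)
Your proof is correct and is exactly the standard argument the paper has in mind (the paper leaves the proof to the reader, pointing to the non-equivariant analogue in Grant's Lemma 2.1); each step — transporting homotopy sections along the given $G$-homotopies and pulling back covers along the equivariant maps $f$ and $f'$ — goes through because the definition only requires $p\circ s_i\simeq_G i_{U_i}$, not genuine sections. No gaps.
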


\begin{corollary}
\label{cor: equi-sec-cat under homotopy equivalence pullback}
    Suppose $p \colon E \to B$ is a $G$-fibration. 
    If $g \colon B' \to B$ is a $G$-homotopy equivalence and $p' \colon E' \to B'$ is the pullback of $p$ along $g$, then
        $$
            \secat_G(p') = \secat_G(p).
        $$
\end{corollary}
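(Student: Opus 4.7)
The plan is to reduce this corollary to \Cref{prop: properties of equi-sec-cat} by producing the commutative square of the pullback and checking that the top horizontal map is a $G$-homotopy equivalence. Realize the pullback as
$$
    E' = \{(b', e) \in B' \times E \colon g(b') = p(e)\}
$$
with $p'(b', e) = b'$, and denote by $g' \colon E' \to E$, $g'(b', e) = e$, the canonical $G$-map; all structure maps are $G$-equivariant because $g$ and $p$ are, and the square $p \circ g' = g \circ p'$ commutes. Granting for the moment that $g'$ is a $G$-homotopy equivalence, \Cref{prop: properties of equi-sec-cat}(3) applied to $g$ yields $\secat_G(g \circ p') = \secat_G(p')$, while \Cref{prop: properties of equi-sec-cat}(2) applied to $g'$ yields $\secat_G(p \circ g') = \secat_G(p)$. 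Since the two composites coincide, this forces $\secat_G(p') = \secat_G(p)$.

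The genuine step is therefore to verify that $g' \colon E' \to E$ is a $G$-homotopy equivalence whenever $g$ is a $G$-homotopy equivalence and $p$ is a $G$-fibration. This is the equivariant analogue of the classical fact that a pullback of a fibration along a homotopy equivalence is a homotopy equivalence, and I would mimic the classical construction equivariantly. Choose a $G$-homotopy inverse $\bar g \colon B \to B'$ of $g$ together with $G$-homotopies $H \colon g \circ \bar g \simeq_G \mathrm{id}_B$ and $K \colon \bar g \circ g \simeq_G \mathrm{id}_{B'}$. Using the $G$-homotopy lifting property of the $G$-fibration $p$, lift the $G$-homotopy $H \circ (p \times \mathrm{id}_I) \colon E \times I \to B$ starting at $\mathrm{id}_E$ to obtain a $G$-homotopy $\tilde H \colon E \times I \to E$ whose value at time $0$ is a $G$-map $\tilde g \colon E \to E$ with $p \circ \tilde g = g \circ \bar g \circ p$. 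The assignment $h \colon E \to E'$, $e \mapsto (\bar g(p(e)), \tilde g(e))$ is then a well-defined $G$-map, and $g' \circ h = \tilde g \simeq_G \mathrm{id}_E$ by construction. For the other composite, combine the $G$-homotopy $K$ on the base with a further equivariant lift (applied to the fibration $p'$, which is also a $G$-fibration as the pullback of one) to produce a $G$-homotopy $h \circ g' \simeq_G \mathrm{id}_{E'}$.

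I expect the main obstacle to lie entirely in the bookkeeping of this last step: the topology is the standard pullback-of-a-fibration argument, but each path and homotopy must be produced $G$-equivariantly, so the equivariant HLP (rather than the ordinary HLP) has to be invoked at every lift, and one must check that the adjustments needed to build $\tilde H$, $h$, and the two witnessing $G$-homotopies are compatible with the $G$-actions. Once the $G$-homotopy equivalence $g' \colon E' \to E$ is in hand, the corollary follows immediately from \Cref{prop: properties of equi-sec-cat} as outlined above.
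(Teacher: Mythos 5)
Your proposal is correct and follows essentially the same route as the paper: form the pullback square, observe that the induced map $E'\to E$ is a $G$-homotopy equivalence, and conclude via \Cref{prop: properties of equi-sec-cat}(2) and (3) applied to the two factorizations of the common composite. The only difference is that the paper merely asserts (in fact, only as a non-equivariant homotopy equivalence) the step you work out in detail — that the top map of the pullback square is a genuine $G$-homotopy equivalence — so your careful use of the equivariant HLP here is a welcome filling-in of exactly the point the paper glosses over.
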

    
\begin{proof}
    Suppose the following diagram is a pullback
    \[
    \begin{tikzcd}
    E' \arrow[r, "h"] \arrow[d, "p'"'] & E \arrow[d, "p"] \\
    B' \arrow[r, "g"]                  & B.               
    \end{tikzcd}
    \]
    Since $g$ is a $G$-homotopy equivalence and $p$ is a $G$-fibration, it follows that $h$ is also a $G$-homotopy equivalence. Hence, we get
    $$
        \secat_G(p') = \secat_G(g \circ p') = \secat_G(p \circ h) = \secat_G(p).
    $$
    by \Cref{prop: properties of equi-sec-cat}.
\end{proof}

Generalizing Schwarz's dimension-connectivity upper bound on the sectional category, Grant established the corresponding equivariant analogue for the equivariant sectional category in \cite[Theorem 3.5]{grant2019symmetrized}. 
We extend this approach to derive an equivariant homotopy dimension-connectivity upper bound for equivariant sectional category. 
To achieve this, we first introduce the notion of $G$-homotopy dimension for $G$-$\mathrm{CW}$-complexes.

\begin{definition}
    Suppose $X$ is a $G$-$\mathrm{CW}$-complex. 
    The $G$-homotopy dimension of $X$, denoted $\mathrm{hdim}_{G}(X)$, is defined to be
    $$
        \mathrm{hdim}_{G}(X) 
            := \min\{\dim(X') \mid X' \text{ is a $G$-}\mathrm{CW}\text{-complex}, X'\simeq_G X\}.
    $$
\end{definition}



\begin{theorem}
\label{thm: G-homo-dim ub on equi-secat}
Suppose $G$ is a compact Lie group.
Suppose $p \colon E \to B$ is a $G$-fibration with fibre $F$, whose base $B$ is a $G$-$\mathrm{CW}$-complex of dimension at least $2$. 
    If there exists $s \geq 0$ such that the fibre of $p^H \colon E^H \to B^H$ is $(s-1)$-connected for all subgroups $H$ of $G$, then
    $$
        \secat_{G}(p) < \frac{\mathrm{hdim_{G}(B)+1}}{s+1} + 1.
    $$
\end{theorem}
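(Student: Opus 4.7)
The plan is to combine a reduction from $\dim B$ to $\mathrm{hdim}_G(B)$ via \Cref{cor: equi-sec-cat under homotopy equivalence pullback} with the equivariant version of Schwarz's classical dimension-connectivity construction, in the spirit of Grant's argument in \cite[Theorem 3.5]{grant2019symmetrized}.

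For the reduction, by definition of $G$-homotopy dimension there is a $G$-CW complex $B'$ with $\dim B' = n := \mathrm{hdim}_G(B)$ and a $G$-homotopy equivalence $f \colon B' \to B$. The pullback $p' \colon E' \to B'$ of $p$ along $f$ is again a Serre $G$-fibration (pullbacks preserve Serre fibrations $G$-equivariantly), and by the corollary $\secat_G(p') = \secat_G(p)$. We may therefore assume $B$ itself is a $G$-CW complex of dimension $n$.

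It then suffices to build a $G$-invariant open cover $U_1, \ldots, U_k$ of $B$ of cardinality $k = \lceil (n+1)/(s+1) \rceil$, each $U_i$ admitting a $G$-equivariant homotopy section of $p$; for then $\secat_G(p) \leq k < (n+1)/(s+1) + 1$. Following Schwarz, I would partition the equivariant cells of $B$ into $k$ consecutive dimension-windows of width $s+1$, namely cells of dimension in $[(i-1)(s+1),\, i(s+1) - 1]$ for $i = 1, \ldots, k$, and take $U_i$ to be a $G$-invariant open neighborhood of the interiors of the $i$-th window cells chosen (using the $G$-CW structure and equivariant tubular neighborhoods of cells of the form $G/H \times D^r$) so that $U_i$ $G$-equivariantly deformation retracts onto a subcomplex whose effective relative dimension over the union of earlier windows is at most $s$.

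On each $U_i$ the construction of a $G$-equivariant section is then a standard application of equivariant obstruction theory in the sense of Bredon--Illman: the obstruction to extending a $G$-equivariant section of $p$ across an equivariant $(r+1)$-cell lies in Bredon cohomology with coefficients in the system $G/H \mapsto \pi_r(F^H)$, and by hypothesis every $F^H$ is $(s-1)$-connected, so all relevant obstruction groups vanish in the required range. The principal technical obstacle will be making the Schwarz open-set and deformation-retract construction fully $G$-equivariant and compatible across the orbit types appearing in the $G$-CW structure of $B$; this is not conceptually deep but requires careful application of Illman's $G$-equivariant homotopy extension property throughout. The hypothesis $\dim B \geq 2$ merely ensures that the windows are nontrivial; it does not otherwise enter the argument.
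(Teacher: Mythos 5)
Your argument matches the paper's: the same reduction via \Cref{cor: equi-sec-cat under homotopy equivalence pullback}, pulling $p$ back along a $G$-homotopy equivalence to a $G$-CW complex of dimension $\mathrm{hdim}_G(B)$, followed by the dimension-connectivity bound on the pulled-back fibration (which has the same fibre $F$). The only difference is that the paper simply cites Grant's equivariant Schwarz-type theorem \cite[Theorem 3.5]{grant2019symmetrized} for that second step, whereas you re-sketch its window-decomposition and equivariant-obstruction-theory proof.
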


\begin{proof}
    Suppose $f \colon B' \to B$ is a $G$-homotopy between $G$-$\mathrm{CW}$-complexes $B'$ and $B$, and $p' \colon E' \to B'$ is the pullback of $p$ along $f$. 
    Then, by \Cref{cor: equi-sec-cat under homotopy equivalence pullback}, we have $\secat_{G}(p') = \secat_{G}(p)$.
    Since the fibre of $p'$ is also $F$, we get 
    $$
        \secat_{G}(p') < \frac{\mathrm{dim(B')+1}}{s+1}+1,
    $$
    by \cite[Theorem 3.5]{grant2019symmetrized}.
\end{proof}

Our next aim is to establish product inequalities for the equivariant sectional category. 

\begin{definition}
    A $G$-space $X$ is called $G$-completely normal if for any two $G$-invariant subsets $A$ and $B$ of $X$ with $\overline{A} \cap B = A \cap \overline{B} = \emptyset$, there exist disjoint $G$-invariant open subsets of $X$ containing $A$ and $B$, respectively.
\end{definition}

\begin{proposition}
\label{prop: special-prod-ineq for equi-secat}
Suppose $p_i \colon E_i \to B_i$ is a $G$-fibration for $i =1,2$. 
If $G$ is compact Hausdorff, then $p_1 \times p_2 \colon E_1 \times E_2 \to B_1 \times B_2$ is a $G$-fibration, where $G$ acts on $E_1 \times E_2$ and $B_1 \times B_2$ diagonally. 
Furthermore, if 
$B_1 \times B_2$ is completely normal, then
$$
    \secat_{G}(p_1\times p_2) \leq \secat_{G}(p_1) + \secat_{G}(p_2) - 1.
$$
\end{proposition}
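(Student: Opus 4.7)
The plan is to split the proof into two parts, corresponding to the two assertions.

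For the first statement, that $p_1 \times p_2$ is a $G$-fibration, I would verify the $G$-equivariant homotopy lifting property directly. Given a $G$-equivariant map $f \colon X \to E_1 \times E_2$ and a $G$-homotopy $H \colon X \times I \to B_1 \times B_2$ satisfying $(p_1 \times p_2) \circ f = H(\cdot,0)$, project onto each factor: $H_i = \mathrm{pr}_i \circ H$ and $f_i = \mathrm{pr}_i \circ f$. Since the projections are $G$-equivariant, each $H_i$ is a $G$-homotopy lifting $f_i$ through $p_i$. Using that $p_i$ is a $G$-fibration, I obtain $G$-equivariant lifts $\tilde{H}_i \colon X \times I \to E_i$, and the pair $(\tilde{H}_1, \tilde{H}_2)$ is $G$-equivariant with respect to the diagonal action and provides the desired lift of $H$.

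For the inequality, let $m = \secat_G(p_1)$ and $n = \secat_G(p_2)$. Choose $G$-invariant open covers $\{U_i\}_{i=1}^m$ of $B_1$ with $G$-equivariant homotopy sections $\sigma_i$ of $p_1|_{U_i}$ and $\{V_j\}_{j=1}^n$ of $B_2$ with sections $\tau_j$ of $p_2|_{V_j}$. The collection $\{U_i \times V_j\}_{i,j}$ is a $G$-invariant open cover of $B_1 \times B_2$ under the diagonal action, and each $U_i \times V_j$ admits the $G$-equivariant homotopy section $\sigma_i \times \tau_j$ of $p_1 \times p_2$. The goal is to consolidate these $mn$ sets into $m+n-1$ open sets, each still admitting a $G$-equivariant homotopy section, by grouping the indices according to the value of $i+j$.

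The key technical step is an equivariant version of Schwarz's separation argument. Since $G$ is compact Hausdorff and $B_1 \times B_2$ is completely normal, \Cref{lemma: completely normal G-space is G-completely normal} ensures that $B_1 \times B_2$ is $G$-completely normal, and in particular $G$-normal. Using $G$-normality I would first shrink the product cover to a $G$-invariant closed refinement $\{C_{ij}\}$ with $C_{ij} \subseteq U_i \times V_j$. Then, for each $k \in \{1, \ldots, m+n-1\}$, apply $G$-complete normality inductively to the finitely many pairs $(i,j)$ with $i+j = k+1$ to produce pairwise disjoint $G$-invariant open sets $W_{ij}^{k}$ with $C_{ij} \subseteq W_{ij}^{k} \subseteq U_i \times V_j$. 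Setting $W_k = \bigsqcup_{i+j=k+1} W_{ij}^{k}$ gives $G$-invariant open sets, and $\{W_k\}_{k=1}^{m+n-1}$ covers $B_1 \times B_2$ because the $C_{ij}$ already do. Each $W_k$ admits a $G$-equivariant homotopy section defined piecewise by $\sigma_i \times \tau_j$ on $W_{ij}^{k}$, well-defined because the pieces are disjoint and $G$-invariant, yielding $\secat_G(p_1 \times p_2) \leq m + n - 1$.

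The main obstacle is orchestrating the inductive separation so that all auxiliary open sets can be chosen $G$-invariantly; in the non-equivariant case this is Schwarz's classical argument using complete normality, and the equivariant strengthening required here is precisely what \Cref{lemma: completely normal G-space is G-completely normal} supplies. A secondary technical point is ensuring that the closed shrinking of the product cover can be done $G$-equivariantly, which again reduces to $G$-normality. Once the $G$-invariant separating pieces are produced, the piecewise assembly of sections is automatic because each $\sigma_i \times \tau_j$ is already $G$-equivariant and the supporting $W_{ij}^{k}$ are pairwise disjoint and $G$-invariant.
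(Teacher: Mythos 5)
Your treatment of the first assertion is correct and in fact more elementary than the paper's: you verify the diagonal $G$-equivariant homotopy lifting property factorwise (which needs no compactness), whereas the paper observes that $p_1\times p_2$ is a $(G\times G)$-fibration and then restricts to the diagonal, a \emph{closed} subgroup of the compact Hausdorff group $G\times G$, via the subgroup theorem of Gevorgyan. For the inequality the paper does not reprove Schwarz's separation argument at all: it upgrades complete normality of $B_1\times B_2$ to $(G\times G)$-complete normality via \Cref{lemma: completely normal G-space is G-completely normal}, quotes the product inequality for $\secat_{G\times G}$ from \cite[Proposition 2.9]{A-D-S}, and concludes with the subgroup inequality \Cref{prop: subgroup-ineq for equi-secat}. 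You instead try to run the separation argument from scratch for the diagonal action, and that is where the problem lies.

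The consolidation step as you describe it would fail. If $\{C_{ij}\}$ is a closed shrinking of the product cover, say $C_{ij}=F_i\times G_j$ with $F_i\subseteq U_i$ and $G_j\subseteq V_j$ closed, then for fixed $k$ the sets $C_{ij}$ with $i+j=k+1$ are generally \emph{not} pairwise disjoint: for instance $C_{12}\cap C_{21}=(F_1\cap F_2)\times(G_1\cap G_2)$, and $F_1\cap F_2$ is typically nonempty since an overlapping open cover admits no disjoint closed shrinking. Sets with nonempty intersection cannot be enclosed in disjoint open sets, so no form of ($G$-)complete normality produces your $W_{ij}^{k}$. The classical argument separates instead the ``staircase'' sets $e_i\times d_j$, where $e_i=F_i\setminus\bigcup_{l<i}F_l$ and $d_j=G_j\setminus\bigcup_{l<j}G_l$: these are $G$-invariant, still cover $B_1\times B_2$, satisfy $e_i\times d_j\subseteq U_i\times V_j$, and for $i+j=i'+j'$ with $i<i'$ (hence $j>j'$) one has $\overline{e_i\times d_j}\cap(e_{i'}\times d_{j'})=\emptyset=(e_i\times d_j)\cap\overline{e_{i'}\times d_{j'}}$, because $e_{i'}\cap F_i=\emptyset$ and $d_j\cap G_{j'}=\emptyset$. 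These sets are separated but not closed, which is precisely why \emph{complete} normality rather than mere normality is the hypothesis; your version never exploits that distinction, which signals that the reduction is off. With the staircase sets in place of the $C_{ij}$, the rest of your assembly of piecewise $G$-equivariant sections goes through.
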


\begin{proof}
Identifying $G$ with the diagonal subgroup of $G\times G$, we see that it is a closed subgroup of $G\times G$.
Hence, by \cite[Theorem 3]{gevorgyan2023equivariant}, it follows that $p_1 \times p_2$ is a $G$-fibration.
Furthermore, by \cite[Lemma 3.12]{colmangranteqtc}, it follows that $B_1 \times B_2$ is $(G \times G)$-completely normal. 
Hence, the desired inequality 
$$
\secat_{G}(p_1 \times p_2) 
         \leq \secat_{G\times G}(p_1 \times p_2) 
         \leq \secat_G(p_1) + \secat_G(p_2) - 1
$$
follows from \Cref{prop: fixed-point-and-subgroup-ineq for equi-secat} (2) and \cite[Proposition 3.7]{A-D-S}.
\end{proof}

\begin{corollary}
\label{cor: equi-secat of pullback fibration under diagonal map}
    Suppose $p_i \colon E_i \to B$ is a $G$-fibration for $i =1,2$. 
    Let $E_1 \times_B E_2 = \{(e_1,e_2) \in E_1 \times E_2 \mid p_1(e_1) = p_2(e_2)\}$ and let $p\colon  E_1 \times_B E_2 \to B$ be the $G$-map given by $p(e_1,e_2)=p_1(e_1)=p_2(e_2)$, where $G$ acts on $E_1 \times_B E_2$ diagonally. 
    If $G$ is compact Hausdorff, then $p$ is a $G$-fibration. 
Furthermore, if 
$B \times B$ is completely normal, then
    \[
        \secat_G(p) \leq \secat_G(p_1) + \secat_G(p_2) -1.
    \]
\end{corollary}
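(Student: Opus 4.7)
The approach is to realize $p$ as a pullback and then chain together the stability of equivariant sectional category under pullback with the product inequality already established in \Cref{prop: special-prod-ineq for equi-secat}. Let $\Delta\colon B\to B\times B$ denote the diagonal, which is $G$-equivariant when $B\times B$ carries the diagonal $G$-action. Then $p$ fits into the pullback square
\[
\begin{tikzcd}
E_1\times_B E_2 \arrow[r] \arrow[d, "p"'] & E_1\times E_2 \arrow[d, "p_1\times p_2"] \\
B \arrow[r, "\Delta"'] & B\times B
\end{tikzcd}
\]
in which the top arrow is the natural inclusion. By \Cref{prop: special-prod-ineq for equi-secat}, the compactness of $G$ ensures that $p_1\times p_2$ is a $G$-fibration, and since $G$-fibrations are stable under pullback along $G$-maps, $p$ is a $G$-fibration as well.

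Next, I would record the general principle that pulling a $G$-fibration $q\colon F\to Y$ back along a $G$-map $f\colon Y'\to Y$ cannot increase the equivariant sectional category. Indeed, given a $G$-invariant open cover $\{V_j\}_{j=1}^{m}$ of $Y$ with $G$-equivariant sections $\sigma_j$ of $q$ (which may be taken strict, since $G$-homotopy sections of a $G$-fibration can be rectified via the $G$-homotopy lifting property), the preimages $U_j := f^{-1}(V_j)$ form a $G$-invariant open cover of $Y'$, and $s_j(y') := \bigl(y',\sigma_j(f(y'))\bigr)$ defines a $G$-equivariant strict section of the pullback fibration over $U_j$. Specialized to our square, this yields $\secat_G(p) \leq \secat_G(p_1\times p_2)$ by pulling $G$-invariant sectional covers of $B\times B$ back along $\Delta$.

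Finally, combining this with the product inequality from \Cref{prop: special-prod-ineq for equi-secat}, whose hypotheses $B$ Hausdorff and $B\times B$ completely normal are exactly what we have assumed, we obtain
\[
\secat_G(p) \;\leq\; \secat_G(p_1\times p_2) \;\leq\; \secat_G(p_1) + \secat_G(p_2) - 1,
\]
as required. There is no real obstacle here: once $p$ is identified as the pullback of $p_1\times p_2$ along $\Delta$, the corollary is a formal consequence of the preceding proposition together with the standard pullback-monotonicity of equivariant sectional category.
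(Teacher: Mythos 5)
Your proposal is correct and follows essentially the same route as the paper: identify $p$ as the pullback of the product $G$-fibration $p_1\times p_2$ along the diagonal $\Delta\colon B\to B\times B$, invoke \Cref{prop: special-prod-ineq for equi-secat} for both the fibration property and the product inequality, and use pullback-monotonicity of $\secat_G$ (which the paper cites from Colman--Grant rather than reproving, as you do). No gaps.
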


\begin{proof}
Note that the following diagram
\begin{equation}
\label{diag: pullback under diagonal of product fibration}
\begin{tikzcd}
    E_1 \times_B E_2 \arrow[r, hook] \arrow[d, "p"'] & E_1 \times E_2 \arrow[d, "p_1 \times p_2"] \\
    B \arrow[r, "\Delta"]                               & B \times B                 
\end{tikzcd}   
\end{equation}
is a pullback in the category of $G$-spaces, where $\Delta \colon B \to B \times B$ is the diagonal map. 
    In \Cref{prop: special-prod-ineq for equi-secat}, we showed that $p_1 \times p_2$ is a $G$-fibration if $G$ is compact Hausdorff. Hence, $p$ is a $G$-fibration.
    Thus, the desired inequality 
    \begin{align*}
        \secat_{G}(p)  
             \leq \secat_{G}(p_1 \times p_2) 
             \leq \secat_G(p_1) + \secat_G(p_2) - 1
    \end{align*}
    follows from \cite[Proposition 4.3]{colmangranteqtc} and \Cref{prop: special-prod-ineq for equi-secat}.
\end{proof}

\subsection{Equivariant LS-category}
\hfill\\ \vspace{-0.7em}
\label{subsec:eq-LS-category}

The notion of Lusternik-Schnirelmann (LS) category was introduced by Lusternik and Schnirelmann in \cite{LScat}.
In this section, we recall the corresponding equivariant analogue. 

\begin{definition}
A $G$-invariant subset $U$ of a $G$-space $X$ is said to be \emph{$G$-categorical} if the inclusion map $i_U \colon U \hookrightarrow X$ is $G$-homotopy equivalent to a map which takes values in a single orbit.
\end{definition}

\begin{definition}[\cite{Fadelleqcat}]
The \emph{equivariant LS-category} of a $G$-space $X$, denoted by $\ct_G(X)$, is the least positive integer $k$ such that there exists a $G$-categorical open cover $\{U_1,\dots,U_k\}$ of $X$.
\end{definition}

\begin{definition}
A $G$-space $X$ is said to be \emph{$G$-connected} if $X^H$ is path-connected for every closed subgroup $H$ of $G$.
\end{definition}

Let $X$ be a $G$-space, and $x_0 \in X$. Define the path space of $(X,x_0)$ as
$$
    P_{x_0}X = \{\alpha \colon I \to X \mid \alpha(0) = x_0\}.
$$
Then the map $e_X \colon P_{x_0}X \to X$, given by $e_X(\alpha)=\alpha(1)$, is a fibration. 
Moreover, if the point $x_0$ is fixed under the $G$-action, then $e_X$ is a $G$-fibration, where $P_{x_0}X$ admits a $G$-action via $(g\cdot \alpha)(t) := g\cdot \alpha(t)$.
We note that the fibre of $e_X$ is the based loop space $\Omega X = (e_X)^{-1}(x_0)$ of $X$, and the $G$-action on $P_{x_0}X$ restricts to a $G$-action on $\Omega X$. 
Furthermore, we have a commutative diagram of $G$-maps
\begin{equation}
\label{diag: G-cat comm-diag}
\begin{tikzcd}
\{x_0\} \arrow[rr, "h"] \arrow[rd, "i"', hook] &   & P_{x_0}X \arrow[ld, "e_X"] \\
                                               & X, &                           
\end{tikzcd}
\end{equation}
where $h$ is a $G$-homotopy equivalence and $i \colon \{x_0\} \hookrightarrow X$ is the inclusion map.

\begin{lemma}[{\cite[Corollary 4.7]{colmangranteqtc}}] 
\label{lemma: catG(X) = secatG(e_X)}
Suppose $G$ is a compact Hausdorff topological group. 
If $X$ is a $G$-space such that $X$ is $G$-connected and $x_0 \in X^G$, then
    $
        \ct_G(X) = \secat_G(e_X).
    $
\end{lemma}

We now present inequalities relating $\ct_G(X)$ to the non-equivariant category of fixed point sets and to the equivariant category of $X$ viewed as a $K$-space, for each closed subgroup $K$ of $G$.

\begin{proposition}
\label{prop: subgroup-ineq for equi-cat}
    Let $G$ be a compact Hausdorff topological group, and let $X$ be a $G$-connected space with $X^G \neq \emptyset$. 
    If $H$ and $K$ are closed subgroups of $G$ such that $X^H$ is $K$-invariant and $HK' = K'H$ for all closed subgroups $K'$ of $K$, then 
    $$
        \ct_K(X^H) \leq \ct_G(X).
    $$
\end{proposition}

\begin{proof}
    Suppose $K'$ is a closed subgroup of $K$. 
    Then $HK' = K'H$ implies $HK'$ is a subgroup of $G$ and $\langle H, K' \rangle = HK'$, where $\langle H, K' \rangle$ is the subgroup generated by $H$ and $K'$.
    We note that $HK'$ is closed, as it is the image of the compact space $H \times K'$ under the group operation $G \times G \to G$ into the Hausdorff space $G$.
    Hence, $X^H$ is $K$-connected, since
    $$
        (X^H)^{K'} 
            = X^H \cap X^{K'} 
            = X^{\langle H, K' \rangle}
            = X^{HK'},
    $$
    and $X$ is $G$-connected. 
    Moreover, if $x_0 \in X^G$, then $x_0 \in (X^H)^K$. 
    Hence, by \Cref{lemma: catG(X) = secatG(e_X)}, it is enough to show that $\secat_K(e_{X^H}) \leq \secat_G(e_X)$.

    Suppose $U$ is a $G$-invariant open subset of $X$ and $s \colon U \to P_{x_0}X$ is $G$-equivariant section of $e_X$.
    Set $V := U \cap X^H$. 
    Then $V$ is a $K$-invariant open subset of $X^H$.
    As $s$ is $G$-equivariant, it restricts to a $K$-equivariant map $\left.s\right|_{V} \colon V \to (P_{x_0}X)^H = P_{x_0}(X^H)$.
    Clearly, $\left.s\right|_{V}$ is a $K$-equivariant section of $e_{X^H} \colon P_{x_0}(X^H) \to X^H$.
\end{proof} 

\begin{corollary}
\label{cor: fixed-point-and-subgroup-ineq for equi-cat}
    Let $G$ be a compact Hausdorff topological group, and let $X$ be a $G$-connected space with $X^G \neq \emptyset$. Then
    \begin{enumerate}
        \item $\ct(X^H) \leq \ct_G(X)$ for all closed subgroups $H$ of $G$.
        \item $\ct_K(X) \leq \ct_G(X)$ for all closed subgroups $K$ of $G$.
    \end{enumerate}
\end{corollary}

Now, as a consequence of \Cref{thm: G-homo-dim ub on equi-secat}, we obtain an equivariant homotopy dimension-connectivity upper bound for equivariant LS category.

\begin{theorem}
\label{thm: G-hom-dim on equi-cat}
Suppose $G$ is a compact Lie group.
Suppose $X$ is a $G$-$\mathrm{CW}$-complex of dimension at least $2$ such that $X^G \neq \emptyset$.
If there exists $s \geq 0$ such that $X^H$ is $s$-connected for all subgroups $H$ of $G$, then
$$
    \ct_G(X) < \frac{\mathrm{hdim}_G(X)+1}{s+1}+1.
$$
\end{theorem}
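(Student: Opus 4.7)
The plan is to apply \Cref{thm: G-homo-dim ub on equi-secat} to the based path fibration, combined with \Cref{lemma: catG(X) = secatG(X)}. First I would choose a basepoint $x_0 \in X^G$, which exists by hypothesis, and note that since $X^H$ is $s$-connected, in particular non-empty and path-connected, for every closed subgroup $H \leq G$, the $G$-space $X$ is $G$-connected. This lets me invoke \Cref{lemma: catG(X) = secatG(X)} to obtain
$$
    \ct_G(X) = \secat_G(e_X),
$$
reducing the problem to bounding the equivariant sectional category of the based path fibration $e_X \colon P_{x_0}X \to X$.

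Next I would verify the hypotheses of \Cref{thm: G-homo-dim ub on equi-secat} for $e_X$. The base $X$ is a $G$-CW complex of dimension at least $2$ by assumption, and $e_X$ is a $G$-fibration, as already noted in the excerpt. Its fibre over $x_0$ is the based loop space $\Omega X = \Omega(X,x_0)$; because the basepoint is $G$-fixed (so in particular $H$-fixed for every $H \leq G$), taking $H$-fixed points commutes with looping, giving $(\Omega X)^H = \Omega(X^H)$. Since $X^H$ is $s$-connected, $\Omega(X^H)$ is $(s-1)$-connected, which matches exactly the fibre hypothesis of \Cref{thm: G-homo-dim ub on equi-secat}.

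Applying that theorem to $e_X$ then yields
$$
    \secat_G(e_X) < \frac{\mathrm{hdim}_G(X)+1}{s+1}+1,
$$
and combining with the identification $\ct_G(X) = \secat_G(e_X)$ gives the conclusion. I do not expect any genuine obstacle here; the only subtlety worth emphasising is the one-unit shift in connectivity coming from looping, which is precisely what makes the fibre $(s-1)$-connected and hence produces the same denominator $s+1$ in the final bound as appears in \Cref{thm: G-homo-dim ub on equi-secat}.
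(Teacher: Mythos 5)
Your proposal is correct and follows essentially the same route as the paper: apply \Cref{thm: G-homo-dim ub on equi-secat} to the based path fibration $e_X$ with fibre $\Omega X$, using $(\Omega X)^H = \Omega(X^H)$ and the resulting $(s-1)$-connectivity, then conclude via $\ct_G(X) = \secat_G(e_X)$ from \Cref{lemma: catG(X) = secatG(X)} after observing that $X$ is $G$-connected.
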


\begin{proof}
    If $x_0 \in X^G$, then $e_X \colon P_{x_0}X \to X$ is a $G$-fibration with fibre $\Omega X$, which also admits a $G$-action. 
    Note that $(\Omega X)^H = \Omega(X^H)$. 
    Since $X^H$ is $s$-connected, the loop space $\Omega (X^H)$ is $(s-1)$-connected.
    Hence, by \Cref{thm: G-homo-dim ub on equi-secat}, we get
    $$
        \secat_{G}(e_X) < \frac{\mathrm{hdim}_G(X)+1}{s+1}+1.
    $$

    As $X^H$ is $s$-connected, it follows that $X^H$ is path-connected. 
    Hence, $X$ is $G$-connected, and the theorem follows by \Cref{lemma: catG(X) = secatG(e_X)}.
\end{proof}

\subsection{Equivariant and invariant topological complexity} 
\hfill\\ \vspace{-0.7em}
\label{subsec:invtc}

We recall the concept of equivariant topological complexity introduced by Colman and Grant in \cite{colmangranteqtc}.
Let $X$ be a $G$-space. 
Observe that the free path space $PX$ admits a $G$-action via $(g\cdot \alpha)(t):=g\cdot \alpha(t)$. 
Similarly, the product space $X^k$ is a $G$-space with the diagonal action. 
The fibration
$$
e_{k,X} \colon PX \to X^k, \quad \alpha \mapsto \left(\alpha(0), \alpha\left(\frac{1}{k-1}\right), \ldots, \alpha\left(\frac{i}{k-1}\right), \ldots, \alpha\left(\frac{k-2}{k-1}\right), \alpha(1)\right)
$$
is a $G$-fibration.

\begin{definition}[{\cite{BaySarkarheqtc}}]
The sequential equivariant topological complexity of a $G$-space $X$ is defined as 
    $$
        \TC_{k,G}(X) := \secat_G(e_{k,X}).
    $$
In particular, when $k=2$, we will denote $e_{2,X}$ by $\pi$ and $\TC_{2,G}(X)$ by $\TC_G(X)$.
\end{definition}

In \cite[Proposition 3.40]{A-D-S}, Sarkar and the authors of this paper provided a dimension-connectivity upper bound on the sequential equivariant topological complexity. 
We improve their result by establishing an equivariant homotopy dimension-connectivity upper bound. 
We omit the proof, as it is similar to the original and follows from the homotopy dimension-connectivity upper bound on the equivariant sectional category in \Cref{thm: G-homo-dim ub on equi-secat}.

\begin{theorem}
\label{thm: G-hom-dim ub for equi-tc}
Suppose $G$ is a compact Lie group.
If $X$ is a $G$-$\mathrm{CW}$-complex of dimension at least 1 such that $X^H$ is $s$-connected for all subgroups $H \leq G$, then
$$
    \TC_{k,G}(X) < \frac{k\ \mathrm{hdim}_G(X)+1}{s+1}+1.
$$
\end{theorem}

\begin{example}
Consider $G = \Z_2$ acting on $X = S^n$ with $n \geq 1$, by reflection, given by multiplication by $-1$ in the last coordinate.
Then $X^G = S^{n-1}$ and $X^{\{e\}} = S^n$ are $(n-2)$-connected and $(n-1)$-connected, respectively. 
Then, by \Cref{thm: G-hom-dim on equi-cat}, it follows that
$$
    \ct_G(S^n) < \frac{n+1}{n-1} + 1, 
        \quad \text{for }n \geq 2,
$$
which implies $\ct_G(S^n) \leq 2$ for $n \geq 3$.  
Hence, $\ct_G(S^n) = 2$ for $n \geq 3$, since $S^n$ is clearly not $G$-contractible.
Moreover, by \Cref{thm: G-hom-dim ub for equi-tc}, it follows that
$$
    \TC_{k,G}(S^n) < \frac{k\cdot n+1}{n-1} + 1,
        \quad \text{for }n \geq 2
$$
which implies $\TC_{k,G}(S^n) \leq k+1$ for $k \leq n-2$. 
Hence, it follows that $\TC_{k,G}(S^n) = k+1$ for $k \leq n-2$, since 
$$
    \TC_k(S^{n-1}) \leq \TC_{k,G}(S^n) 
        \quad \text{and} \quad
    \TC_k(S^n) \leq \TC_{k,G}(S^n)
$$
by \cite[Proposition 3.14 (2)]{BaySarkarheqtc}, and 
$$
    \TC_k(S^n) = 
        \begin{cases}
            k, & \text{if } k \text{ is odd},\\
            k+1, & \text{if } k \text{ is even},
        \end{cases}
$$
by \cite[Section 4]{RUD2010}.

We note that this computation can be carried out more directly and efficiently as follows.
Observe that $\ct_G(S^n) = 2$ for all $n \geq 2$, see \cite[Example 5.9]{colmangranteqtc}.
Hence, it follows from \cite[Proposition 3.17]{BaySarkarheqtc} that $\TC_{k,G}(S^n) \leq k+1$ for all $n \geq 2$ and all $k \geq 2$. 
Moreover, $\TC_{k,G}(S^1) = \infty$, since $\TC_k(S^0) \leq \TC_{k,G}(S^1)$.
Hence,
$$
    \TC_{k,G}(S^n) = 
        \begin{cases}
            \infty, & \text{if } n=1,\\
            k+1, & \text{if } n \geq 2.
        \end{cases}
$$
\end{example}

It is important to note that the equivariant topological complexity of $G$-spaces does not necessarily relate to the topological complexity of their orbit spaces. 
However, Lubawski and Marzantowicz \cite{invarianttc} introduced a new notion of topological complexity for $G$-spaces, designed to facilitate such a comparison.
We now present their definition and recall the corresponding result.

Suppose $X$ is a $G$-space. 
Let $\pi_X \colon X \to X/G$ denote the orbit map.
Define
$$
PX \times_{X/G} PX 
    := \{ (\gamma,\delta) \in PX \times PX \mid G\cdot \gamma(1) = G \cdot \delta(0)\}
$$
Then the following diagram
\[
\begin{tikzcd}
PX \times_{X/G} PX  \arrow[d, "\pi_1"'] \arrow[r, "\pi_2"] & PX \arrow[d, "\pi_X \circ e_0"] \\
PX \arrow[r, "\pi_X \circ e_1"']                        & X/G                                         
\end{tikzcd}
\]
is a pullback.
Define the map
\begin{equation}
\label{eq: inv-tc}
    \p \colon  PX \times_{X/G} PX \to X \times X, \quad (\gamma,\delta) \mapsto (\gamma(0),\delta(1)).
\end{equation}
It was shown in \cite[Proposition 3.7]{invarianttc} that the map $\p$ is a $(G \times G)$-fibration.

\begin{definition}
 Let $X$ be a $G$-space. The invariant topological complexity of $X$ denoted by $\TC^G(X)$, is defined as 
    \[
        \TC^G(X):=\secat_{G\times G}(\p).
    \]   
\end{definition}

The following theorem relates the invariant topological complexity of a free $G$-space $X$ with that of the topological complexity of its corresponding orbit space.

\begin{theorem}[{\cite[Theorem 3.9 and 3.10]{invarianttc}}]
\label{thm: invariance theorem for TC}
Let $G$ be a compact Lie group and $X$ be a compact $G$-ANR. 
Then 
$$
    \TC(X/G)\leq \TC^G(X).
$$ 
Moreover, if $X$ has only one orbit type, then
$$
    \TC^G(X)=\TC(X/G).
$$  
\end{theorem}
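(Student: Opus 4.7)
The plan is to prove both assertions by analyzing how $(G\times G)$-equivariant sections of $\p\colon PX\times_{X/G}PX\to X\times X$ interact with the orbit projection $q\colon X\times X\to(X/G)\times(X/G)$, which is an open surjection because $G$ is compact. I would first note that $G$ acts trivially on $X/G$, so $\TC^G(X/G)=\TC(X/G)$, and the ``moreover'' assertion is equivalent to combining the first inequality with its converse under the one-orbit-type hypothesis.

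For $\TC(X/G)\le \TC^G(X)$, suppose the right-hand side equals $k$, realised by a $(G\times G)$-invariant open cover $\{U_1,\dots,U_k\}$ of $X\times X$ together with equivariant sections $s_i\colon U_i\to PX\times_{X/G}PX$ of $\p$. Set $\bar U_i:=q(U_i)$; these are open, cover $(X/G)\times(X/G)$, and I would construct sections $\bar s_i\colon \bar U_i\to P(X/G)$ of the endpoint fibration $\pi_{X/G}$ as follows. For $(\bar x,\bar y)\in\bar U_i$, pick any preimage $(x,y)\in U_i$, write $s_i(x,y)=(\gamma,\delta)$, and define $\bar s_i(\bar x,\bar y)$ to be the concatenation of $\pi_X\circ\gamma$ with $\pi_X\circ\delta$ in $P(X/G)$. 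The fibre-product defining condition $G\cdot\gamma(1)=G\cdot\delta(0)$ ensures the concatenation is well-defined, while $(G\times G)$-equivariance of $s_i$ shows that $\bar s_i$ does not depend on the lift chosen.

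For the reverse inequality $\TC^G(X)\le\TC(X/G)$ under the one-orbit-type assumption, I would exploit the Gleason--Palais slice theorem: together with the compact $G$-ANR hypothesis, one orbit type implies $\pi_X\colon X\to X/G$ is a locally trivial fibre bundle, in fact carrying a principal $N(H)/H$-bundle structure on the underlying stratum. Starting from an open cover $\{\bar V_1,\dots,\bar V_m\}$ of $(X/G)^{\times 2}$ with sections $\bar\sigma_i$ of $\pi_{X/G}$, I would pull back to the $(G\times G)$-invariant open cover $\{V_i:=q^{-1}(\bar V_i)\}$ of $X\times X$ and construct a section $\sigma_i\colon V_i\to PX\times_{X/G}PX$ by equivariantly lifting $\bar\sigma_i(\bar x,\bar y)$ to a path $\gamma\in PX$ starting at $x$ and its reverse to a path $\delta\in PX$ ending at $y$, via the homotopy lifting property for $\pi_X$. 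The pair $(\gamma,\delta)$ then lies in $PX\times_{X/G}PX$ because $\gamma(1)$ and $\delta(0)$ project to the same orbit.

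The main obstacle is producing the lift $\sigma_i$ continuously on $V_i$ and $(G\times G)$-equivariantly at the same time. For a fixed starting point the homotopy lifting is a standard consequence of the bundle structure on $\pi_X$, but a coherent choice of lift across all starting points that commutes with the $G\times G$-action requires the principal-bundle structure afforded by the single-orbit-type assumption and the existence of equivariant slices coming from the $G$-ANR hypothesis. I expect this gluing, performed via the equivariant covering homotopy theorem for $G$-ANRs, to be the technical heart of the proof; the first half is by contrast a direct descent argument using only openness of $q$ and the saturated fibre-product condition.
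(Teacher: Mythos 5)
The paper does not actually prove this theorem; it is quoted from Lubawski--Marzantowicz \cite{invarianttc}. The closest the paper comes is its own proof of the parametrized analogue (the proposition in \Cref{subsec:invariance-thm} together with \Cref{thm: inv-para-tc under free action}), and your proposal tracks that template closely. Your first half is correct and essentially identical to that argument: push a $(G\times G)$-invariant cover down through the open orbit map $q$, and note that equivariance of $s_i$ plus the saturation of $U_i$ makes the concatenated projected path independent of the chosen lift, hence descends continuously to the quotient. Nothing is missing there beyond the routine check that $q|_{U_i}$ is a quotient map onto $\bar U_i$.

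In the second half you have named the right tool (Palais's equivariant Covering Homotopy Theorem, \cite[Theorem II.7.3]{bredon-transformation-groups}), but the argument as phrased does not yet reduce to a statement that theorem covers: lifting a $\bar V_i$-parametrized family of paths ``equivariantly and continuously, path by path'' is not literally a covering-homotopy problem, and trying to organize it as one starting from the section $\bar\sigma_i$ is exactly the fiddly gluing you flag as the obstacle. The missing ingredient is the compressibility characterization of sections of $\p$ (\cite[Lemma 3.8]{invarianttc}, recalled in \Cref{subsec:Clapp-Puppe}; its parametrized analogue is \Cref{thm: equivalent defn of a section of in-para-tc}): a $(G\times G)$-section of $\p$ over $V_i$ exists iff $V_i$ is $(G\times G)$-compressible into $\mathbb{k}(X)=q^{-1}(\Delta(X/G))$. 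With that in hand, one converts $\bar\sigma_i$ into a single homotopy $\overline H\colon \bar V_i\times I\to (X/G)\times(X/G)$ from the inclusion into $\Delta(X/G)$, and lifts $\overline H$ in one application of the covering homotopy theorem, using that $X\times X$ with the $(G\times G)$-action again has a single orbit type ($H\times H$) and that $(X\times X)/(G\times G)=(X/G)\times(X/G)$; the lift $H_1$ lands in $q^{-1}(\Delta(X/G))=\mathbb{k}(X)$, which is what compressibility requires. This is how both the cited source and the paper's \Cref{thm: inv-para-tc under free action} proceed. So: first half complete; second half has the correct strategy and hypotheses identified, but needs the section-versus-compression lemma to make the lifting step actually go through.
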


\subsection{Clapp-Puppe invariant of Lusternik-Schnirelmann type} 
\hfill\\ \vspace{-0.7em}
\label{subsec:Clapp-Puppe}

In this section, we recall the equivariant version of Clapp-Puppe invariant \cite{clapp-puppe-ls-category}, introduced by Lubawski and Marzantowicz in \cite{invarianttc}.

\begin{definition}\label{def:G-compress}
    Let $A$ be a $G$-invariant closed subset of a $G$-space $X$. 
    A $G$-invariant open subset of $X$ is said to be $G$-compressible into $A$ if the inclusion map $i_{U} \colon U \rightarrow X$ is $G$-homotopic to a $G$-map $c \colon U \to X$ which takes values in $A$.
\end{definition}

\begin{definition}
    Let $A$ be a $G$-invariant closed subset of a $G$-space $X$. 
    The $A$-Lusternik-Schnirelmann $G$-category of $X$, denoted $~_{A}\ct_G(X)$, is the least positive integer $k$ such that there exists a $G$-invariant open cover $\{U_1,\dots,U_k\}$ of $X$ such that each $U_i$ is $G$-compressible into $A$.
\end{definition}

    Colman and Grant in \cite[Lemma 5.14]{colmangranteqtc} showed that for a $G$-invariant open subset $U$ of $X \times X$ the following are equivalent:
    \begin{enumerate}
        \item there exists a $G$-equivariant section of $e_X \colon PX \to X \times X$ over $U$,
        \item $U$ is $G$-compressible into the diagonal $\Delta(X) \subset X \times X$. 
    \end{enumerate}
    In particular,
    $$
        \TC_G(X) = ~_{\Delta(X)}\ct_G(X \times X).
    $$
    Later, Lubawski and Marzantowicz in \cite[Lemma 3.8]{invarianttc} showed a similar result for invariant topological complexity. More precisely, for a $(G \times G)$-invariant open subset of $U$ of $X \times X$ the following are equivalent:
    \begin{enumerate}
        \item there exists a $(G \times G)$-equivariant section of $\mathfrak{p} \colon PX \times_{X/G} X \to X \times X$ over $U$,
        \item $U$ is $(G \times G)$-compressible into the saturation of the diagonal $\mathbb{k}(X) := (G \times G)\cdot \Delta(X) \subset X \times X$.
    \end{enumerate}
    In particular,
    $$
        \TC^G(X) = ~_{\mathbb{k}(X)}\ct_{G \times G}(X \times X).
    $$
    In \Cref{sec: equi-para-tc} and \Cref{sec: inv-para-tc}, we give analogous results for equivariant parametrized topological complexity and invariant parametrized topological complexity, respectively. We use these results to prove \Cref{thm: inv-para-tc under free action}.

\section{Equivariant parametrized topological complexity}
\label{sec: equi-para-tc}


For a $G$-fibration $p \colon E\to B$, consider the subspace $E^I_B$ of the free path space $E^I$ of $E$ defined by
\[
    E^I_B := \{\gamma\in E^I \mid \gamma(t) \in p^{-1}(b) ~\text{for some}~ b\in B ~\text{and for all}~ t\in[0,1] \}.
\]
Consider the pullback corresponding to the fibration $p \colon E\to B$ defined by 
\[
    E \times_B E = \{(e_1,e_2) \in E \times E \mid p(e_1)=p(e_2) \}.
\]
It is clear that the $G$-action on $E^I$ given by
$$
    (g \cdot \gamma)(t) := g \cdot \gamma(t) \quad \text{for all } g \in G, \gamma \in E^I, t \in I;
$$
and the diagonal action of $G$ on $E \times E$ restricts to $E^I_B$ and $E \times_B E$, respectively. Then the map
\begin{equation}
\label{eq: para-tc}
    \Pi \colon E^I_B \to E\times_B E, \quad \Pi(\gamma) = (\gamma(0),\gamma(1))
\end{equation}
is a $G$-fibration, see \cite[Corollary 4.3]{D-EqPTC}.

\begin{definition}[{\cite[Definition 4.1]{D-EqPTC}}]
The equivariant parametrized topological complexity of a $G$-fibration $p\colon E\to B$, denoted by $\TC_G[p\colon E\to B]$, is defined as 
\[
    \TC_G[p\colon E \to B] := \secat_G\left( \Pi \right).
\]    
\end{definition}

Suppose $\Delta \colon E \to E \times E$ is the diagonal map.  Then it is clear that the image $\Delta(E)$ is a $G$-invariant subset of $E \times_B E$. In the next theorem, we prove the  parametrized  analogue of \cite[Lemma 5.14]{colmangranteqtc} in the equivariant setting.

\begin{theorem}
\label{thm: equivalent defn of a section of equi-para-tc}
Let $p\colon E\to B$ be a $G$-fibration. 
For a $G$-invariant (not necessarily open) subset $U$ of $E \times_B E$ the following are equivalent:
\begin{enumerate}
\item there exists a $G$-equivariant section of $\Pi \colon E^I_B\to E\times_B E$ over $U$.
\item there exists a $G$-homotopy between the inclusion map $i_U\colon U \hookrightarrow E \times_B E$ and a $G$-map $f\colon U \to E \times_B E$ which takes values in $\Delta(E)$.
\end{enumerate}
\end{theorem}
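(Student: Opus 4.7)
The plan is to prove $(1) \Rightarrow (2)$ by an explicit construction and $(2) \Rightarrow (1)$ by invoking the $G$-homotopy lifting property of the $G$-fibration $\Pi$.

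For $(1) \Rightarrow (2)$, suppose $s\colon U \to E^I_B$ is a $G$-equivariant section of $\Pi$ over $U$. I would define
\[
    H \colon U \times I \to E\times_B E, \qquad H(u,t) := (s(u)(t), s(u)(1)).
\]
Since $s(u) \in E^I_B$, the whole path $s(u)$ lies in a single fibre of $p$, so $p(s(u)(t)) = p(s(u)(1))$ and $H$ indeed lands in $E \times_B E$. Writing $u=(u_1,u_2)$, the section property $\Pi(s(u))=u$ yields $H(u,0)=(u_1,u_2)=i_U(u)$ and $H(u,1)=(u_2,u_2) \in \Delta(E)$, while $G$-equivariance of $s$ together with the pointwise $G$-action on $E^I_B$ makes $H$ a $G$-map.

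For $(2) \Rightarrow (1)$, suppose $H\colon U \times I \to E\times_B E$ is a $G$-homotopy with $H(u,0)=i_U(u)$ and $H(u,1)=f(u) \in \Delta(E)$. Write $f(u)=(c(u),c(u))$ for a $G$-map $c\colon U \to E$. The constant-path assignment $\tilde f(u)(t) := c(u)$ defines a $G$-map $\tilde f\colon U \to E^I_B$ (constant paths trivially lie in a single fibre) satisfying $\Pi \circ \tilde f = f$. Reverse the homotopy by setting $H'(u,t):=H(u,1-t)$, so that $H'(u,0)=\Pi(\tilde f(u))$ and $H'(u,1)=i_U(u)$. Since $\Pi$ is a $G$-fibration (by Corollary 4.3 of \cite{D-EqPTC}), the $G$-homotopy lifting property produces a $G$-homotopy $\widetilde{H}'\colon U\times I \to E^I_B$ with $\widetilde{H}'(u,0)=\tilde f(u)$ and $\Pi \circ \widetilde{H}'=H'$. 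Setting $s(u):=\widetilde{H}'(u,1)$ gives a $G$-equivariant map $s\colon U \to E^I_B$ with $\Pi(s(u))=H'(u,1)=u$, which is the desired section.

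The main subtlety, compared to the unparametrized analogue \cite[Lemma 5.14]{colmangranteqtc}, is that in the parametrized setting a lift of the homotopy $H$ by concatenating $H_1(u,\cdot)$ with the reverse of $H_2(u,\cdot)$ does not work: although $p\circ H_1 = p \circ H_2$ at each time, neither coordinate stays inside a single fibre of $p$ as $t$ varies, so the resulting path is not in $E^I_B$. This is precisely why the implication $(2)\Rightarrow (1)$ is routed through the $G$-HLP of $\Pi$ rather than through an explicit path construction; the $G$-fibration status of $\Pi$ is what carries the geometric content of the proof.
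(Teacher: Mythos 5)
Your proposal is correct and follows essentially the same route as the paper: for $(1)\Rightarrow(2)$ a homotopy that slides one coordinate along the section path (you contract the first coordinate to the endpoint, the paper contracts the second coordinate to the start point via $\gamma\mapsto\gamma(s(1-t))$ — a trivial symmetric variant), and for $(2)\Rightarrow(1)$ the constant-path initial lift combined with the $G$-homotopy lifting property of $\Pi$, exactly as in the paper (the explicit reversal $H'$ just accounts for your orientation convention). No gaps.
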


\begin{proof}
$(1) \implies (2)$. 
Suppose $\sigma \colon U \to E^{I}_B$ is a $G$-equivariant section of $\Pi$. 
Let $H \colon E^{I}_B \times I \to E^{I}_B$ be given by
$$
    H(\gamma,t)(s) 
        = \gamma(s(1-t)), 
            \quad \text{for }
            \gamma \in E^{I}_B \text{ and } s,t \in I.
$$
Clearly, $H$ is well-defined and $G$-equivariant, such that $H_0 = \mathrm{id}_{E^{I}_B}$ and $H(\gamma,1) = c_{\gamma(0)}$, where $c_{e}$ is the constant path in $E$ taking the value $e \in E$. Then
$$
    F := \Pi \circ H \circ (\sigma \times \mathrm{id}_I) \colon U \times I \to E \times_{B} E    
$$
is a $G$-homotopy such that $F_0 
= i_U$ and $F_1(u) 
= (\sigma(u)(0),\sigma(u)(0)) \in \Delta(E)$. 
Hence, $F_1$ is the desired map.
    
$(2) \implies (1)$. 
Suppose $H \colon U \times I \to E \times_{B} E$ is a $G$-homotopy between $f$ and $i_U$. 
Let $\sigma \colon U \to E^{I}_B$ be the $G$-map given by $\sigma(u) = c_{\pi_1(f(u))} = c_{\pi_2(f(u))}$, where $\pi_i \colon E \times_B E \to E$ is the projection map onto the $i$-th factor. 
Since $\Pi$ is a $G$-fibration, there exists a $G$-homotopy $\widetilde{H} \colon U \times I \to E^{I}_B$ such that $\widetilde{H}_0 = \sigma$ and $\Pi \circ \widetilde{H} = H$.
Then $\Pi \circ \widetilde{H}_1 = H_1 = i_U$ implies $\widetilde{H}_1$ is a $G$-equivariant section of $\Pi$ over $U$.  
\end{proof}

As a consequence of the previous theorem, we can now express the equivariant parametrized topological complexity as the equivariant $\Delta(E)$-LS category of the fibre product. 

\begin{corollary}
\label{cor: equivalent defn of eq-para-tc}
    For a $G$-fibration $p \colon E \to B$, we have
    \[
    \TC_{G}[p \colon E \to B] = ~_{\Delta(E)}\ct_G(E\times_B E).
    \]
\end{corollary}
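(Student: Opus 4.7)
The proof is a direct bookkeeping exercise once \Cref{thm: equivalent defn of a section of equi-para-tc} is in hand, so the plan is to spell out the equivalence between the two covers.

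The plan is to unwind both sides to the same combinatorial data, namely the minimum cardinality of a $G$-invariant open cover $\{U_1,\dots,U_k\}$ of $E\times_B E$ whose members satisfy a local section/compressibility condition. By definition, $\TC_{G}[p\colon E\to B]=\secat_G(\Pi)$ is the least $k$ for which there exist $G$-invariant open sets $U_i$ covering $E\times_B E$, each admitting a $G$-map $s_i\colon U_i\to E^I_B$ with $\Pi\circ s_i\simeq_G i_{U_i}$. On the other hand, $_{\triangle(E)}\ct_G(E\times_B E)$ is the least $k$ for which there exist $G$-invariant open sets $U_i$ covering $E\times_B E$, each $G$-compressible into $\triangle(E)$ inside $E\times_B E$, i.e.\ with $i_{U_i}\colon U_i\hookrightarrow E\times_B E$ being $G$-homotopic to some $G$-map $U_i\to E\times_B E$ landing in $\triangle(E)$.

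First I would observe that, since $\Pi$ is a $G$-fibration (by \cite[Corollary 4.3]{D-EqPTC}), having a $G$-homotopy section of $\Pi$ over the $G$-invariant open set $U_i$ is equivalent to having an honest $G$-equivariant section of $\Pi$ over $U_i$. Indeed, a homotopy section $s_i$ with $\Pi\circ s_i\simeq_G i_{U_i}$ can be lifted along $\Pi$ using the $G$-equivariant homotopy lifting property to produce a strict $G$-equivariant section; conversely a strict section is a fortiori a homotopy section. Hence the covers computing $\secat_G(\Pi)$ can equivalently be described as $G$-invariant open covers each of whose members admits a $G$-equivariant section of $\Pi$.

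Now apply \Cref{thm: equivalent defn of a section of equi-para-tc}: for a $G$-invariant subset $U$ of $E\times_B E$, the existence of a $G$-equivariant section of $\Pi$ over $U$ is equivalent to $U$ being $G$-compressible into $\triangle(E)$. Applying this equivalence term-by-term to each member of a putative minimal cover gives a bijection between the $G$-invariant open covers witnessing $\secat_G(\Pi)\le k$ and those witnessing $_{\triangle(E)}\ct_G(E\times_B E)\le k$. Taking the minimum over $k$ yields the claimed equality.

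The only mildly delicate step is the passage from homotopy sections to strict sections in the opening observation, but this is standard for $G$-fibrations and requires no new argument beyond the equivariant homotopy lifting property already used in the proof of \Cref{thm: equivalent defn of a section of equi-para-tc}; the rest of the proof is formal.
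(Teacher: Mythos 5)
Your proof is correct and follows the same route as the paper, which states the corollary as an immediate consequence of \Cref{thm: equivalent defn of a section of equi-para-tc}. Your explicit treatment of the passage from $G$-homotopy sections (as in \Cref{def:eqsecat}) to strict $G$-equivariant sections via the equivariant homotopy lifting property is a detail the paper leaves implicit, and it is handled correctly.
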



\begin{proposition}
\label{prop: subgroup-ineq for equi-para-tc}
    Suppose $p \colon E \to B$ is a $G$-fibration. 
    If $H$ and $K$ are subgroups of $G$ such that $E^H$ and $B^H$ are $K$-invariant, and the fixed point map $p^H \colon E^H \to B^H$ is a $K$-fibration, then
    $$
        \TC_K[p^H \colon E^H \to B^H] \leq \TC_G[p \colon E \to B].
    $$
\end{proposition}

\begin{proof}
Suppose $\Pi \colon E^I_B \to E \times_B E$ is the $G$-equivariant parametrized fibration corresponding to $p$. 
Then it is easily checked that
    $$
        (E^I_B)^H = (E^H)^I_{B^H} \quad \text{and} \quad (E \times_B E)^H = E^H \times_{B^H} E^H,
    $$ 
and the $K$-equivariant parameterized fibration corresponding to $p^H$ is given by $\Pi^H$.
Hence, it follows that
    $$
        \TC_K[p^H \colon E^H \to B^H] 
            = \secat_K(\Pi^H)
            \leq \secat_G(\Pi)
            = \TC_G[p \colon E \to B]
    $$
by \Cref{prop: fixed-point-and-subgroup-ineq for equi-secat}.
\end{proof}

Applying \Cref{prop: subgroup-ineq for equi-para-tc} and \Cref{prop: fixed-point-and-subgroup-ineq for equi-secat}, we obtain the following corollary.

\begin{corollary}
\label{cor: fixed-point-and-subgroup-ineq for equi-para-tc}   
    Suppose $G$ is a compact Hausdorff topological group and $p \colon E \to B$ is a $G$-fibration. 
    Then
    \begin{enumerate}
        \item the fixed point map $p^H \colon E^H \to B^H$ is a fibration for all closed subgroups $H$ of $G$, and
        $$
            \TC[p^H \colon E^H \to B^H] \leq \TC_G[p \colon E \to B].
        $$
        \item $p \colon E \to B$ is a $K$-fibration for all closed subgroups $K$ of $G$, and 
        $$
            \TC[p \colon E \to B] \leq \TC_K[p \colon E \to B] \leq \TC_G[p \colon E \to B].
        $$
    \end{enumerate}
\end{corollary}


A cohomological lower bound for the equivariant parametrized topological complexity was established by the second author in \cite[Theorem 4.5]{D-EqPTC} using Borel cohomology. 
In the following theorem, we provide an alternative cohomological lower bound based on ordinary cohomology, which should, in principle, be easier to compute.
The proof follows arguments similar to those in \cite[Theorem 4.6]{grant2019symmetrized}.

Let $E_{B,G} := (E \times_B E)/G$ and let $d_GE \subseteq E_{B,G}$ denote the image of the diagonal subspace $\Delta(E) \subseteq E \times_B E$ under the orbit map $\rho \colon E \times_B E \to E_{B,G}$.

\begin{theorem}
\label{thm: non-equi coho-lower-bound for equi-para-tc}
    Suppose $p \colon E \to B$ is a $G$-fibration. 
    If there exists cohomology classes $u_1,\dots,u_k \in H^*(E_{B,G};R)$ (for any commutative ring $R$) such that
    \begin{enumerate}
        \item $u_i$ restricts to zero in $H^*(d_GE;R)$ for $i=1,\dots,k$;
        \item $u_1 \smile \dots \smile u_k \neq 0$ in $H^*(E_{B,G};R)$, 
    \end{enumerate}
    then $\TC_G[p \colon E \to B]>k$.
\end{theorem}

\begin{proof} 
Suppose $\TC_G[p \colon E \to B] \leq k$.
Then there exists a $G$-invariant open cover $\{U_1,\dots,U_k\}$ of $E \times_B E$ such that each $U_i$ admits a $G$-equivariant section of $\Pi$.
By \Cref{thm: equivalent defn of a section of equi-para-tc}, for each $i =1, \dots, k$, there exists a $G$-homotopy $H_i \colon U_i \times I \to E \times_B E$ from the inclusion map $j_{U_i} \colon U_i \hookrightarrow E \times_B E$ to a $G$-map $f_i \colon U_i \to E \times_B E$ which takes values in $\Delta(E)$. 
Let $\overline{U_i} := \rho(U_i)$. 
As $I$ is locally compact, $H_i$ induces a homotopy $\overline{H_i} \colon \overline{U_i} \times I \to E_{B,G}$ from the inclusion map $j_{\overline{U_i}} \colon \overline{U_i} \hookrightarrow E_{B,G}$ to a map $\overline{f_i} \colon \overline{U_i} \to E_{B,G}$ which takes values in $d_GE$.
Thus, the following diagram 
    \[
\begin{tikzcd}
                                                                                   & d_GE \arrow[d, "j_{dX}", hook] \\
\overline{U_i} \arrow[r, "j_{\overline{U_i}}"', hook] \arrow[ru, "\overline{f_i}"] & {E_{B,G}}                   
\end{tikzcd}
    \]
    is commutative.
    Hence, by hypothesis (1), each $u_i$ restricts to zero in $H^{*}(\overline{U_i};R)$.
    By the long exact sequence of the pair $(E_{B,G},d_GE)$, there exist classes $v_i \in H^{*}(E_{B,G},\overline{U_i};R)$ such that $v_i$ maps to $u_i$ under the coboundary map $H^{*}(E_{B,G},\overline{U_i};R) \to H^{*}(E_{B,G};R)$.
    Hence, we get
    $$
        v_1 \smile \dots \smile v_k 
            \in H^{*}(E_{B,G},\cup_{i=1}^{k} \overline{U_i};R)
            = H^{*}(E_{B,G},E_{B,G};R)
            = 0.
    $$
    Thus, by the naturality of cup products, we get $u_1 \smile \dots \smile u_k = 0 \in H^{*}(E_{B,G};R)$, contradicting the hypothesis (2).
\end{proof}

When $B$ is a point, the above theorem yields a cohomological lower bound for $\TC_G(E)$, as illustrated in the example below.

\begin{example}
Let $G = \Z_2$ act on $S^{n}$ via the antipodal action.  
Then $(E \times E)/\Z_2$ is the projective product space in the sense of Davis \cite{Davis}. 
If $n$ is odd, the mod-2 cohomology ring of $(E \times E)/\Z_2$ is given by 
$$
    \frac{\Z_2[y]}{\langle y^{n+1} \rangle} \otimes \Lambda_{\Z_2}[x],
        \quad \text{with } |y|=1,\,|x|=n \text{ (cohomological degrees)},
$$ 
where $\Z_2[y]/\langle y^{n+1} \rangle$ is the mod-2 cohomology ring of $d_{\Z_2}(S^n) = \R P^n$.
If $n$ is even, the mod-2 cohomology ring of $(E \times_B E)/\Z_2$ is the same as above with the additional relation $x^2 = y^n x$, see \cite[Theorem 2.1]{Davis}. 
Hence, the cohomology classes of $(E \times E)/\Z_2$ that restrict to zero on $d_{Z_2}(S^n)$ are precisely those in $\Lambda_{\Z_2}[x]$. 
We note that:
\begin{itemize}
    \item If $n$ is odd, then $x^2 = 0$; see Equation (2.5) in \cite[Theorem 2.1]{Davis}.
    \item If $n$ is even, then $x^3 = (y^nx)x = y^n x^2 = y^n (y^n x) = y^{2n}x = 0$, since $y^{n+1} = 0$.
\end{itemize}
Hence, by \Cref{thm: non-equi coho-lower-bound for equi-para-tc}, the lower bound on $\TC_{\Z_2}(S^n)$ is 2 if $n$ is odd and 3 if $n$ is even.
We note that
$$
    \TC_{\Z_2}(S^n) = 
        \begin{cases}
            2, & \text{for }n\text{ odd},\\
            3, & \text{for }n\text{ even},
        \end{cases}
$$
see \cite[Corollary 4.4]{eqtcprodineq}.
Hence, the lower bound we obtain is optimal.
We note that we can obtain the same lower bounds using the inequality $\TC(S^n) \leq \TC_{\Z_2}(S^n)$.
\end{example}


The product inequality for parametrized topological complexity was proved in \cite[Proposition 6.1]{farber-para-tc}. 
We now establish the corresponding equivariant analogue.

\begin{theorem}
\label{thm: prod-ineq for equi-para-tc}
Let $p_1 \colon E_1 \to B_1$ be a $G_1$-fibration and $p_2 \colon E_2 \to B_2$ be a $G_2$-fibration. 
If $(E_1 \times E_1) \times (E_2 \times E_2)$ is $(G_1 \times G_2)$-completely normal, then
\[
    \TC_{G_1 \times G_2}[p_1 \times p_2 \colon E_1 \times E_2 \to B_1 \times B_2]
        \leq \TC_{G_1}[p_1 \colon E_1 \to B_1]+\TC_{G_2}[p_2 \colon E_2\to B_2] - 1,
\]
where $G_i$ acts on $E_i \times E_i$ diagonally for $i=1,2$; and $G_1 \times G_2$ acts on $(E_1 \times E_1) \times (E_2 \times E_2)$ componentwise.
\end{theorem}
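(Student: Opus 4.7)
The plan is to reduce the statement to the product inequality for the equivariant sectional category of the parametrized endpoint fibrations, and then invoke \cite[Proposition 2.9]{A-D-S}.

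First, I would exhibit two natural $(G_1 \times G_2)$-equivariant homeomorphisms. A path $\gamma(t) = (\alpha_1(t), \alpha_2(t))$ in $E_1 \times E_2$ whose image lies in a single fibre of $p_1 \times p_2$ is exactly a pair of paths $\alpha_i \in (E_i)^I_{B_i}$, so
\[
\varphi \colon (E_1 \times E_2)^I_{B_1 \times B_2} \xrightarrow{\cong} (E_1)^I_{B_1} \times (E_2)^I_{B_2}, \qquad \gamma \mapsto (\alpha_1, \alpha_2),
\]
is a $(G_1 \times G_2)$-equivariant homeomorphism. Similarly,
\[
\psi \colon (E_1 \times E_2) \times_{B_1 \times B_2} (E_1 \times E_2) \xrightarrow{\cong} (E_1 \times_{B_1} E_1) \times (E_2 \times_{B_2} E_2),
\]
sending $((e_1,e_2),(e_1',e_2')) \mapsto ((e_1,e_1'),(e_2,e_2'))$, is a $(G_1 \times G_2)$-equivariant homeomorphism. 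A direct check of endpoints shows that $\psi$ and $\varphi$ intertwine the parametrized endpoint fibrations, i.e., they fit into a commutative square identifying $\Pi_{p_1 \times p_2}$ with $\Pi_{p_1} \times \Pi_{p_2}$ as $(G_1 \times G_2)$-fibrations.

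Using this identification together with the definition of equivariant parametrized topological complexity, one obtains
\[
\TC_{G_1 \times G_2}[p_1 \times p_2 \colon E_1 \times E_2 \to B_1 \times B_2] = \secat_{G_1 \times G_2}(\Pi_{p_1} \times \Pi_{p_2}).
\]
The hypothesis that $(E_1 \times E_1) \times (E_2 \times E_2)$ is $(G_1 \times G_2)$-completely normal is inherited by the subspace $(E_1 \times_{B_1} E_1) \times (E_2 \times_{B_2} E_2)$, since complete normality passes to subspaces. Applying the product inequality for equivariant sectional category \cite[Proposition 2.9]{A-D-S} to the $G_i$-fibrations $\Pi_{p_i}$ then yields
\[
\secat_{G_1 \times G_2}(\Pi_{p_1} \times \Pi_{p_2}) \leq \secat_{G_1}(\Pi_{p_1}) + \secat_{G_2}(\Pi_{p_2}) - 1,
\]
which is exactly the desired inequality after rewriting both sides in terms of $\TC_{G_i}[p_i \colon E_i \to B_i]$.

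The main obstacle is the Ostrand-type merging argument packaged inside \cite[Proposition 2.9]{A-D-S}: given $G_i$-invariant open covers $\{U_1^{(i)}, \dots, U_{n_i}^{(i)}\}$ of $E_i \times_{B_i} E_i$ whose members admit equivariant sections of $\Pi_{p_i}$, one forms the $(G_1 \times G_2)$-invariant product cover $\{U_r^{(1)} \times U_s^{(2)}\}$ of the fibre product and then separates, inside each diagonal stripe $r + s = k+1$, the pieces $U_r^{(1)} \times U_s^{(2)}$ into disjoint $(G_1 \times G_2)$-invariant open sets using complete normality, obtaining $n_1 + n_2 - 1$ sets that still support $(G_1 \times G_2)$-equivariant sections of $\Pi_{p_1} \times \Pi_{p_2}$. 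Since this delicate step is encapsulated in the cited result, the remaining work is just the bookkeeping of the identifications above.
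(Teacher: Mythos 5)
Your proposal is correct and follows essentially the same route as the paper: identify $\Pi_{p_1\times p_2}$ with the product fibration $\Pi_{p_1}\times\Pi_{p_2}$ via the evident equivariant homeomorphisms of path spaces and fibre products, note that $(G_1\times G_2)$-complete normality passes to the subspace $(E_1\times_{B_1}E_1)\times(E_2\times_{B_2}E_2)$, and conclude by the product inequality for equivariant sectional category in \cite[Proposition 2.9]{A-D-S}. No gaps.
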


\begin{proof}
Let $\Pi_1$ and $\Pi_2$ denote the equivariant parametrized fibrations corresponding to $p_1$ and $p_2$, respectively.
Then the equivariant parametrized fibration corresponding to the $(G_1 \times G_2)$-fibration $p_1 \times p_2$ is equivalent to the product $(G_1 \times G_2)$-fibration $\Pi_1 \times \Pi_2$, see \cite[Proposition 6.1]{farber-para-tc} for this identification.
Since a subspace of a $(G_1 \times G_2)$-completely normal space is itself $(G_1 \times G_2)$-completely normal, it follows that $(E_1 \times_{B_1} E_1) \times (E_2 \times_{B_2} E_2)$ is $(G_1 \times G_2)$-completely normal. 
Hence,
    \begin{align*}
        \TC_{G_1 \times G_2}[p_1 \times p_2 \colon E_1 \times E_2 \to B_1 \times B_2] 
            & = \secat_{G_1 \times G_2}(\Pi_1 \times \Pi_2) \\
            & \leq \secat_{G_1}(\Pi_1) + \secat_{G_2}(\Pi_2) - 1\\
            & = \TC_{G_1}[p_1 \colon E_1 \to B_1]+\TC_{G_2}[p_2 \colon E_2\to B_2] -1,
    \end{align*}
by \cite[Proposition 3.7]{A-D-S}.
\end{proof}

\begin{corollary}
\label{cor: special-prod-ineq for equi-para-tc}
Suppose $p_i \colon E_i \to B_i$ is a $G$-fibration for $i =1,2$. 
If $G$ is compact Hausdorff, then $p_1 \times p_2 \colon E_1 \times E_2 \to B_1 \times B_2$ is a $G$-fibration, where $G$ acts diagonally on the spaces $E_1 \times E_2$ and $B_1 \times B_2$.
Furthermore, if 
$E_1\times E_1 \times E_2 \times E_2$ is completely normal, then
\[
    \TC_{G}[p_1 \times p_2 \colon E_1 \times E_2 \to B_1 \times B_2]
        \leq \TC_{G}[p_1 \colon E_1 \to B_1] + \TC_{G}[p_2 \colon E_2\to B_2]-1.
\]
\end{corollary}

\begin{proof}
In \Cref{prop: special-prod-ineq for equi-secat}, we showed that $p_1 \times p_2$ is a $G$-fibration. 
Moreover, by \cite[Lemma 3.12]{colmangranteqtc}, it follows that $(E_1\times E_1) \times (E_2 \times E_2)$ is $(G \times G)$-completely normal. 
Hence, the desired inequality 
    \begin{align*}
        \TC_{G}[p_1 \times p_2 \colon E_1 \times E_2 \to B_1 \times B_2]
            & \leq \TC_{G \times G}[p_1 \times p_2 \colon E_1 \times E_2 \to B_1 \times B_2] \\
            & \leq \TC_{G}[p_1 \colon E_1 \to B_1]+\TC_{G}[p_2 \colon E_2\to B_2]-1
    \end{align*}
follows from \Cref{cor: fixed-point-and-subgroup-ineq for equi-para-tc} (2) and \Cref{thm: prod-ineq for equi-para-tc}.
\end{proof}

\begin{corollary}
\label{cor: equi-para-tc of pullback fibration under diagonal map}
Suppose $p_i \colon E_i \to B$ is a $G$-fibration for $i =1,2$. 
Let $E_1 \times_B E_2 = \{(e_1,e_2) \in E_1 \times E_2 \mid p_1(e_1) = p_2(e_2)\}$ and let $p\colon  E_1 \times_B E_2 \to B$ be the $G$-map given by $p(e_1,e_2)=p_1(e_1)=p_2(e_2)$, where $G$ acts on $E_1 \times_B E_2$ diagonally. 
If $G$ is compact Hausdorff, then $p$ is a $G$-fibration. 
Furthermore, if 
$E_1 \times E_1 \times E_2 \times E_2$ is completely normal, then
    \[
        \TC_{G}[p \colon E_1 \times_B E_2 \to B]
            \leq \TC_{G}[p_1 \colon E_1 \to B]+\TC_{G}[p_2 \colon E_2\to B]-1,
    \]
\end{corollary}

\begin{proof}
In \Cref{cor: equi-secat of pullback fibration under diagonal map}, we established that $p$ is a $G$-fibration.
Then, by identifying $B$ with its image under the diagonal map $\Delta \colon B \to B \times B$ in the base of the fibration $p_1 \times p_2$ (see \eqref{diag: pullback under diagonal of product fibration}), the desired inequality follows from \cite[Proposition 4.6]{D-EqPTC} and \Cref{cor: special-prod-ineq for equi-para-tc}.
\end{proof}


\section{Invariant Parametrized Topological Complexity}
\label{sec: inv-para-tc}

In this section, we introduce  the main object of our study, the invariant parametrized topological complexity. 

Suppose $p \colon E \to B$ is a $G$-fibration. Define the space
$$
E^{I}_B \times_{E/G} E^{I}_B 
    := \{ (\gamma,\delta) \in E^{I}_B \times E^I_B \mid G\cdot \gamma(1) = G \cdot \delta(0)\}.
$$
Then the following diagram
\[
\begin{tikzcd}
E^{I}_B \times_{E/G} E^{I}_B  \arrow[d, "\pi_1"'] \arrow[r, "\pi_2"] & E^{I}_B \arrow[d, "{\pi_E\, \circ\, e_0}"] \\
E^{I}_B \arrow[r, "{\pi_E\, \circ\, e_1}"]                           & E/G                                       
\end{tikzcd}
\]
is a pullback. 
For each path $\alpha \in E^{I}_B$, let $b_{\alpha}$ denote the element in $B$ such that $\alpha$ takes values in the fibre $p^{-1}(b_{\alpha})$.
Define the map
\begin{equation}\label{eq:inv-parametrized-fibration}
    \Psi \colon E^{I}_B \times_{E/G} E^{I}_B  \to E \times_{B/G} E, \quad \text{by} \quad \Psi(\gamma,\delta) = (\gamma(0),\delta(1)).
\end{equation}
The map $\Psi$ is well-defined as $\gamma(1) = g \cdot \delta(0)$ for some $g \in G$ and $\gamma,\delta \in E^{I}_B$ implies that $b_{\gamma} = g \cdot b_{\delta}$. Hence, $p(\gamma(0)) = b_{\gamma} = g \cdot b_\delta = g \cdot p(\delta(1))$ implies $(\gamma(0),\delta(1)) \in E \times_{B/G} E$.

As $E^{I}_B \times_{E/G} E^{I}_B$ and $E \times_{B/G} E$ are $(G \times G)$-invariant subsets of $E^{I}_B \times E^{I}_B$ and $E \times E$ respectively, we get a $(G \times G)$-action on $E^{I}_B \times_{E/G} E^{I}_B$ and $E \times_{B/G} E$, and $\Psi$ becomes a $(G \times G)$-equivariant map.

\begin{proposition}\label{prop:Pi-is-G2-fibration}
    If $p \colon E \to B$ is a $G$-fibration, then the map $\Psi \colon E^{I}_B \times_{E/G} E^I_B \to E \times_{B/G} E$ is a $(G \times G)$-fibration.
\end{proposition}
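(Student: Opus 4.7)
The plan is to verify the $(G \times G)$-equivariant homotopy lifting property for $\Pi$ by constructing a $(G\times G)$-equivariant lifting function. First I would invoke the hypothesis that $p \colon E \to B$ is a $G$-fibration to obtain a $G$-equivariant (regular) lifting function $\lambda_p \colon E \times_B B^I \to E^I$. From $\lambda_p$ one can bootstrap $G$-equivariant lifting functions for the auxiliary $G$-fibrations $e_0, e_1 \colon E^I_B \to E$ and for the parametrized endpoint map $\Pi_0 \colon E^I_B \to E \times_B E$, the latter being a $G$-fibration by \cite[Corollary 4.3]{D-EqPTC}.

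Given a $(G \times G)$-equivariant lifting problem with initial data $(\gamma_0, \delta_0) \in E^I_B \times_{E/G} E^I_B$ and path $\omega = (\omega_1, \omega_2) \colon I \to E \times_{B/G} E$ based at $(\gamma_0(0), \delta_0(1))$, I would lift $\omega_1$ through $e_0$ to a path $t \mapsto \gamma_t$ in $E^I_B$ starting at $\gamma_0$ with $\gamma_t(0) = \omega_1(t)$, and symmetrically lift $\omega_2$ through $e_1$ to $t \mapsto \delta_t$ with $\delta_t(1) = \omega_2(t)$ and $\delta_0$ preserved. This gives a candidate lift $t \mapsto (\gamma_t, \delta_t)$ in $E^I_B \times E^I_B$, manifestly $(G\times G)$-equivariant by equivariance of the two lifting functions.

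The main obstacle is to guarantee the compatibility condition $\gamma_t(1) \in G \cdot \delta_t(0)$ for all $t$, so that the candidate lift actually takes values in $E^I_B \times_{E/G} E^I_B$. This fails for independent lifts in general, because the projections $p\omega_1, p\omega_2 \colon I \to B$ take values in the same $G$-orbit pointwise but need not be related by a continuous $G$-valued function. To overcome this, I plan to perform the two lifts in a coordinated way, reducing to the concatenable subspace
\[
    P := \{(\gamma, \delta) \in E^I_B \times E^I_B : \gamma(1) = \delta(0)\},
\]
on which the diagonal $G$-action acts and the restriction $\Pi|_P \colon P \to E \times_B E$ is a $G$-fibration (equivalent to $\Pi_0$ via reparametrization of length-$2$ path concatenation). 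Since every element of $E^I_B \times_{E/G} E^I_B$ is of the form $(1, g)\cdot(\gamma, \delta)$ with $(\gamma, \delta) \in P$, and since $E \times_{B/G} E = (\{1\} \times G) \cdot (E \times_B E)$, one transports the lifting problem into $P$ via a suitable element $(1, g)$, applies the $G$-equivariant lifting function for $\Pi|_P$, and transports back by $(1, g^{-1})$.

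The hardest step will be ensuring that this transport is well-defined and continuous globally, particularly when point stabilizers are non-trivial so that the element $g$ is not uniquely determined. The cleanest way to handle this is to exploit the regularity of $\lambda_p$ (so constant paths lift to constant paths, and the transport behaves well on orbits) and patch over a cover by $(G\times G)$-invariant neighborhoods on which $g$ can be chosen continuously. The argument is a parametrized refinement of \cite[Proposition 3.7]{invarianttc}, where the non-parametrized case (with $B$ a point and $p$ the trivial fibration to a point) was established by Lubawski and Marzantowicz; the new difficulty here is that the fibres of $p$ vary along $\omega$, so the endpoints $\gamma_t(1)$ and $\delta_t(0)$ cannot simply be held constant as they could in the non-parametrized setting.
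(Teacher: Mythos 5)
You have correctly isolated the real difficulty---independent lifts of $\omega_1$ and $\omega_2$ only guarantee that $\gamma_t(1)$ and $\delta_t(0)$ lie over points of $B$ in the same $G$-orbit, not that they lie in the same $G$-orbit of $E$---but the mechanism you propose for closing it does not work. Transporting a lifting problem into the concatenable subspace $P$ by an element $(1,g)$ requires a \emph{single} $g$ with $p(\omega_1(t))=g\cdot p(\omega_2(t))$ for all $t$ (and for all points of the test space as well), and such a $g$ need not exist: already for $G=S^1$ acting on $B=S^1$ by rotation, the orbit-relating element varies continuously and non-constantly along $\omega$. What your reduction actually needs is a continuous, equivariant selection $(z,t)\mapsto g(z,t)\in G$ realizing the orbit relation, and nothing in the hypotheses provides one: the action is not assumed free, $G$ is not assumed compact or Lie, and the orbit map $E\to E/G$ is not assumed to admit local sections, so the element $g$ is neither well defined (nontrivial, varying stabilizers) nor continuously choosable in general. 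Regularity of $\lambda_p$ is irrelevant to this selection problem, and ``patching over a cover by invariant neighborhoods on which $g$ can be chosen continuously'' presupposes both that such neighborhoods exist and that the resulting cover is numerable so that the local lifting data assemble into a global verification of the homotopy lifting property; neither point is justified, and this is precisely where the proof would break down.

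The paper's proof avoids choosing group elements altogether. It writes $\Pi$ as a composite of restrictions of $(G\times G)$-fibrations to preimages of $(G\times G)$-invariant subsets of their bases: one restricts the product $(G\times G)$-fibration $E^I_B\times E^I_B\to(E\times_B E)\times(E\times_B E)$ (two copies of the equivariant parametrized endpoint fibration) over the invariant set $S$ of quadruples whose two middle coordinates lie in $E\times_{E/G}E$---so your compatibility condition is encoded as membership in an invariant subset rather than as a choice of $g$---and then projects $S$ onto $E\times_{B/G}E$ via the outer coordinates, using that the coordinate projections of $E\times_B E$ are $G$-fibrations as pullbacks of $p$. If you wish to rescue a direct lifting-function argument along your lines, you would need to impose additional hypotheses (for instance, $G$ compact Lie acting freely so that local cross-sections of the orbit map exist), which the proposition does not assume.
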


\begin{proof}
    Suppose $E^I_B \to E \times_B E$ is the equivariant parametrized fibration corresponding to $p$.
    Suppose $\widehat{p} \colon E^{I}_B \times E^{I}_B \to (E \times_B E) \times (E \times_B E)$ is the product $(G \times G)$-fibration. 
    Define
    $$
    S := \{(e_1,e_2,e_3,e_4) \in (E \times_B E) \times (E \times_B E) \mid (e_2, e_3) \in E \times_{E/G} E\}.
    $$
    It is readily checked that $(\gamma,\delta) \in E^I_B \times_{E/G} E^{I}_B$ if and only if $(\gamma,\delta) \in (\widehat{p})^{-1}(S)$.
    Since $S$ is $(G \times G)$-invariant, it follows that the restriction
    $$
    \left.\widehat{p}\right|_{E^I_B \times_{E/G} E^{I}_B} \colon 
        E^I_B \times_{E/G} E^{I}_B \to S
    $$
    is a $(G \times G)$-fibration. 
    
    Now consider the pullback diagram
    \[
    \begin{tikzcd}
    E \times_B E \arrow[r, "\pi_2"] \arrow[d, "\pi_1"'] & E \arrow[d, "p"] \\
    E \arrow[r, "p"]                                    & B .              
    \end{tikzcd}
    \]
    As $p$ is a $G$-fibration, it follows that $\pi_1$ and $\pi_2$ are $G$-fibrations. Hence, the projection map $\pi_{1,4} := \pi_1 \times \pi_4 \colon (E \times_B E) \times (E \times_B E) \to E \times E$, given by $(e_1,e_2,e_3,e_4) \mapsto (e_1,e_4)$, is a $(G \times G)$-fibration. 
    It is readily checked that $(e_1,e_2,e_3,e_4) \in S$ if and only if $(e_1,e_2,e_3,e_4) \in (\pi_{1,4})^{-1}(E \times_{B/G} E)$. Since $E \times_{B/G} E$ is $(G \times G)$-invariant, it follows that
    $$ 
        \left.\pi_{1,4}\right|_{S} \colon S \to E \times_{B/G} E
    $$
    is a $(G \times G)$-fibration. 
    Hence, $\Psi = \left.\pi_{1,4}\right|_{S} \circ \left.\widehat{p}\right|_{E^I_B \times_{E/G} E^{I}_B}$ is a $(G \times G)$-fibration.
\end{proof}

We now introduce the main object of our study, which is a parametrized analogue of invariant topological complexity introduced by Lubawski and Marzantowicz in \cite{invarianttc}.

\begin{definition}\label{def:inv-para-tc} 
Suppose $p\colon E \to B$ is a $G$-fibration. The invariant parametrized topological complexity, denoted by $\TC^G[p \colon E \to B]$ is defined as
\[
    \TC^{G}[p \colon E \rightarrow B] := \secat_{G\times G} \left(\Psi \right).
\]    
\end{definition}

The $G$-homotopy invariance of the invariant topological complexity was established by Lubawski and Marzantowicz in \cite[Proposition 2.4 and Lemma 3.8]{invarianttc}. We will now establish the corresponding parametrized analogue. In particular, we establish the equivariant fibrewise homotopy invariance of invariant parametrized topological complexity. We refer the reader to \cite[Section 4.1]{D-EqPTC} for basic information about fibrewise equivariant homotopy equivalence.

\begin{theorem}
\label{thm:G-homotopy-invariace-of-inv-para-tc}
    If $p \colon E \to B$ and $p' \colon E' \to B$ are $G$-fibrations which are fibrewise $G$-homotopy equivalent, then
    \[
        \TC^{G}[p \colon E \to B] = \TC^{G}[p' \colon E' \to B].
    \]
\end{theorem}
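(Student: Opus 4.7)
The plan is to exhibit a commutative square relating $\Pi$ and $\Pi'$ whose horizontal arrows are $(G\times G)$-homotopy equivalences, and then invoke the homotopy invariance of equivariant sectional category in \Cref{prop: properties of equi-sec-cat}.

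Let $f\colon E\to E'$ be a fibrewise $G$-homotopy equivalence with fibrewise $G$-inverse $g\colon E'\to E$; thus $p'\circ f = p$, $p\circ g = p'$, and there exist $G$-homotopies $H\colon g\circ f \simeq_G \mathrm{id}_E$ and $H'\colon f\circ g \simeq_G \mathrm{id}_{E'}$ with $p\circ H_t = p$ and $p'\circ H'_t = p'$ throughout. I would first define the induced $(G\times G)$-maps
\[
\overline{f}\colon E\times_{B/G} E \to E'\times_{B/G} E',\quad (e_1,e_2)\mapsto (f(e_1),f(e_2)),
\]
\[
\widetilde{f}\colon E^I_B\times_{E/G} E^I_B \to (E')^I_B\times_{E'/G} (E')^I_B,\quad (\gamma,\delta)\mapsto (f\circ\gamma, f\circ\delta),
\]
and analogous maps $\overline{g}$, $\widetilde{g}$ from $g$. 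Well-definedness uses $p'\circ f = p$ (to keep paths in a single fibre) together with $G$-equivariance of $f$ (which preserves the orbit relation $G\cdot\gamma(1)=G\cdot\delta(0)$), and a direct check produces the commutative square
\[
\begin{tikzcd}
E^I_B\times_{E/G} E^I_B \arrow[r,"\widetilde{f}"] \arrow[d,"\Pi"'] & (E')^I_B\times_{E'/G}(E')^I_B \arrow[d,"\Pi'"] \\
E\times_{B/G} E \arrow[r,"\overline{f}"'] & E'\times_{B/G} E'.
\end{tikzcd}
\]

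Next I would show that $\overline{f}$ and $\widetilde{f}$ are $(G\times G)$-homotopy equivalences with respective inverses $\overline{g}$ and $\widetilde{g}$. For the composition $\overline{g}\circ\overline{f}$, the recipe $(e_1,e_2,t)\mapsto (H_t(e_1),H_t(e_2))$ furnishes a $(G\times G)$-homotopy to the identity; this lands inside $E\times_{B/G} E$ because $H$ is fibrewise (so $p\circ H_t = p$, preserving the relation $G\cdot p(e_1)=G\cdot p(e_2)$). The same recipe $(\gamma,\delta,t)\mapsto (H_t\circ\gamma, H_t\circ\delta)$ gives a $(G\times G)$-homotopy $\widetilde{g}\circ\widetilde{f}\simeq_{G\times G}\mathrm{id}$, using that $H_t\circ\gamma\in E^I_B$ and that $G$-equivariance of $H$ forces $H_t(g_0\cdot e) = g_0\cdot H_t(e)$, so $\gamma(1) = g_0\cdot\delta(0)$ implies $H_t(\gamma(1)) = g_0\cdot H_t(\delta(0))$. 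The symmetric arguments using $H'$ furnish the remaining two homotopies.

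Finally, applying parts (2) and (3) of \Cref{prop: properties of equi-sec-cat} to $\Pi'$ --- first precomposing with the $(G\times G)$-homotopy equivalence $\widetilde{f}$, then rewriting $\Pi'\circ\widetilde{f} = \overline{f}\circ\Pi$ and using that $\overline{f}$ is a $(G\times G)$-homotopy equivalence of bases --- yields
\[
\secat_{G\times G}(\Pi') = \secat_{G\times G}(\Pi'\circ\widetilde{f}) = \secat_{G\times G}(\overline{f}\circ\Pi) = \secat_{G\times G}(\Pi),
\]
which is the claimed equality $\TC^G[p'\colon E'\to B] = \TC^G[p\colon E\to B]$. The main technical obstacle is the intermediate step: verifying that the induced homotopies really do stay inside the fibered products $E\times_{B/G} E$ and $E^I_B\times_{E/G} E^I_B$. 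Both constraints reduce to the fact that $H$ and $H'$ are simultaneously fibrewise (preserving the $B/G$ relation) and $G$-equivariant (preserving the $E/G$ relation).
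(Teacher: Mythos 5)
Your proposal is correct and follows essentially the same route as the paper: both construct the induced $(G\times G)$-maps on $E\times_{B/G}E$ and $E^I_B\times_{E/G}E^I_B$, observe the commutative square with $\Pi$ and $\Pi'$, and use that the composites of the induced maps are $(G\times G)$-homotopic to the identity to conclude equality of the equivariant sectional categories. The only cosmetic difference is that you close the argument with \Cref{prop: properties of equi-sec-cat}, whereas the paper cites an external lemma from \cite{D-EqPTC}.
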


\begin{proof}
Suppose we have a fibrewise $G$-homotopy equivalence given by the following commutative diagram:
\[
\begin{tikzcd}
E \arrow[rr, "f", shift left] \arrow[rd, "p"', shift right] &   & E' \arrow[ll, "f'", shift left] \arrow[ld, "p'", shift left] \\
                                                            & B. &                                                             
\end{tikzcd}
\]
Suppose $\tilde{f} = f\times f$, $\tilde{f}^I(\gamma,\delta) = (f\circ \gamma,f\circ \delta)$ and $\tilde{f'}$, $(\tilde{f'})^I$ are defined similarly. 
Note that $\tilde{f}$ and $\tilde{f'}$ are $(G\times G)$-maps. 
Then we have the following commutative diagram.
\[ 
\begin{tikzcd}
E^I_B\times_{E/G} E^I_B \arrow{r}{\tilde{f}^I} \arrow[swap]{d}{\Psi}    & (E')^I_B\times_{E'/G} (E')^I_B \arrow{d}{\Psi'} \arrow{r}{(\tilde{f'})^I} & E^I_B \times_{E/G} E^I_B \arrow{d}{\Psi} \\%
E\times_{B/G} E \arrow{r}{\tilde{f}} & E'\times_{B/G} E' \arrow{r}{\tilde{f'}} & E\times_{B/G} E.
    \end{tikzcd}
\]
Since the maps $f'\circ f$ and $\mathrm{id}_E$ are fibrewise $G$-homotopy equivalent, it follows that the maps $\tilde{f'}\circ \tilde{f}$ and $\mathrm{id}_{E\times_{B/G} E}$ are $(G \times G)$-homotopy equivalent. 
Then, using \cite[Lemma 4.10 (2)]{D-EqPTC}, we obtain the inequality
\[
    \TC^G[p \colon E\to B] 
        = \secat_{G\times G}(\Psi)
        \leq \secat_{G\times G}(\Psi')
        = \TC^G[p' \colon E'\to B].
\]
Similarly, we can derive the reverse inequality, which completes the proof.   
\end{proof}

The next proposition shows that the invariant parametrized topological complexity of a $G$-fibration is a generalization of both the parametrized topological complexity of a fibration \cite{farber-para-tc} and the invariant topological complexity of a $G$-space \cite{invarianttc}.

\begin{proposition}
\label{prop: special cases of inv-para-tc}
Suppose $p \colon E \to B$ is a $G$-fibration.
\begin{enumerate}
\item If $G$ acts trivially on $E$ and $B$, then $\TC^{G}[p \colon E \rightarrow B] = \TC[p \colon E \rightarrow B]$.

\item If $B=\{*\}$, then $\TC^{G}[p \colon E \rightarrow \{*\}] = \TC^{G}(E)$.

\end{enumerate}
\end{proposition}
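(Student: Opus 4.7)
My plan is to handle the two parts separately, with the easier part being $(2)$.

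For part $(2)$: when $B=\{*\}$, every path in $E$ automatically lies in the (unique) fibre $p^{-1}(*) = E$, so $E^I_B = E^I = PE$. Similarly $B/G=\{*\}$ forces $E\times_{B/G}E = E\times E$. Substituting these identifications into \Cref{eq:inv-parametrized-fibration}, the invariant parametrized endpoint map $\Pi$ becomes literally the map $\p \colon PE \times_{E/G} PE \to E\times E$ recalled in \Cref{subsec:invtc} in the definition of $\TC^G(E)$. The claim then follows directly from the two definitions, with the $(G \times G)$-action matching on both sides.

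For part $(1)$: with trivial $G$-action on $E$ and $B$, the induced $(G\times G)$-action on all relevant path spaces and fibre products is trivial, so $\secat_{G\times G}$ reduces to the ordinary sectional category. The identifications $E/G = E$ and $B/G = B$ give $E\times_{B/G}E = E\times_B E$, while $E^I_B\times_{E/G}E^I_B$ becomes the subspace of pairs of fibred paths $(\gamma,\delta)$ satisfying $\gamma(1)=\delta(0)$. The target of comparison is the usual parametrized endpoint map $\Pi'\colon E^I_B\to E\times_B E$ with $\TC[p\colon E\to B] = \secat(\Pi')$.

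The central construction is the concatenation map
\[
    c\colon E^I_B\times_{E/G}E^I_B \to E^I_B, \quad c(\gamma,\delta)(t) = \begin{cases} \gamma(2t) & t\in[0,1/2], \\ \delta(2t-1) & t\in[1/2,1]. \end{cases}
\]
Since $\gamma(1)=\delta(0)$ and both paths lie in a common fibre of $p$, the concatenation lies in $E^I_B$, so $c$ is well-defined. I plan to verify that $c$ is in fact a homeomorphism, with explicit inverse $\gamma\mapsto\bigl(t\mapsto\gamma(t/2),\; t\mapsto\gamma((t+1)/2)\bigr)$, and that $\Pi'\circ c = \Pi$, simply because the left endpoint of $c(\gamma,\delta)$ is $\gamma(0)$ and the right endpoint is $\delta(1)$. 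Then \Cref{prop: properties of equi-sec-cat}(2), applied with trivial group action, gives $\secat(\Pi)=\secat(\Pi'\circ c)=\secat(\Pi')$, which is the desired equality.

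I do not anticipate a substantive obstacle in either part: $(2)$ is a definitional unwinding, and the only real content in $(1)$ is the routine observation that the concatenation map is a homeomorphism factoring $\Pi$ through $\Pi'$, after which the invariance of the sectional category under homotopy equivalences of the domain closes the argument.
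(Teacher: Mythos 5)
Your proposal is correct and follows essentially the same route as the paper: part (2) is the same definitional unwinding, and for part (1) the paper uses exactly the same concatenation homeomorphism $(\gamma,\delta)\mapsto \gamma*\delta$ with the same restriction inverse to identify $\Pi$ with the ordinary parametrized endpoint map. Your explicit appeal to \Cref{prop: properties of equi-sec-cat}(2) to conclude is just making precise the final step the paper leaves implicit.
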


\begin{proof} 
(1) If $G$ acts trivially on $E$, then 
$E^{I}_B \times_{E/G} E^{I}_B = E^{I}_B \times_{E} E^{I}_B$. 
We note that the map 
$$
    E^{I}_B \to E^{I}_B \times_{E} E^{I}_B, 
        \quad
    \alpha \mapsto \left(\left.\alpha\right|_{\left[0,\frac{1}{2}\right]},\left.\alpha\right|_{\left[\frac{1}{2},1\right]}\right)    ,
$$
is a homeomorphism, whose inverse is given by concatenation of paths.
If $G$ acts trivially on $B$, then 
$E \times_{B/G} E = E \times_B E$. 
Therefore, the corresponding fibration $\Psi$ is given by \eqref{eq: para-tc}.
Hence, we get $\TC^{G}[p \colon E \rightarrow B] = \TC[p \colon E \rightarrow B]$.

(2) If $B=\{*\}$, then $E^{I}_B = E^{I}$ and $E \times_{B/G} E = E \times E$. 
Hence, the corresponding fibration $\Psi$ is given by \eqref{eq: inv-tc}.
Therefore, $\TC^{G}[p \colon E \rightarrow \{*\}] = \TC^{G}(E)$.
%
\end{proof}

\begin{proposition}
\label{prop: inv-para-TC for trivial-G-fib}
Let $p \colon B \times F \to B$ be the trivial $G$-fibration with $G$ acting trivially on $F$. Then
\[
    \TC^G[p \colon B \times F \to B] = \TC(F).
\]
\end{proposition}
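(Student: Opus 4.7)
The plan is to identify the fibration $\Pi$ explicitly in this trivial setting, recognize it as a product fibration whose second factor carries the entire non-trivial structure, and then compute its $(G\times G)$-sectional category directly.

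Since $G$ acts trivially on $F$, any path in $E=B\times F$ contained in a single fibre of $p$ has the form $t\mapsto(b,\delta(t))$ for some $b\in B$ and $\delta\in F^I$, so $E^I_B\cong B\times F^I$. Similarly $E/G=(B/G)\times F$, which gives identifications
\[
    E\times_{B/G} E \;\cong\; X\times F\times F,
    \qquad X:=\{(b,b')\in B\times B : Gb=Gb'\},
\]
and $E^I_B\times_{E/G}E^I_B\cong\{(b,b',\delta,\delta'):(b,b')\in X,\ \delta(1)=\delta'(0)\}$. The concatenation map $(b,b',\delta,\delta')\mapsto(b,b',\delta*\delta')$, with splitting inverse, is a $(G\times G)$-equivariant homeomorphism onto $X\times F^I$ under which $\Pi$ becomes the product fibration
\[
    \mathrm{id}_X \times \pi_F \colon X\times F^I \longrightarrow X\times F\times F,
\]
where $\pi_F\colon F^I\to F\times F$ is the endpoint evaluation on $F$. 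The $(G\times G)$-action is diagonal on $X$ and trivial on every $F$-factor, so $\TC^G[p\colon B\times F\to B]=\secat_{G\times G}(\mathrm{id}_X\times\pi_F)$.

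For the inequality $\TC^G[p]\leq \TC(F)$, take a cover $\{U_1,\dots,U_k\}$ of $F\times F$ with sections $\sigma_i\colon U_i\to F^I$ of $\pi_F$, where $k=\TC(F)$. Since $G\times G$ acts trivially on $F\times F$, the sets $X\times U_i$ form a $(G\times G)$-invariant open cover of $X\times F\times F$, and the maps $\mathrm{id}_X\times\sigma_i$ are $(G\times G)$-equivariant sections. Conversely, suppose $\{V_1,\dots,V_\ell\}$ is a $(G\times G)$-invariant open cover of $X\times F\times F$ with equivariant homotopy sections $s_i$, and fix any $b_0\in B$. Set $V_i':=V_i\cap(\{(b_0,b_0)\}\times F\times F)$, regarded as an open subset of $F\times F$. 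Since $(b_0,b_0)\in X$, the collection $\{V_i'\}$ covers $F\times F$, and the composite of $s_i|_{V_i'}$ with the projection $X\times F^I\to F^I$ is a continuous homotopy section of $\pi_F$ over $V_i'$; equivariance is not needed because the action on $F\times F$ is trivial. This yields $\TC(F)\leq \TC^G[p]$.

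The main work lies in the initial identifications: one must verify carefully that the concatenation homeomorphism is compatible with the $(G\times G)$-actions and intertwines $\Pi$ with $\mathrm{id}_X\times\pi_F$. Once this equivariant product description is in hand, the two inequalities become routine, because the trivial action on the $F$-factors automatically promotes non-equivariant sections to equivariant ones (upper bound), while evaluating at a fixed point $b_0\in B$ strips the $G$-structure out of any invariant cover (lower bound).
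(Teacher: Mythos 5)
Your proof is correct and follows essentially the same route as the paper: identify $E^I_B\times_{E/G}E^I_B\cong (B\times_{B/G}B)\times F^I$ via concatenation and recognize $\Pi$ as $\mathrm{id}_{B\times_{B/G}B}\times e_F$ with trivial $(G\times G)$-action on the $F$-factors. The only difference is that you spell out the two inequalities behind $\secat_{G\times G}(\mathrm{id}\times e_F)=\secat(e_F)$, which the paper simply asserts from the triviality of the action.
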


\begin{proof}
Let $E = B \times F$.
Then we note that $E^I_B = B \times F^I$ and $E \times_{B/G} E = (B \times_{B/G} B) \times (F \times F)$.  
Since $E/G = (B \times F)/G = (B/G) \times F$, it follows that
$$
    E^I_B \times_{E/G} E^I_B 
        = (B \times_{B/G} B) \times (F^I \times_{F} F^I) 
        \cong_G (B \times_{B/G} B) \times F^I,
$$
where the last $G$-homeomorphism is induced by $(\gamma,\delta) \in F^{I} \times_F F^{I} \mapsto \gamma*\delta \in F^{I}$.
Consequently, the corresponding fibration $\Psi$ is given by $\Psi = \mathrm{id}_{B \times_{B/G} B} \times e_F$, where $e_F \colon F^I \to F\times F$ denotes the free path space fibration of $F$. 
Hence,
\[
    \TC^G[p \colon E\to B] 
        = \secat_{G\times G}(\Psi)
        = \secat_{G\times G}(\mathrm{id} \times e_F)
        = \secat(e_F)
        = \TC(F),
\]
since $G$ acts trivially on $F$.
\end{proof}

Suppose $\mathbb{k}(E)$ is the saturation of the diagonal $\Delta(E)$ with respect to the $(G \times G)$-action on $E \times E$, i.e.,
$$
    \mathbb{k}(E) := (G \times G)\cdot \Delta(E) \subseteq E \times E.
$$
If $E \times_{E/G} E$ is the pullback corresponding to $\pi_E \colon E \to E/G$, i.e.,
\[
    E \times_{E/G} E := \{ (e_1,e_2) \in E \times E \mid \pi_E(e_1) = \pi_E(e_2)\},
\]
then it is readily checked that $\mathbb{k}(E) = E \times_{E/G} E \subseteq E \times_{B/G} E$. Hence, we will use the notation $\mathbb{k}(E)$ and $E \times_{E/G} E$ interchangeably.

In the next theorem, we establish the parametrized analogue of \cite[Lemma 3.8]{invarianttc}.

\begin{theorem}
\label{thm: equivalent defn of a section of in-para-tc}
    Suppose $p\colon E \to B$ is a $G$-fibration. For a $(G\times G)$-invariant (not necessarily open) subset $U$ of $E \times_{B/G} E$ the following are equivalent:
    \begin{enumerate}
        \item there exists a $(G \times G)$-equivariant section of $\Psi \colon E^{I}_B \times_{E/G} E^{I}_B  \to E \times_{B/G} E$ over $U$.
        \item there exists a $(G \times G)$-homotopy between the inclusion map $i_U \colon U \hookrightarrow E \times_{B/G} E$ and a $(G \times G)$-map $f \colon U \to E \times_{B/G} E$ which takes values in $E \times_{E/G} E$.
    \end{enumerate}
\end{theorem}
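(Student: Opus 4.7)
The plan is to mimic the proof of \Cref{thm: equivalent defn of a section of equi-para-tc}, adapting the contraction of paths and the constant-path lift so that they are compatible with the $(G\times G)$-action and land in the saturated diagonal $\mathbb{k}(E)=E\times_{E/G}E$ rather than in $\triangle(E)$. The two implications are independent of each other.

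For $(1)\Rightarrow(2)$, let $s\colon U\to E^{I}_{B}\times_{E/G}E^{I}_{B}$ be a $(G\times G)$-equivariant section of $\Pi$, and write $s(u)=(\gamma_u,\delta_u)$. By the definition of the pullback, $G\cdot\gamma_u(1)=G\cdot\delta_u(0)$. Define
$$
    K\colon \bigl(E^{I}_{B}\times_{E/G}E^{I}_{B}\bigr)\times I\longrightarrow E^{I}_{B}\times_{E/G}E^{I}_{B},\qquad K\bigl((\gamma,\delta),t\bigr)=(\gamma_t,\delta_t),
$$
where $\gamma_t(\sigma)=\gamma\bigl(t+\sigma(1-t)\bigr)$ and $\delta_t(\sigma)=\delta\bigl(\sigma(1-t)\bigr)$. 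Then $K$ is $(G\times G)$-equivariant because the group action commutes with reparametrization, $K_0=\mathrm{id}$, and $K_1(\gamma,\delta)=(c_{\gamma(1)},c_{\delta(0)})$; the $E/G$-compatibility is preserved throughout because $\gamma_t(1)=\gamma(1)$ and $\delta_t(0)=\delta(0)$ do not depend on $t$. The composite
$$
    F:=\Pi\circ K\circ (s\times\mathrm{id}_I)\colon U\times I\longrightarrow E\times_{B/G}E
$$
is then a $(G\times G)$-homotopy with $F_0=i_U$ and $F_1(u)=(\gamma_u(1),\delta_u(0))\in\mathbb{k}(E)$, so $f:=F_1$ is the required map.

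For $(2)\Rightarrow(1)$, let $H\colon U\times I\to E\times_{B/G}E$ be a $(G\times G)$-homotopy with $H_0=f$, $H_1=i_U$, and $f(U)\subseteq\mathbb{k}(E)$, and let $\pi_i\colon E\times E\to E$ denote projection onto the $i$-th factor. Define
$$
    s_0\colon U\to E^{I}_{B}\times_{E/G}E^{I}_{B},\qquad s_0(u)=\bigl(c_{\pi_1(f(u))},c_{\pi_2(f(u))}\bigr).
$$
Since $f(u)\in E\times_{E/G}E$ we have $G\cdot\pi_1(f(u))=G\cdot\pi_2(f(u))$, so $s_0$ lands in $E^{I}_{B}\times_{E/G}E^{I}_{B}$; it is manifestly $(G\times G)$-equivariant and satisfies $\Pi\circ s_0=f$. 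By \Cref{prop:Pi-is-G2-fibration}, $\Pi$ is a $(G\times G)$-fibration, so the equivariant homotopy lifting property furnishes a $(G\times G)$-map $\widetilde{H}\colon U\times I\to E^{I}_{B}\times_{E/G}E^{I}_{B}$ with $\widetilde{H}_0=s_0$ and $\Pi\circ\widetilde{H}=H$. Then $\Pi\circ\widetilde{H}_1=H_1=i_U$, so $\widetilde{H}_1$ is the desired $(G\times G)$-equivariant section of $\Pi$ over $U$.

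The only non-formal step in the argument is checking that the contraction $K$ is well-defined as a map into the fibre product $E^{I}_{B}\times_{E/G}E^{I}_{B}$: each $\gamma_t$ is a reparametrization of $\gamma$ (hence stays in a single fibre of $p$), and the pullback compatibility condition $G\cdot\gamma_t(1)=G\cdot\delta_t(0)$ is preserved precisely because the relevant endpoints are constant in $t$. Once this is noted, the equivariance of $K$ and $s_0$, the identity $\Pi\circ s_0=f$, and the extraction of a section from $\widetilde{H}_1$ are all routine.
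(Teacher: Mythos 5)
Your proposal is correct and follows essentially the same route as the paper's proof: the same reparametrization homotopy $(\gamma,\delta,t)\mapsto(\gamma_t,\delta_t)$ contracting to $(c_{\gamma(1)},c_{\delta(0)})$ (your formula $\gamma(t+\sigma(1-t))$ coincides with the paper's $\gamma(s+t(1-s))$), the same observation that the fixed endpoints $\gamma_t(1)=\gamma(1)$, $\delta_t(0)=\delta(0)$ keep everything inside the pullback over $E/G$, and the same constant-path lift $(c_{\pi_1(f(u))},c_{\pi_2(f(u))})$ combined with the equivariant homotopy lifting property of $\Pi$ for the converse.
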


\begin{proof}
    $(1) \implies (2)$. 
    Suppose $\sigma = (\sigma_1,\sigma_2) \colon U \to E^{I}_B \times_{E/G} E^{I}_B$ is a $(G \times G)$-equivariant section of $\Psi$. 
    Let $H \colon (E^{I}_B \times_{E/G} E^{I}_B) \times I \to E^{I}_B \times_{E/G} E^{I}_B$ be given by
    $$
    H(\gamma,\delta,t) = (\gamma_t',\delta_t'), \quad \text{for } (\gamma,\delta) \in E^{I}_B\times_{E/G} E^{I}_B, \text{ and } t \in I,
    $$
    where $\gamma_t'(s) = \gamma(s+t(1-s))$ and $\delta_t'(s) = \delta(s(1-t))$. 
    It is clear that $\gamma_t', \delta_t' \in E^{I}_B$, and $\gamma_t'(1) = \gamma(1)$ and $\delta_t'(0) = \delta(0)$ for all $(\gamma,\delta) \in E^{I}_B\times_{E/G} E^{I}_B$ and for all $t \in I$. 
    Hence, $H$ is well-defined. 
    Clearly, $H$ is $(G\times G)$-equivariant such that $H(\gamma,\delta,0) = (\gamma, \delta)$ and $H(\gamma,\delta,1) = (c_{\gamma(1)},c_{\delta(0)})$, where $c_{e}$ is the constant path in $E$ taking the value $e \in E$. 
    Then
    $$
    F := \Psi \circ H \circ (\sigma \times \mathrm{id}_I) \colon U \times I \to E \times_{B/G} E
    $$
    is a $(G \times G)$-homotopy such that $F_0 = \Psi \circ \mathrm{id}_{E^{I}_B \times_{E/G} E^{I}_B} \circ \sigma = i_U$ and $F_1(u) = \Psi(H_1(\sigma(u))) = ((\sigma_1(u))(1),(\sigma_2(u))(0))$. 
    As $\sigma(u) = (\sigma_1(u), \sigma_2(u)) \in E^{I}_B \times_{E/G} E^{I}_B$ for all $u \in U$, it follows $F_1(u) = ((\sigma_1(u))(1),(\sigma_2(u))(0)) \in E \times_{E/G} E$. 
    Hence, $F_1$ is the desired $(G\times G)$-homotopy.
    
    $(2) \implies (1)$. 
    Suppose $H \colon U \times I \to E \times_{B/G} E$ is a $(G \times G)$-homotopy between $f$ and $i_U$. 
    Let $\sigma \colon U \to E^{I}_B \times_{E/G} E^{I}_B$ be the $(G \times G)$-map given by $\sigma(u)=(c_{\pi_1(f(u))},c_{\pi_2(f(u))})$, where $\pi_i \colon E \times_{B/G} E \to E$ is the projection map onto the $i$-th factor. 
    The map $\sigma$ is well-defined, since $f$ takes values in $E \times_{E/G} E$. 
    By the $G$-homotopy lifting property of $\Psi$, there exists a $(G \times G)$-homotopy $\widetilde{H} \colon U \times I \to E^{I}_B \times_{E/G} E^{I}_B$ such that the following diagram 
\[
\begin{tikzcd}
U \times \{0\} \arrow[d, hook] \arrow[rr, "\sigma"]             &  & E^{I}_B \times_{E/G} E^{I}_{B} \arrow[d, "\Psi"] \\
U \times I \arrow[rr, "H"] \arrow[rru, "\widetilde{H}", dashed] &  & E \times_{B/G} E                                
\end{tikzcd}
\]
    commutes. 
    Then $\Psi \circ \widetilde{H}_1 = H_1 = i_U$ implies $\widetilde{H}_1$ is a $(G\times G)$-equivariant section of $\Psi$ over $U$.
\end{proof}

\begin{corollary}\label{cor:inv-para-tc-as-A-cat}
    For a $G$-fibration $p \colon E \to B$, we have
    \[
        \TC^{G}[p \colon E \to B] = ~_{\mathbb{k}(E)}\ct_{G \times G}(E \times_{B/G} E).
    \]
\end{corollary}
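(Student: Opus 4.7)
The plan is to deduce this equality as a direct consequence of Theorem \ref{thm: equivalent defn of a section of in-para-tc} by applying it term-by-term to a $(G\times G)$-invariant open cover of $E \times_{B/G} E$.

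First I would unwind both definitions. By Definition \ref{def:inv-para-tc}, $\TC^{G}[p\colon E\to B] = \secat_{G\times G}(\Pi)$ is the smallest positive integer $k$ for which $E \times_{B/G} E$ admits a $(G\times G)$-invariant open cover $\{U_1,\dots,U_k\}$ such that each $U_i$ supports a $(G\times G)$-equivariant (homotopy) section of $\Pi \colon E^{I}_B \times_{E/G} E^{I}_B \to E \times_{B/G} E$. On the other hand, $_{\mathbb{k}(E)}\ct_{G\times G}(E \times_{B/G} E)$ is the smallest positive integer $k$ for which there exists a $(G\times G)$-invariant open cover $\{U_1,\dots,U_k\}$ of $E \times_{B/G} E$ whose members are each $(G\times G)$-compressible into $\mathbb{k}(E) = E \times_{E/G} E$.

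Next, I would apply Theorem \ref{thm: equivalent defn of a section of in-para-tc} to each cover member individually. That theorem asserts that, for any $(G\times G)$-invariant subset $U$ of $E \times_{B/G} E$, admitting a $(G\times G)$-equivariant section of $\Pi$ over $U$ is equivalent to $U$ being $(G\times G)$-compressible into $\mathbb{k}(E)$. Applying this equivalence to every $U_i$ in a cover witnessing one invariant produces a cover witnessing the other, and vice versa, so the two minima agree.

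The only small point to address is reconciling homotopy sections (as in the definition of $\secat_{G\times G}$) with strict sections (as in condition (1) of Theorem \ref{thm: equivalent defn of a section of in-para-tc}). Since $\Pi$ is a $(G\times G)$-fibration by Proposition \ref{prop:Pi-is-G2-fibration}, the homotopy lifting property rectifies any $(G\times G)$-equivariant homotopy section over a $(G\times G)$-invariant open set $U_i$ into a strict $(G\times G)$-equivariant section over $U_i$. With this identification no genuine obstacle remains, and the corollary follows immediately.
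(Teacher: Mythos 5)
Your proposal is correct and matches the paper's intent exactly: the corollary is stated as an immediate consequence of Theorem \ref{thm: equivalent defn of a section of in-para-tc}, obtained by applying that equivalence to each member of a $(G\times G)$-invariant open cover. Your extra remark reconciling homotopy sections with strict sections via the homotopy lifting property of the $(G\times G)$-fibration $\Pi$ is a valid and standard point that the paper leaves implicit.
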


\subsection{Properties and bounds}
\label{subsec:prop-bounds}

\begin{proposition}
\label{prop: pullback-ineq for inv-para-tc}
    Suppose $p \colon E \to B$ is a $G$-fibration and $B'$ is a $G$-invariant subset of $B$. 
    If $E' = p^{-1}(B')$ and $p' \colon E' \to B'$ is the $G$-fibration obtained by restriction of $p$, then 
    $$
        \TC^{G}[p' \colon E' \to B'] \leq \TC^{G}[p \colon E \to B].
    $$
    In particular, if $b \in B^G$, then the fibre $F=p^{-1}(b)$ is a $G$-space and
    $$
        \TC^{G}(F) \leq \TC^{G}[p \colon E \to B].
    $$
\end{proposition}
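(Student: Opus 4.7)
The plan is to realize the invariant parametrized fibration $\Pi'$ associated with $p' \colon E' \to B'$ as the $(G \times G)$-equivariant restriction of $\Pi$ over the subspace $E' \times_{B'/G} E' \subseteq E \times_{B/G} E$, and then to produce a parametrized motion planning cover for $\Pi'$ by intersecting one for $\Pi$ with this subspace. Since $B'$ is $G$-invariant in $B$, the inclusion $B' \hookrightarrow B$ descends to an inclusion $B'/G \hookrightarrow B/G$, and similarly $E'/G \hookrightarrow E/G$, so $E' \times_{B'/G} E'$ sits naturally as a $(G \times G)$-invariant subspace of $E \times_{B/G} E$.

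First I would verify the set-theoretic identity
$$
    \Pi^{-1}(E' \times_{B'/G} E') = (E')^{I}_{B'} \times_{E'/G} (E')^{I}_{B'}
$$
as subspaces of $E^I_B \times_{E/G} E^I_B$. The non-trivial direction requires that if $(\gamma,\delta) \in E^I_B \times_{E/G} E^I_B$ with $\gamma(0),\delta(1) \in E'$, then $\gamma$ and $\delta$ take values in $E'$: indeed, $\gamma$ lies in $p^{-1}(b_\gamma)$ for some $b_\gamma \in B$, and $p(\gamma(0)) \in B'$ forces $b_\gamma \in B'$, so the image of $\gamma$ is contained in $p^{-1}(B') = E'$, and similarly for $\delta$; the $E'/G$-condition then holds automatically because $E'$ is $G$-invariant.

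Next, let $k = \TC^G[p \colon E \to B]$ and choose a $(G \times G)$-invariant open cover $\{U_1, \dots, U_k\}$ of $E \times_{B/G} E$ with $(G \times G)$-equivariant homotopy sections of $\Pi$. Using the $(G \times G)$-equivariant homotopy lifting property of $\Pi$, which is guaranteed by \Cref{prop:Pi-is-G2-fibration}, I would first replace each such section by a strict $(G \times G)$-equivariant section $\sigma_i \colon U_i \to E^I_B \times_{E/G} E^I_B$ satisfying $\Pi \circ \sigma_i = i_{U_i}$. Setting $U_i' := U_i \cap (E' \times_{B'/G} E')$ then yields a $(G \times G)$-invariant open cover of $E' \times_{B'/G} E'$, and for each $u \in U_i'$ we have $\sigma_i(u) \in \Pi^{-1}(u) \subseteq (E')^I_{B'} \times_{E'/G} (E')^I_{B'}$ by the identity above. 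Hence $\sigma_i|_{U_i'}$ is a $(G \times G)$-equivariant strict section of $\Pi'$ over $U_i'$, giving $\TC^G[p' \colon E' \to B'] \leq k$. For the \emph{in particular} statement, I would apply this inequality with $B' = \{b\}$ for $b \in B^G$: the singleton is $G$-invariant, $E' = p^{-1}(b) = F$, and by \Cref{prop: special cases of inv-para-tc}(2) the left-hand side equals $\TC^G(F)$.

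The main obstacle I anticipate is the reduction from homotopy sections to strict sections, since without it one would need the $(G \times G)$-homotopy witnessing $\Pi \circ \sigma_i \simeq_{G \times G} i_{U_i}$ to remain inside $E' \times_{B'/G} E'$ upon restriction, which is not automatic. The equivariant homotopy lifting property of $\Pi$ resolves this cleanly: lift the reversed homotopy starting from $\sigma_i$ to obtain a strict section, and everything that follows is then a matter of direct set-theoretic restriction.
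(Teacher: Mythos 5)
Your proof is correct and takes essentially the same route as the paper: the crucial step in both is the observation that a path in $E^I_B$ whose initial (or terminal) point lies in $E'=p^{-1}(B')$ must lie entirely in $E'$, which the paper packages as the statement that the restriction diagram is a pullback of $(G\times G)$-fibrations (then citing a general pullback inequality for $\secat_{G\times G}$) and which you unwind into an explicit intersection-and-restriction of strict sections obtained via the equivariant homotopy lifting property. The \emph{in particular} clause via $B'=\{b\}$ and \Cref{prop: special cases of inv-para-tc}(2) matches the intended argument.
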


\begin{proof}
Note that we have the following commutative diagram
    \[
\begin{tikzcd}
(E')^I_{B'} \times_{E'/G} (E')^I_{B'} \arrow[rr, hook] \arrow[d, "\Psi'"'] &  & E^I_B \times_{E/G} E^I_B \arrow[d, "\Psi"] \\
E' \times_{B'/G}E' \arrow[rr, hook]                                         &  & E \times_{B/G} E,                         
\end{tikzcd}
    \]
    where $\Psi'$ and $\Psi$ are the fibrations corresponding to $p'$ and $p$, respectively.
    We will now show that this diagram is a pullback.
    
    Suppose $Z$ is a topological space with $(G \times G)$-maps $k=(k_1,k_2) \colon Z \to E^I_B \times_{E/G} E^I_B$ and $h = (h_1,h_2) \colon Z \to E' \times_{B'/G} E'$ such that $\Psi \circ k = h$. 
    Then we have
    $$
        k_1(z)(0) = h_1(z) \in E' \quad \text{and} \quad k_2(z)(1) = h_2(z) \in E'.
    $$
    As $k(z) = (k_1(z),k_2(z)) \in E^I_B \times_{E/G} E^I_B$, we have 
    $$
        p(k_1(z)(t))=b_{k_1(z)},\quad p(k_2(z)(t))=b_{k_2(z)}\quad \text{and} \quad k_1(z)(1) = g_{k(z)} \cdot k_2(z)(0)
    $$
    for some $b_{k_1(z)}, b_{k_2(z)} \in B$, $g_{k(z)} \in G$ and for all $t\in I$.

    Note that $b_{k_1(z)} = p(k_1(z)(t)) = p(k_1(z)(0)) = p(h_1(z))$ implies $b_{k_1(z)} \in B'$ since $h_1(z) \in E' = p^{-1}(B')$. 
    Hence, $k_1(z) \in (E')^{I}_{B'}$ since $k_1(z)(t) \in p^{-1}(b_{k_1(z)}) \subset p^{-1}(B') = E'$  for all $t \in I$.
    Similarly, $b_{k_2(z)} \in B'$ and $k_2(z) \in (E')^{I}_{B'}$. 
    Hence, $k_1(z)(1) = g_{k(z)} \cdot k_2(z)(0)$ implies $\mathrm{Im}(k) \subseteq (E')^I_{B'} \times_{E'/G} (E')^I_{B'}$. Hence, the diagram above is a pullback. Then the required inequality
    $$
        \TC^{G}[p' \colon E' \to B'] 
            = \secat_{G\times G}(\Psi') 
            \leq \secat_{G \times G}(\Psi) 
            = \TC^{G}[p \colon E \to B].
    $$
    follows from \cite[Proposition 4.3]{colmangranteqtc}.
\end{proof}

\begin{proposition}
\label{prop: inva-para-tc bounds}
Suppose $G$ is a compact Hausdorff topological group, and let $p \colon E \to B$ be a $G$-fibration.
If $e \in E^G$, then the fibre $F = p^{-1}(p(e))$ is a $G$-space, and 
    $$
        \ct_G(F) \leq \TC^G(F) \leq \TC^G[p \colon E\to B].
    $$ 
Furthermore, 
\begin{enumerate}
\item if $E\times_{B/G}E$ is $(G\times G)$-connected, then 
$$
    \TC^G[p \colon E\to B]
        \leq \ct_{G\times G}(E\times_{B/G} E).  
$$

\item if $G$ is a compact Lie group and $E \times_{B/G} E$ is a connected $(G\times G)$-$\mathrm{CW}$-complex, then
$$
    \ct_{G\times G}(E\times_{B/G} E) \leq \mathrm{dim} \left(\frac{E \times_{B/G} E}{G \times G}\right)+1.
$$
\end{enumerate}
Consequently, if $E \times_{B/G} E$ is $(G\times G)$-connected $(G\times G)$-$\mathrm{CW}$-complex, then 
$$
    \TC^G[p \colon E\to B] \leq \mathrm{dim} \left(\frac{E \times_{B/G} E}{G \times G}\right)+1.
$$
\end{proposition}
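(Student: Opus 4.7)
The first chain of inequalities splits into two assertions. Since $e \in E^G$, we have $p(e) \in B^G$, so the singleton $\{p(e)\}$ is $G$-invariant; applying \Cref{prop: pullback-ineq for inv-para-tc} to the restriction $p|_F \colon F \to \{p(e)\}$ and combining with \Cref{prop: special cases of inv-para-tc}(2) gives $\TC^G(F) \leq \TC^G[p \colon E \to B]$. The inequality $\ct_G(F) \leq \TC^G(F)$ is a standard comparison obtained by fixing $e \in F^G$ and pulling back a $(G \times G)$-invariant motion planning cover of $F \times F$ along the $G$-equivariant embedding $x \mapsto (x, e)$: the resulting $G$-invariant open slices cover $F$, and the path data supplied by sections of $\p$ exhibit each slice as $G$-categorical.

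For part (1), \Cref{cor:inv-para-tc-as-A-cat} identifies $\TC^G[p \colon E \to B] = {}_{\mathbb{k}(E)}\ct_{G \times G}(X)$, where $X := E \times_{B/G} E$. It therefore suffices to prove ${}_{\mathbb{k}(E)}\ct_{G \times G}(X) \leq \ct_{G \times G}(X)$ under the $(G \times G)$-connectivity hypothesis. Given a $(G \times G)$-categorical open cover $\{U_i\}$ of $X$, each inclusion $i_{U_i}$ is $(G \times G)$-homotopic to a map $f_i$ taking values in a single orbit $\mathcal{O}_i \subseteq X$. Since $\Delta(E) \subseteq \mathbb{k}(E)$ is nonempty and $X^H$ is path-connected for every closed subgroup $H \leq G \times G$, one may produce a further $(G \times G)$-equivariant homotopy sliding $f_i$ into $\mathbb{k}(E)$ by choosing a path in an appropriate fixed-point set from a basepoint of $\mathcal{O}_i$ to a point of $\mathbb{k}(E)$ and transporting it along the orbit.

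For part (2), the bound $\ct_{G \times G}(X) \leq \dim(X/(G \times G)) + 1$ for a connected $(G \times G)$-CW complex $X$ is the standard equivariant Lusternik-Schnirelmann upper bound: the $(G \times G)$-cellular structure of $X$ descends to a CW structure on the orbit space of equal dimension, and a covering of the orbit space by $\dim(X/(G \times G)) + 1$ open sets (one for each cellular dimension, each a disjoint union of open neighbourhoods of the open cells that deformation retract to a discrete set) pulls back to a $(G \times G)$-categorical open cover of $X$. The stated consequence follows immediately by combining (1) and (2).

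The principal obstacle is the deformation step in part (1): producing a genuinely $(G \times G)$-equivariant homotopy sliding $\mathcal{O}_i$ into $\mathbb{k}(E)$ is delicate, relying on both the path-connectedness of the fixed-point set $X^{\mathrm{Stab}(x_i)}$ (guaranteed by $(G \times G)$-connectivity) and the nonemptiness of $\mathbb{k}(E) \cap X^{\mathrm{Stab}(x_i)}$ for the relevant stabilizers; the remainder of the argument merely assembles previously established results.
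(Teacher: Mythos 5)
Your overall structure matches the paper's, but you take a slightly different route in part (1) and you leave its key point unresolved. The paper proves (1) by observing that the constant paths give a $(G\times G)$-fixed point $(c_e,c_e)$ in the total space $E^I_B\times_{E/G}E^I_B$, and then invokes the standard fact that a $G$-fibration with $G$-connected base and a fixed point in the total space satisfies $\secat_{G}\leq \ct_{G}$ of the base (\cite[Proposition 4.4]{colmangranteqtc}); you instead pass through \Cref{cor:inv-para-tc-as-A-cat} and try to show $_{\mathbb{k}(E)}\ct_{G\times G}(X)\leq \ct_{G\times G}(X)$ directly. That is a legitimate alternative, but the step you yourself identify as the ``principal obstacle'' --- the nonemptiness of $\mathbb{k}(E)\cap X^{H}$ for the relevant stabilizers $H\leq G\times G$ --- is exactly where the standing hypothesis $e\in E^G$ enters, and you never discharge it. The fix is one line: $e\in E^G$ gives $(e,e)\in \Delta(E)\subseteq \mathbb{k}(E)$ with $(e,e)\in X^{G\times G}\subseteq X^{H}$ for every $H$, so every fixed-point set meets $\mathbb{k}(E)$ and the $(G\times G)$-connectivity lets you compress each categorical set all the way to the single orbit $\{(e,e)\}$. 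Without making that observation explicit, part (1) is not proved; note also that this same fixed point is what licenses the dimension bound in part (2) (the paper cites \cite[Corollary 1.12]{M}, which needs connectivity together with a fixed orbit to glue the discrete retracts of your cellular cover into single orbits --- your sketch of ``deformation retract to a discrete set'' stops one step short of ``$(G\times G)$-categorical'' for the same reason). The first chain of inequalities is handled exactly as in the paper: restriction to the invariant fibre via \Cref{prop: pullback-ineq for inv-para-tc}, plus the standard slice argument for $\ct_G(F)\leq \TC^G(F)$ (which the paper simply cites).
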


\begin{proof}
If $e \in E^G$, then $b = p(e) \in B^G$. 
Hence, by \Cref{prop: pullback-ineq for inv-para-tc}, $F := p^{-1}(b)$ admits a $G$-action and 
$
        \TC^G(F)  \leq \TC^G[p \colon E\to B].
$
Observe that $e \in F^G$. 
Therefore, the inequality $\ct_G(F)\leq \TC^G(F)$ follows from \cite[Proposition 2.7]{ZK}.

(1) Note that if $c_e$ is the constant path in $E$ which takes the value $e \in E$, then $(c_e,c_e) \in (E^I_B \times_{E/G} E^I_B)^{(G\times G)}$. 
Moreover, since $E\times_{B/G}E$ is $(G\times G)$-connected, it follows that 
$$
    \TC^G[p \colon E\to B]
        = \secat_{G \times G}(\Psi)
        \leq \ct_{G\times G}(E\times_{B/G} E).  
$$
by \cite[Proposition 4.4]{colmangranteqtc}. 

(2) Since $E \times_{B/G} E$ is connected and $(e, e) \in (E \times_{B/G} E)^{(G \times G)}$, it follows that
    $$
        \ct_{G\times G}(E\times_{B/G} E) 
            \leq \mathrm{dim} \left(\frac{E \times_B E}{G \times G}\right) + 1,
    $$
by \cite[Corollary 1.12]{M}. 
Now the last inequality follows from $(1)$ and $(2)$.
\end{proof}

\begin{corollary}\label{cor: G-contratibility-of-fibre}
Suppose $G$ is a compact Hausdorff topological group, and let $p \colon E\to B$ be a $G$-fibration such that $\TC^G[p \colon E \to B] = 1$. 
If $e \in E^G$, then the fibre $F = p^{-1}(p(e))$ is a $G$-contractible space.
\end{corollary}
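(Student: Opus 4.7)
The plan is to extract $G$-contractibility of $F$ directly from the chain of inequalities already established in \Cref{prop: inva-para-tc bounds}. Since $e \in E^G$ forces $p(e) \in B^G$, the fibre $F = p^{-1}(p(e))$ inherits a $G$-action and contains $e \in F^G$, so the hypothesis of \Cref{prop: inva-para-tc bounds} is satisfied and yields
\[
    \ct_G(F) \leq \TC^G(F) \leq \TC^G[p \colon E \to B] = 1.
\]
Because $\ct_G$ is a positive integer, this pins down $\ct_G(F) = 1$.

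The next step is to unpack this equality. A $G$-categorical cover of $F$ of size one is forced to consist of $F$ itself, so $F$ is $G$-categorical as a subset of itself; hence the identity $\mathrm{id}_F$ is $G$-homotopic to a $G$-map $c \colon F \to F$ whose image lies in a single $G$-orbit $G \cdot x_0 \subseteq F$. I would pick a $G$-homotopy $H \colon F \times I \to F$ with $H_0 = \mathrm{id}_F$ and $H_1 = c$, and then aim to show that $c$ is in fact a constant map at a fixed point.

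The key observation, and the only genuinely new input, is that the presence of $e \in F^G$ collapses the target orbit to a point. Because $H_1$ is $G$-equivariant and $e$ is $G$-fixed, for every $g \in G$ we have $g \cdot H_1(e) = H_1(g \cdot e) = H_1(e)$, so $H_1(e) \in F^G$. But then $H_1(e) \in G \cdot x_0$ is a fixed point, and any orbit that meets $F^G$ must be a singleton: writing $H_1(e) = g_0 \cdot x_0$ gives $G \cdot x_0 = G \cdot H_1(e) = \{H_1(e)\}$. Therefore $H_1$ is the constant map at the fixed point $H_1(e)$, and $H$ witnesses a $G$-homotopy from $\mathrm{id}_F$ to a constant $G$-map, which is precisely the assertion that $F$ is $G$-contractible. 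I do not anticipate any serious obstacle beyond this orbit-collapsing step, which is the only place where the hypothesis $e \in E^G$ is genuinely used.
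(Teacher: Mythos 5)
Your proposal is correct and follows essentially the same route as the paper: apply \Cref{prop: inva-para-tc bounds} to get $\ct_G(F) \leq \TC^G(F) \leq \TC^G[p\colon E \to B] = 1$, hence $\ct_G(F) = 1$, and conclude $G$-contractibility. The paper's proof stops at ``$\ct_G(F)=1$, i.e., $F$ is $G$-contractible,'' whereas your orbit-collapsing argument via the fixed point $e \in F^G$ is a valid (and welcome) elaboration of why a categorical cover by $F$ itself yields a $G$-contraction to a genuine fixed point rather than merely a compression onto an orbit.
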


\begin{proof}
By \Cref{prop: inva-para-tc bounds}, we have $\ct_G(F)=1$, i.e., $F$ is $G$-contractible.
\end{proof}

We now establish sufficient conditions for $\TC^G[p\colon E\to B]$ to be $1$. 
This serves as a converse of \Cref{cor: G-contratibility-of-fibre}.

\begin{theorem}
\label{thm: suff-cond for inv-para-tc = 1 with fixed point}
Suppose $G$ is a compact Hausdorff topological group.
Suppose $p \colon E\to B$ is a $G$-fibration such that $E \times_{B/G} E$ is a $G$-$\mathrm{CW}$-complex. 
Let $e \in E^G$.
If the fibre $F = p^{-1}(p(e))$ satisfies either
\begin{itemize}
    \item $F$ is $G$-connected, $G$-contractible and $F^G = \{e\}$, or
    \item $F$ strongly $G$-deformation retracts to the point $e$, 
\end{itemize}
then $\TC^G[p \colon E \to B] = 1$.
\end{theorem}

\begin{proof}
    Note that 
    $$
        \Psi^{-1}(e,e) 
            = \{(\alpha,\beta) \in E^I_B \times E^I_B \mid \alpha(0) = \beta(1) = e, \alpha(1) = g \cdot \beta(0) \text{ for some } g \in G\}.
    $$
    Since $\alpha(0) = \beta(1) = e$ and $\alpha,\beta \in E^I_B$, it follows that the fibre $\Psi^{-1}(e,e)$ is $(G \times G)$-homeomorphic to
    $$
        \mathcal{F} = \{\gamma \in F^I \mid \gamma(1/2) = e, \gamma(0) = g \cdot \gamma(1) \},
    $$
    where $(G\times G)$-action on $\mathcal{F}$ is given by 
    $$
        ((g_1,g_2)\cdot \gamma)(t) 
        =   \begin{cases}
                g_1 \cdot \gamma(t) & 0 \leq t \leq 1/2,\\
                g_2 \cdot \gamma(t) & 1/2 \leq t \leq 1.
            \end{cases}
    $$
    This action is well-defined since $\gamma(1/2) = e \in E^G$.
    
    Suppose $F$ is $G$-connected, $G$-contractible and $F^G = \{e\}$.
    Since $F$ is $G$-connected, we have $_{\{e\}}\ct_G(F) = \ct_G(F)$, see \cite[Remark 2.3]{invarianttc} and \cite[Lemma 3.14]{colmangranteqtc}. 
    Hence, $_{\{e\}}\ct_G(F) = 1$ as $F$ is $G$-contractible.
    Thus, there exists a $G$-homotopy $H \colon F \times I \to F$ such that $H(f,0)=f$ and $H(f,1)=e$ for all $f\in F$. 
    Let $K \colon F^I \times I \to F^I$ be the homotopy given by $K(\delta,t)(s) = H(\delta(s),t)$ for all $s,t \in I$ and $\delta \in F^I$. 
    Note that $K$ is a $G$-homotopy.
    If $\gamma \in \mathcal{F}$, then
    $$
        g \cdot K(\gamma,t)(1/2) = g \cdot H(\gamma(1/2),t) = g \cdot H(e,t) = H(g \cdot e, t) = H(e,t)
    $$
    for all $g \in G$, i.e., $K(\gamma,t)(1/2) \in F^G$.
    Since $F^G = \{e\}$, we get $K(\gamma,t)(1/2) = e$ for all $t\in I$.

    Suppose $F$ strongly $G$-deformation retracts to the point $e$, then there exists a $G$-homotopy $H \colon F \times I \to F$ such that $H(f,0)=f$ and $H(f,1)=e$ and $H(e,t)=e$ for all $f\in F$ and $t\in I$. 
    Then the homotopy $K$ defined on $F^I$ as above satisfies $K(\gamma,t)(1/2) =e$ due to the condition $H(e,t) =e$ for all $t \in I$.

    Moreover,
    $$
        K(\gamma,t)(0) = H(\gamma(0),t) = H(g \cdot \gamma(1),t) = g \cdot H(\gamma(1),t) = g \cdot K(\gamma,t)(1)
    $$
    where $\gamma(0) = g \cdot \gamma(1)$. 
    Hence, if $\gamma \in \mathcal{F}$, we have $K(\gamma,t) \in \mathcal{F}$. 

    Hence, in both cases, $K$ restricts to a $(G \times G)$-homotopy on $K \colon \mathcal{F} \times I \to \mathcal{F}$ such that $K(\gamma,0) = \gamma$ and $K(\gamma,1) = c_e$, where $c_e$ is the constant path in $E$ taking the value $e$. In particular, $\mathcal{F}$ is $(G\times G)$-contractible. Hence, by equivariant obstruction theory, $\Psi$ admits a $(G \times G)$-section.
\end{proof}

Later, we will also provide sufficient conditions for $\TC^G[p \colon E \to B] = 1$ and its converse when the group action on the base is free, as stated in \Cref{cor: suff-cond for inv-para-tc = 1 with free action} and \Cref{cor: subgroup-ineq for inv-para-tc and inv-para-tc = 1 implies F is contractible with free action}, respectively.

\begin{proposition}
\label{prop: subgroup-ineq for inv-para-tc}
    Suppose $p \colon E \to B$ is a $G$-fibration such that $G$ acts freely on $B$. 
    If $K$ is a subgroup of $G$ such that $p \colon E \to B$ is also a $K$-fibration, then
    $$
    \TC^{K}[p \colon E \to B] \leq \TC^{G}[p \colon E \to B].
    $$
\end{proposition}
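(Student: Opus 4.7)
The plan is to argue by direct restriction: from a $(G\times G)$-equivariant parametrized motion planning cover realizing $\TC^G[p\colon E\to B]$, I will produce a $(K\times K)$-equivariant parametrized motion planning cover of $E\times_{B/K}E$ by intersecting with the smaller fibre product $E\times_{B/K}E\subseteq E\times_{B/G}E$ and then restricting the given sections. The whole argument hinges on the free action of $G$ on $B$, which forces the restricted sections to land inside the smaller homotopy-orbit space $E^I_B\times_{E/K}E^I_B$.

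Concretely, suppose $\TC^G[p\colon E\to B]=n$ and choose a $(G\times G)$-invariant open cover $\{U_1,\dots,U_n\}$ of $E\times_{B/G}E$ together with $(G\times G)$-equivariant sections $s_i\colon U_i\to E^I_B\times_{E/G}E^I_B$ of $\Pi$. Set $V_i:=U_i\cap(E\times_{B/K}E)$. Since each $U_i$ is $(G\times G)$-invariant (hence $(K\times K)$-invariant) and $E\times_{B/K}E$ is $(K\times K)$-invariant, each $V_i$ is a $(K\times K)$-invariant open subset of $E\times_{B/K}E$. Because $E\times_{B/K}E\subseteq E\times_{B/G}E$, the sets $\{V_i\}$ cover $E\times_{B/K}E$.

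The main step is to show that $s_i|_{V_i}$ actually takes values in $E^I_B\times_{E/K}E^I_B$, so that it is a $(K\times K)$-equivariant section of the $K$-invariant parametrized fibration $\Pi_K\colon E^I_B\times_{E/K}E^I_B\to E\times_{B/K}E$. Fix $(e_1,e_2)\in V_i$ and write $s_i(e_1,e_2)=(\gamma,\delta)$. From $\Pi(\gamma,\delta)=(e_1,e_2)$ one has $\gamma(0)=e_1$ and $\delta(1)=e_2$, and from $(\gamma,\delta)\in E^I_B\times_{E/G}E^I_B$ there exists $g\in G$ with $\gamma(1)=g\cdot\delta(0)$. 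Because $\gamma$ and $\delta$ lie in single fibres of $p$, applying $p$ yields $g\cdot p(e_2)=p(e_1)$. On the other hand, $(e_1,e_2)\in E\times_{B/K}E$ gives some $k\in K$ with $k\cdot p(e_2)=p(e_1)$. Freeness of the $G$-action on $B$ forces $g=k\in K$, so $\gamma(1)=k\cdot\delta(0)$ and $(\gamma,\delta)\in E^I_B\times_{E/K}E^I_B$, as required. Since $s_i$ is $(G\times G)$-equivariant, its restriction is $(K\times K)$-equivariant, and we obtain $\TC^K[p\colon E\to B]\leq n=\TC^G[p\colon E\to B]$.

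The main obstacle, and the only place the hypothesis is used, is the identification $g=k$ in the previous paragraph: without the freeness of $G$ on $B$, one only knows $g$ and $k$ agree modulo the stabilizer of $p(e_2)$, and the restricted section could fail to land in $E^I_B\times_{E/K}E^I_B$. Everything else — the invariance of $V_i$, openness, and covering — is formal, and one can alternatively view the argument through \Cref{thm: equivalent defn of a section of in-para-tc}: the same freeness argument shows that any $(G\times G)$-compression of $U_i$ into $\mathbb{k}_G(E)=E\times_{E/G}E$ restricts, on $U_i\cap(E\times_{B/K}E)$, to a $(K\times K)$-compression into $\mathbb{k}_K(E)=E\times_{E/K}E$.
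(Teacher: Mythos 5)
Your proposal is correct and follows essentially the same route as the paper's proof: intersect each $(G\times G)$-invariant open set with $E\times_{B/K}E$, and use freeness of the $G$-action on $B$ to identify the group element $g$ relating $\gamma(1)$ and $\delta(0)$ with the element $k\in K$ relating $p(e_1)$ and $p(e_2)$, so the restricted sections land in $E^I_B\times_{E/K}E^I_B$. The key step and the way the hypothesis is used match the paper exactly.
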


\begin{proof}
    Suppose $\Psi_K \colon E^I_{B} \times_{E/K} E^I_{B} \to E \times_{B/K} E$ is the invariant parametrized fibration corresponding to $K$-fibration $p$. 
    Then the following diagram
    \[
    \begin{tikzcd}
    E^I_{B} \times_{E/K} E^I_{B} \arrow[d, "\Psi_K"'] \arrow[rr, hook] &  & E^I_B \times_{E/G} E^I_B \arrow[d, "\Psi"] \\
    E \times_{B/K} E \arrow[rr, hook]                                         &  & E \times_{B/G} E                         
    \end{tikzcd}
    \]
    is commutative.
    Suppose $U$ is a $(G \times G)$-invariant open subset of $E \times_{B/G} E$ with a $(G \times G)$-equivariant section $s \colon U \to E^I_B \times_{E/G} E^I_B$ of $\Psi$. 
    
    Define $V := U \cap \left(E \times_{B/K} E\right)$. 
    Then $V$ is $(K \times K)$-invariant open subset of $E \times_{B/K} E$.
    Suppose $(e_1,e_2) \in V$ and $s(e_1,e_2) = (\gamma,\delta) \in E^I_B \times_{E/G} E^I_B$. We claim that $s(e_1,e_2) = (\gamma,\delta)$ lies in $E^I_{B} \times_{E/K} E^I_{B}$. 
    Note that $p(e_1) = k \cdot p(e_2)$ for some $k \in K$, as $(e_1,e_2) \in E \times_{B/K} E$.
    Since $s$ is a section of $\Psi$, we have
    $$
    b_\gamma = p(\gamma(0)) = p(e_1) = k \cdot p(e_2) = k \cdot p(\delta(1)) = k \cdot b_\delta,
    $$
    where $\gamma(t) \in p^{-1}(b_\gamma)$ and $\delta(t) \in p^{-1}(b_\delta)$ for some $b_\gamma, b_\delta \in B$ and for all $t \in I$. Since $(\gamma,\delta) \in E^I_B \times_{E/G} E^I_B$, we have $\gamma(1) = g \cdot \delta(0)$ for some $g \in G$. 
    Hence, 
    $$
    b_\gamma = p(\gamma(1)) = p(g \cdot \delta(0)) = g \cdot p(\delta(0)) = g \cdot b_\delta.
    $$

    Thus, we get $g \cdot b_\delta = k \cdot b_\delta$. It follows that $g=k$ since $G$ acts freely on $B$. Thus, $\gamma(1) = k \cdot \delta(0)$ implies $(\gamma,\delta) \in E^I_{B} \times_{E/K} E^I_{B}$. Hence, the restriction $\left.s\right|_{V} \colon V \to E^I_{B} \times_{E/K} E^I_{B}$ is a $(K \times K)$-equivariant section of $\Psi_K$.    
\end{proof}

\begin{corollary}
\label{cor: subgroup-ineq for inv-para-tc and inv-para-tc = 1 implies F is contractible with free action}
Suppose $G$ is a compact Hausdorff topological group. 
If $p \colon E \to B$ is a $G$-fibration such that $G$ acts freely on $B$, then
$$
    \TC^{K}[p \colon E \to B] \leq \TC^{G}[p \colon E \to B]
$$
for all closed subgroups $K$ of $G$. 
In particular,
$$
\TC(F) 
    \leq \TC[p \colon E \to B] 
    \leq \TC^{G}[p \colon E \to B],
$$
where $F$ denotes the fibre of $p$.
Moreover, if $\TC^{G}[p \colon E \to B] = 1$, then $F$ is contractible.
\end{corollary}

\begin{proof}
Note that, by \cite[Theorem 3]{gevorgyan2023equivariant}, the map $p \colon E \to B$ is a $K$-fibration.
Hence, the desired inequalities follow from \Cref{prop: subgroup-ineq for inv-para-tc} and \cite[Page 235]{farber-para-tc}.
The last statement follows from $\TC(F) = 1$ if and only if $F$ is contractible, see \cite[Theorem 1]{FarberTC}.
\end{proof}

\subsubsection{Cohomological Lower Bounds}

\begin{lemma}
\label{lemma: homotopy equivalence of the saturation of the diagonal with total space}
    Suppose $p \colon E \to B$ is a $G$-fibration. Then the map $c \colon E \times_{E/G} E \to E^{I}_B \times_{E/G} E^{I}_B$, given by $c(e_1,e_2)=(c_{e_1},c_{e_2})$ where $c_{e_i}$ is the constant path in $E$ taking the value $e_i \in E$, is a $(G \times G)$-homotopy equivalence.
\end{lemma}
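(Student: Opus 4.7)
The plan is to exhibit an explicit $(G\times G)$-equivariant homotopy inverse to $c$ and interpolate between the two compositions using a path-shortening reparametrization very similar to the one used in the proof of \Cref{thm: equivalent defn of a section of in-para-tc}.

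Define $r \colon E^{I}_B \times_{E/G} E^{I}_B \to E \times_{E/G} E$ by $r(\gamma,\delta) = (\gamma(1),\delta(0))$. First I would verify that $r$ is well defined: for any $(\gamma,\delta)$ in the domain we have $G\cdot\gamma(1) = G\cdot\delta(0)$ by definition of the fibre product over $E/G$, so $\pi_E(\gamma(1)) = \pi_E(\delta(0))$ and hence $(\gamma(1),\delta(0)) \in E \times_{E/G} E$. Continuity is clear from continuity of endpoint evaluations, and $(G\times G)$-equivariance is immediate since $G$ acts pointwise on paths. A direct computation yields $r \circ c = \mathrm{id}_{E \times_{E/G} E}$ because $(e_1,e_2) \mapsto (c_{e_1},c_{e_2}) \mapsto (c_{e_1}(1), c_{e_2}(0)) = (e_1,e_2)$.

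For the other composition, I would construct a $(G\times G)$-homotopy from $\mathrm{id}$ to $c \circ r$ by
\[
H \colon (E^I_B \times_{E/G} E^I_B) \times I \to E^I_B \times_{E/G} E^I_B, \quad H(\gamma,\delta,t) = (\gamma_t, \delta_t),
\]
where $\gamma_t(s) = \gamma(s + t(1-s))$ and $\delta_t(s) = \delta(s(1-t))$. Note that $p(\gamma_t(s)) = b_\gamma$ and $p(\delta_t(s)) = b_\delta$ for all $s$, so $\gamma_t, \delta_t \in E^I_B$. Moreover $\gamma_t(1) = \gamma(1)$ and $\delta_t(0) = \delta(0)$, so the matching condition $G\cdot \gamma_t(1) = G\cdot \delta_t(0)$ persists for every $t$, showing $H$ takes values in $E^I_B \times_{E/G} E^I_B$. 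At $t=0$ we recover $(\gamma,\delta)$, while at $t=1$ we obtain $(c_{\gamma(1)}, c_{\delta(0)}) = (c\circ r)(\gamma,\delta)$. The reparametrizations commute with the pointwise $G$-action on each coordinate, making $H$ a $(G\times G)$-homotopy.

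The only potential obstacle is ensuring that the chosen reparametrizations preserve the defining conditions of the fibre product $E^I_B \times_{E/G} E^I_B$ simultaneously on both factors; this is precisely why the reparametrization $\gamma_t$ fixes the endpoint $s=1$ and $\delta_t$ fixes the endpoint $s=0$, so that the matching constraint $G\cdot\gamma(1) = G\cdot\delta(0)$ is preserved throughout the homotopy. Once this is set up correctly the rest of the argument is routine.
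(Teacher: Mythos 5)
Your proposal is correct and follows essentially the same route as the paper: the same homotopy inverse $(\gamma,\delta)\mapsto(\gamma(1),\delta(0))$ and the same reparametrization homotopy $\gamma_t(s)=\gamma(s+t(1-s))$, $\delta_t(s)=\delta(s(1-t))$, with the key observation that these fix $\gamma(1)$ and $\delta(0)$ so the matching condition over $E/G$ is preserved. No gaps.
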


\begin{proof}
    Let $f \colon E^{I}_B \times_{E/G} E^{I}_B \to E \times_{E/G} E$ be the map given by $f(\gamma,\delta) = (\gamma(1),\delta(0))$. 
    Then $f$ is $(G \times G)$-equivariant such that $(c \circ f) (\gamma,\delta) = (c_{\gamma(1)},c_{\delta(0)})$ and $f \circ c$ is the identity map of $E \times_{E/G} E$. 
    Let $H \colon (E^{I}_B \times_{E/G} E^{I}_B) \times I \to E^{I}_B \times_{E/G} E^{I}_B$ be the homotopy given by
    $$
        H(\gamma,\delta,t) = (\gamma'_t, \delta'_t),
    $$
    where $\gamma'_t(s) = \gamma(s + t(1-s))$ and $\delta'_t(s) = \delta(s(1-t))$. Then following the proof of \Cref{thm: equivalent defn of a section of in-para-tc}, we see that $H$ is well-defined, $(G\times G)$-equivariant, $H(\gamma,\delta,0)=(\gamma,\delta)$, and $H(\gamma,\delta,1)=(c_{\gamma(1)},c_{\delta(0)})$. Hence, $c \circ f$ is $(G \times G)$-homotopic to the identity map of $E^{I}_B \times_{E/G} E^{I}_B$.
\end{proof}

Note that the following diagram
\[
\begin{tikzcd}
E \times_{E/G} E  \arrow[rr, "c"] \arrow[rd, "i"', hook] &                 & E^{I}_B \times_{E/G} E^{I}_B \arrow[ld, "\Psi"] \\
                                                         & E \times_{B/G}E &                                                
\end{tikzcd}
\]
is commutative, where $i \colon E \times_{E/G} E \hookrightarrow E \times_{B/G} E$ is the inclusion map. 
In other words, $\Psi$ is a $(G \times G)$-fibrational substitute for the $(G \times G)$-map $i$. 

For ease of notation in the upcoming theorem, let $G^2$ denote the product $G \times G$. 

\begin{theorem}
\label{thm: cohomological lower bound for inv-para-tc}
    Suppose $p \colon E \to B$ is a $G$-fibration. Suppose there exists cohomology classes $u_1,\dots,u_k \in \widetilde{H}^{*}_{G^2}(E \times_{B/G} E;R)$ (for any commutative ring R) such that
    $$
        (i^h_{G^2})^*(u_1) = \cdots = (i^h_{G^2})^*(u_k) = 0 \quad \text{and} \quad u_1 \smile \cdots \smile u_k \neq 0,
    $$
    then $\TC^G[p \colon E \to B] >k$.
\end{theorem}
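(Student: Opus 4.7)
The plan is to reduce the assertion to the cohomological lower bound for equivariant sectional category established earlier in \Cref{thm: cohomological lower bound for eq-secat}, applied to the $(G\times G)$-fibration $\Pi \colon E^{I}_B \times_{E/G} E^{I}_B \to E \times_{B/G} E$ in place of the generic $G$-map $p \colon E \to B$ there. By definition of invariant parametrized topological complexity, $\TC^G[p\colon E \to B] = \secat_{G\times G}(\Pi)$, so the conclusion $\TC^G[p\colon E \to B] > k$ is equivalent to $\secat_{G\times G}(\Pi) > k$.

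To invoke \Cref{thm: cohomological lower bound for eq-secat}, I need cohomology classes $u_1,\dots,u_k \in \widetilde{H}^*_{G^2}(E\times_{B/G}E;R)$ whose pullbacks under the Borel construction $(\Pi^h_{G^2})^*$ vanish and whose cup product is nonzero. The hypothesis provides classes whose pullbacks vanish under $(i^h_{G^2})^*$ instead, so the key step is to show that for the inclusion $i\colon E\times_{E/G} E \hookrightarrow E\times_{B/G}E$, the condition $(i^h_{G^2})^*(u_j) = 0$ implies $(\Pi^h_{G^2})^*(u_j) = 0$. The commutative triangle displayed just before the theorem statement gives the factorization $i = \Pi \circ c$, where $c$ is the $(G\times G)$-homotopy equivalence of \Cref{lemma: homotopy equivalence of the saturation of the diagonal with total space}. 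Applying the Borel construction (which is functorial and preserves $(G\times G)$-homotopy equivalences) yields $i^h_{G^2} = \Pi^h_{G^2} \circ c^h_{G^2}$, where $c^h_{G^2}$ is a homotopy equivalence. Passing to cohomology we obtain $(i^h_{G^2})^* = (c^h_{G^2})^* \circ (\Pi^h_{G^2})^*$ with $(c^h_{G^2})^*$ an isomorphism, so $(i^h_{G^2})^*(u_j) = 0$ forces $(\Pi^h_{G^2})^*(u_j) = 0$ for each $j$.

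Once this equivalence of vanishing conditions is established, the hypotheses of \Cref{thm: cohomological lower bound for eq-secat} are satisfied (applied to the $(G\times G)$-fibration $\Pi$ with base $E\times_{B/G}E$), and the conclusion $\secat_{G\times G}(\Pi) > k$ follows immediately; rewriting this as $\TC^G[p\colon E\to B] > k$ completes the argument. I do not anticipate a genuine obstacle: the whole proof is the observation that the cohomological obstruction for $\Pi$ can be pulled back from obstructions along the equivalent inclusion $i$, combined with a direct application of the earlier theorem. The only point requiring care is citing that the Borel construction sends $(G\times G)$-homotopy equivalences to ordinary homotopy equivalences, so that $(c^h_{G^2})^*$ is indeed an isomorphism.
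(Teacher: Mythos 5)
Your proposal is correct and follows essentially the same route as the paper: factor $i = \Pi\circ c$, use \Cref{lemma: homotopy equivalence of the saturation of the diagonal with total space} to conclude $(c^h_{G^2})^*$ is an isomorphism, deduce $(\Pi^h_{G^2})^*(u_j)=0$ from $(i^h_{G^2})^*(u_j)=0$, and apply \Cref{thm: cohomological lower bound for eq-secat} to the $(G\times G)$-fibration $\Pi$. No gaps.
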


\begin{proof}
Note that $\Psi \circ c = i$ implies $(c^h_{G^2})^* \circ (\Psi^h_{G^2})^* = (i^h_{G^2})^*$. 
Since $c$ is a $(G \times G)$-homotopy equivalence (see \Cref{lemma: homotopy equivalence of the saturation of the diagonal with total space}), it follows $c^h_{G^2}$ is a homotopy equivalence. 
Hence, $(c^h_{G^2})^*$ is an isomorphism. 
Thus, the result follows from \Cref{thm: cohomological lower bound for eq-secat}.
\end{proof}


Now we give a non-equivariant cohomological lower bound for the invariant parametrized topological complexity.
Let $E_{B,G^2} := (E \times_{B/G} E)/(G \times G)$ and let $\mathbb{k}_{G^2}E \subseteq E_{B,G^2}$ denote the image of the saturated diagonal subspace $ \mathbb{k}(E) = E \times_{E/G} E \subseteq E \times_{B/G} E$ under the orbit map $\rho \colon E \times_{B/G} E \to E_{B,G^2}$. 
By using \Cref{thm: equivalent defn of a section of in-para-tc} and following the arguments in \Cref{thm: non-equi coho-lower-bound for equi-para-tc}, one can establish the following theorem. 
The proof is left to the reader.

\begin{theorem}
\label{thm: non-equi coho-lower-bound for inv-para-tc}
    Suppose $p \colon E \to B$ is a $G$-fibration. 
    If there exists cohomology classes $u_1,\dots,u_k \in H^*(E_{B,G^2};R)$ (for any commutative ring $R$) such that
    \begin{enumerate}
        \item $u_i$ restricts to zero in $H^*(\mathbb{k}_{G^2}E;R)$ for $i=1,\dots,k$;
        \item $u_1 \smile \dots \smile u_k \neq 0$ in $H^*(E_{B,G^2};R)$, 
    \end{enumerate}
    then $\TC^G[p \colon E \to B]>k$.
\end{theorem}

\subsubsection{Product Inequalities}

\begin{theorem}
\label{thm: prod-ineq for inv-para-tc}
Let $p_1 \colon E_1 \to B_1$ be a $G_1$-fibration and $p_2 \colon E_2 \to B_2$ be a $G_2$-fibration. 
If $E_1 \times E_1 \times E_2 \times E_2$ is $(G_1 \times G_1 \times G_2 \times G_2)$-completely normal, then 
\[ 
    \TC^{G_1 \times G_2}[p_1\times p_2 \colon E_1\times E_2\to B_1\times B_2]
        \leq \TC^{G_1}[p_1 \colon E_1 \to B_1] + \TC^{G_2}[p_2 \colon E_2\to B_2]-1.
\]
\end{theorem}

\begin{proof}
    Let $\Psi_1 \colon (E_1)^I_{B_1} \times_{E_1/G_1} (E_1)^I_{B_1} \to E_1 \times_{B_1/G_1} E_1$ and $\Psi_2 \colon (E_2)^{I}_{B_2} \times_{E_2/G_2} (E_2)^I_{B_2} \to E_2 \times_{B_2/G_2} E_2$ be the invariant parametrized fibrations corresponding to $p_1$ and $p_2$, respectively. If $E := E_1 \times E_2$, $B := B_1 \times B_2$, $G := G_1 \times G_2$, and $p := p_1 \times p_2$ is the product $G$-fibration, then it easily checked that
    $$
        E^I_B \times_{E/G} E^I_B 
            = \left((E_1)^I_{B_1} \times_{E_1/G_1} (E_1)^I_{B_1}\right) \times \left((E_2)^{I}_{B_2} \times_{E_2/G_2} (E_2)^I_{B_2}\right),
    $$ 
    and
    $$
        E \times_{B/G} E = \left(E_1 \times_{B_1/G_1} E_1 \right) \times \left(E_2 \times_{B_2/G_2} E_2\right),
    $$
    and the invariant parametrized fibration $\Psi \colon E^I_B \times_{E/G} E^I_B \to E \times_{B/G} E$ corresponding to $p$ is equivalent to the product fibration $\Psi_1 \times \Psi_2$.
    Hence,
    \begin{align*}
        \TC^{G_1 \times G_2}[p_1 \times p_2 \colon E_1 \times E_2 \to B_1 \times B_2] 
            & = \secat_{(G_1 \times G_2) \times (G_1 \times G_2)}(\Psi) \\
            & = \secat_{(G_1 \times G_1) \times (G_2 \times G_2)}(\Psi_1 \times \Psi_2) \\
            & \leq \secat_{G_1 \times G_1}(\Psi_1) + \secat_{G_2 \times G_2}(\Psi_2) - 1\\
            & = \TC^{G_1}[p_1 \colon E_1 \to B_1]+\TC^{G_2}[p_2 \colon E_2\to B_2] -1,
    \end{align*}
    by \cite[Proposition 3.7]{A-D-S}.
\end{proof}

The product inequality for invariant topological complexity was proved in \cite[Theorem 3.18]{invarianttc}. In the following corollary, we show that the cofibration hypothesis assumed in \cite[Theorem 3.18]{invarianttc} can be removed by using \Cref{thm: prod-ineq for inv-para-tc}.

\begin{corollary}
    Suppose $X$ is a $G$-space and $Y$ is a $H$-space. If $X \times X \times Y \times Y$ is $(G \times G \times H \times H)$-completely normal, then
    $$
        \TC^{G \times H}(X \times Y) \leq \TC^{G}(X) + \TC^H(Y) - 1.
    $$
\end{corollary}
    
\begin{proof}
    Note that $X \to \{*_1\}$ is a $G$-fibration and $Y \to \{*_2\}$ is a $H$-fibration. 
    Hence,
    \begin{align*}
        \TC^{G \times H}(X \times Y) 
            & = \TC^{G \times H}[X\times Y \to \{*_1\} \times \{*_2\}] \\ 
            & \leq \TC^{G}[X \to \{*_1\}] + \TC^H[Y \to \{*_2\}] - 1 \\
            & = \TC^{G}(X) + \TC^H(Y) - 1,
    \end{align*}
    by \Cref{prop: special cases of inv-para-tc} and \Cref{thm: prod-ineq for inv-para-tc}.
\end{proof}

The proof of the following corollary is similar to \Cref{cor: special-prod-ineq for equi-para-tc} and can be shown using \Cref{cor: subgroup-ineq for inv-para-tc and inv-para-tc = 1 implies F is contractible with free action} and \Cref{thm: prod-ineq for inv-para-tc}. 

\begin{corollary}
\label{cor: special-prod-ineq for inv-para-tc}
    Suppose $p_i \colon E_i \to B_i$ is a $G$-fibration such that $G$ acts on $B_i$ freely for $i = 1,2$. 
    If $G$ is compact Hausdorff, then $p_1\times p_2 \colon E_1\times E_2\to B_1\times B_2$ is a $G$-fibration, where $G$ acts diagonally on the spaces $E_1 \times E_2$ and $B_1 \times B_2$.
Furthermore, if 
$E_1 \times E_1 \times E_2 \times E_2$ is completely normal, then
    \[ 
    \TC^{G}[p_1\times p_2 \colon E_1\times E_2\to B_1\times B_2]
        \leq \TC^{G}[p_1 \colon E_1 \to B_1] + \TC^{G}[p_2 \colon E_2\to B_2]-1.
    \]
\end{corollary}

The proof of the following corollary is similar to that of \Cref{cor: equi-para-tc of pullback fibration under diagonal map} and follows from \Cref{prop: pullback-ineq for inv-para-tc} and \Cref{cor: special-prod-ineq for inv-para-tc}.

\begin{corollary}
    Suppose $p_i \colon E_i \to B$ is a $G$-fibration, for $i =1,2$, such that $G$ acts on $B$ freely. 
    Let $E_1 \times_B E_2 = \{(e_1,e_2) \in E_1 \times E_2 \mid p_1(e_1) = p_2(e_2)\}$ and let $p\colon  E_1 \times_B E_2 \to B$ be the $G$-map given by $p(e_1,e_2)=p_1(e_1)=p_2(e_2)$, where $G$ acts on $E_1 \times_B E_2$ diagonally. 
    If $G$ is compact Hausdorff, then $p$ is a $G$-fibration. 
Furthermore, if 
$E_1 \times E_1 \times E_2 \times E_2$ is completely normal, then
    \[
        \TC^{G}[p \colon E_1 \times_B E_2 \to B]
            \leq \TC^G[p_1 \colon E_1 \to B]+\TC^G[p_2 \colon E_2\to B]-1.
    \]
\end{corollary}

\subsection{Some technical results}
\hfill\\ \vspace{-0.7em}

In this subsection, we establish two technical results which will help us compute the invariant parametrized topological complexity of Fadell-Neuwirth fibrations in \Cref{sec: examples}.

\begin{definition}
Suppose $p\colon E \to B$ is a $G$-map and $F$ is a $G$-space. We say that $p$ is a locally trivial $G$-fibration with fibre $F$ if for each point $b \in B$ there exists a $G$-invariant open subset $U$ containing $b$ and a $G$-equivariant homeomorphism $\phi: p^{-1}(U) \to U \times F$ such that the following diagram
\[
    \begin{tikzcd}
    p^{-1}(U) \arrow[rr, "\phi"] \arrow[rd, "p"'] &   & U \times F \arrow[ld, "\pi_1"] \\
                                                & U &                               
    \end{tikzcd}
\]
commutes, where $G$ acts on $U \times F$ diagonally. The map $\phi$ is called a $G$-trivialization of $p$.
\end{definition}

\begin{proposition}
\label{prop: quotient-fibration}
    Suppose $p \colon E \to B$ is a locally trivial $G$-fibration with fibre $F$. 
    If $G$ acts trivially on $F$, then the induced map $\overline{p} \colon \overline{E} \to \overline{B}$ between the orbit spaces is locally trivial with fibre $F$. 
\end{proposition}

\begin{proof}
    Suppose $\phi \colon p^{-1}(U) \to U \times F$ is a $G$-trivialization of $p$ over $U$. 
    As the quotient map $\pi_B \colon B \to \overline{B}$ is open, it follows $\overline{U} := \pi_B(U)$ is an open subset of $\overline{B}$. 
    Further, $U$ is $G$-invariant implies $U$ is saturated with respect to $\pi_B$.
    Hence, the restriction $\left.\pi_B\right|_{U} \colon U \to \overline{U}$ is an open quotient map and so is the product map $\left(\left.\pi_E\right|_{U}\right) \times \mathrm{id}_F \colon U \times F \to \overline{U} \times F$. 
    Hence, the induced natural map $(U \times F)/G \to \overline{U} \times F$ is a homeomorphism. 
    If $(\overline{p^{-1}(U)}) :=  \pi_E(p^{-1}(U))$, then $(\overline{p^{-1}(U)}) = (\overline{p})^{-1}(\overline{U})$ since $U$ is $G$-invariant. 
    Similarly, $\left.\pi_E\right|_{p^{-1}(U)} \colon p^{-1}(U) \to (\overline{p})^{-1}(\overline{U})$ is an open quotient map, and the induced natural map $p^{-1}(U)/G \to (\overline{p})^{-1}(\overline{U})$ is a homeomorphism.
    Hence, the homeomorphism $\phi/G \colon p^{-1}(U)/G \to (U \times F)/G$ induced by $\phi$ gives a trivialization 
    $$
    \overline{\phi} \colon (\overline{p})^{-1}(\overline{U}) \to \overline{U} \times F
    $$
    for $\overline{p}$ over $\overline{U}$. 
    As $p$ is surjective, it follows $\overline{p}$ is locally trivial with fibre $F$. 
\end{proof}

    As noted in \Cref{thm: secat(overline(p)) = secat_G(p) for free actions}, the induced map $\overline{p} \colon \overline{E} \to \overline{B}$ is a $G$-fibration when $G$ is a compact Hausdorff topological group.
    However, to compute the invariant parametrized topological complexity of the equivariant Fadell-Neuwirth fibration, defined in \cite{D-EqPTC}, we require \Cref{prop: quotient-fibration}, which says $\overline{p}$ is also locally trivial.
    We will introduce equivariant Fadell-Neuwirth fibrations and calculate their invariant parametrized topological complexity in \Cref{sec: examples}. 
    Now, we present one more result which will be required in \Cref{sec: examples}.

\begin{proposition}
\label{prop: homotopy dimension of total space}
    Suppose $p \colon E \to B$ is a fibre bundle with fibre $F$, where the spaces $E,B,F$ are $\mathrm{CW}$-complexes. Then 
    $$
        \mathrm{hdim}(E) \leq \mathrm{hdim}(B) + \dim(F).
    $$
\end{proposition}

\begin{proof}
    Since $p$ is locally trivial, it follows that $\dim(E) \leq \dim(B) + \dim(F)$. 
    In particular, $\mathrm{hdim}(E) \leq \dim(B) + \dim(F)$.
    If $h \colon B' \to B$ is a homotopy equivalence and $E'$ is the pullback of $E$ along $h$, then $E'$ is a fibre bundle over $B'$ with fibre $F$. 
    Thus, we have $\dim(E') \leq \dim(B') + \dim(F)$.
    Note that $E'$ is homotopy equivalent to $E$ as $h$ is a homotopy equivalence.
    Hence, we get $\mathrm{hdim}(E) \leq \dim(B') + \dim(F)$ and the result follows.
\end{proof}
\subsection{Invariance Theorem} 
\hfill\\ \vspace{-0.7em}
\label{subsec:invariance-thm}

Suppose $p\colon E \to B$ is a $G$-fibration such that the induced map
$\overline{p} \colon \overline{E} \to \overline{B}$ between the orbit spaces is a fibration. 
If $\overline{\Pi} \colon (\overline{E})^{I}_{\overline{B}} \to \overline{E} \times_{\overline{B}} \overline{E}$ is the parametrized fibration induced by $\overline{p} \colon \overline{E} \to \overline{B}$, then we have a commutative diagram
\[
\begin{tikzcd}
E^I_B \times_{E/G} E^I_B \arrow[rr, "\Psi"] \arrow[d,swap,"f"] &  & E \times_{B/G} E \arrow[d,"\pi_E \times \pi_E"] \\
(\overline{E})^{I}_{\overline{B}} \arrow[rr,"\overline{\Pi}"]                                    &  & \overline{E} \times_{\overline{B}} \overline{E},
\end{tikzcd}
\]
where $f(\gamma,\delta) = \overline{\gamma}*\overline{\delta}$, where $\overline{\gamma} = \pi_E \circ \gamma$.

\begin{lemma}
\label{lemma: pi_E x pi_E is an open quotient map}  
    The restriction $\pi_E \times \pi_E \colon E \times_{B/G} E \to \overline{E} \times_{\overline{B}} \overline{E}$ is an open quotient map.
\end{lemma}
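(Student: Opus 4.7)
The plan is to verify three properties of the restricted map: surjectivity onto $\overline{E} \times_{\overline{B}} \overline{E}$, openness, and the quotient property (the last follows automatically from the first two together with continuity).

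The starting point is that the orbit map $\pi_E \colon E \to \overline{E}$ is an open quotient map (a standard fact for topological group actions), and therefore the product $\pi_E \times \pi_E \colon E \times E \to \overline{E} \times \overline{E}$ is open, as the product of open maps. I would then record the key saturation identity
\[
(\pi_E \times \pi_E)^{-1}\bigl(\overline{E} \times_{\overline{B}} \overline{E}\bigr) = E \times_{B/G} E,
\]
which is immediate from the equivalence $(e_1,e_2) \in E \times_{B/G} E \iff \pi_B(p(e_1)) = \pi_B(p(e_2)) \iff \overline{p}(\pi_E(e_1)) = \overline{p}(\pi_E(e_2))$. Surjectivity of the restriction onto $\overline{E} \times_{\overline{B}} \overline{E}$ follows at once, since for any $(\overline{e_1},\overline{e_2})$ in the codomain, any lift $(e_1,e_2) \in E \times E$ automatically lands in $E \times_{B/G} E$ by the saturation identity.

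For openness, let $W$ be open in $E \times_{B/G} E$, and write $W = V \cap (E \times_{B/G} E)$ for some open $V \subseteq E \times E$. I would show
\[
(\pi_E \times \pi_E)(W) = (\pi_E \times \pi_E)(V) \cap \bigl(\overline{E} \times_{\overline{B}} \overline{E}\bigr).
\]
The inclusion $\subseteq$ is clear; for $\supseteq$, any $(\overline{e_1},\overline{e_2})$ in the right-hand side has a preimage $(e_1',e_2') \in V$, and since $(\overline{e_1},\overline{e_2}) \in \overline{E} \times_{\overline{B}} \overline{E}$ the saturation identity places $(e_1',e_2')$ in $E \times_{B/G} E$, hence in $W$. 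Because $(\pi_E \times \pi_E)(V)$ is open in $\overline{E} \times \overline{E}$ and $\overline{E} \times_{\overline{B}} \overline{E}$ carries the subspace topology, the displayed equality exhibits $(\pi_E \times \pi_E)(W)$ as open in $\overline{E} \times_{\overline{B}} \overline{E}$.

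Finally, the restricted map is continuous (as the restriction of a continuous map to subspaces on both sides), surjective, and open, and any such map is automatically a quotient map. There is no serious obstacle here; the only subtle point is bookkeeping the subspace topologies on both fibre products, which is handled cleanly once the saturation identity is in place.
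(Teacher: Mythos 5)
Your proof is correct and follows essentially the same route as the paper: both arguments rest on the openness of the product map $\pi_E \times \pi_E \colon E \times E \to \overline{E} \times \overline{E}$ together with the saturation identity $(\pi_E \times \pi_E)^{-1}\bigl(\overline{E} \times_{\overline{B}} \overline{E}\bigr) = E \times_{B/G} E$. The paper simply invokes the standard fact that an open quotient map restricts to an open quotient map on a saturated subset, whereas you unpack that fact by hand; the content is the same.
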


\begin{proof} 
As $\pi_E \colon E \to \overline{E}$ is an open quotient map, it follows $\pi_E \times \pi_E \colon E \times E \to \overline{E} \times \overline{E}$ is also an open quotient map. 
The subset $E \times_{B/G} E$ of $E \times E$ is saturated with respect to $\pi_E \times \pi_E$, since $E \times_{B/G} E$ is $(G \times G)$-invariant. 
Thus, $\pi_E \times \pi_E \colon E \times_{B/G} E \to (\pi_E \times \pi_E)(E \times_{B/G} E)$ is an open quotient map.
Note that
\begin{align*}
    (\overline{e_1},\overline{e_2}) \in \overline{E} \times_{\overline{B}} \overline{E}
        & \iff \overline{p} (\overline{e_1}) = \overline{p} (\overline{e_2}) \in \overline{B} \\
        & \iff \overline{p(e_1)} = \overline{p(e_2)} \in \overline{B} \\
        & \iff p(e_1) = g \cdot p(e_2) \text{ for some } g \in G\\
        & \iff (e_1,e_2) \in E \times_{B/G} E.
\end{align*}
Hence, the result follows.
\end{proof}

\begin{proposition} 
Suppose $p \colon E \to B$ is a $G$-fibration such that $\overline{p} \colon \overline{E} \to \overline{B}$ is a fibration. Then
    \[
    \TC[\overline{p} \colon \overline{E} \rightarrow \overline{B}]
        \leq \TC^{G}[p \colon E \rightarrow B].
    \]
\end{proposition}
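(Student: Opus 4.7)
The plan is to transfer an invariant parametrized motion planning cover of $E \times_{B/G} E$ into an ordinary parametrized motion planning cover of $\overline{E} \times_{\overline{B}} \overline{E}$, by descending everything along the open quotient map $\pi_E \times \pi_E$ from \Cref{lemma: pi_E x pi_E is an open quotient map}. Let $k = \TC^{G}[p \colon E \to B]$, and fix a $(G \times G)$-invariant open cover $\{U_1,\dots,U_k\}$ of $E \times_{B/G} E$ together with $(G \times G)$-equivariant sections $s_i \colon U_i \to E^I_B \times_{E/G} E^I_B$ of $\Pi$.

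First I would set $\overline{U_i} := (\pi_E \times \pi_E)(U_i)$. Since each $U_i$ is $(G \times G)$-invariant, it is saturated with respect to $\pi_E \times \pi_E$, and by \Cref{lemma: pi_E x pi_E is an open quotient map} the restriction $\pi_E \times \pi_E \colon U_i \to \overline{U_i}$ is an open quotient map. In particular $\overline{U_i}$ is open in $\overline{E} \times_{\overline{B}} \overline{E}$, and $\{\overline{U_1},\dots,\overline{U_k}\}$ covers $\overline{E} \times_{\overline{B}} \overline{E}$ since $\pi_E \times \pi_E$ is surjective onto its image.

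Next I would observe that the map $f \colon E^I_B \times_{E/G} E^I_B \to (\overline{E})^I_{\overline{B}}$, $f(\gamma,\delta) = \overline{\gamma} * \overline{\delta}$, is $(G \times G)$-invariant: for any $(g_1,g_2) \in G \times G$,
\[
    f\bigl((g_1,g_2)\cdot(\gamma,\delta)\bigr)
        = \overline{g_1 \cdot \gamma} * \overline{g_2 \cdot \delta}
        = \overline{\gamma} * \overline{\delta}
        = f(\gamma,\delta).
\]
Combined with $(G \times G)$-equivariance of $s_i$, the composite $f \circ s_i \colon U_i \to (\overline{E})^I_{\overline{B}}$ is $(G \times G)$-invariant, so by the universal property of the quotient map $\pi_E \times \pi_E|_{U_i}$ there is a unique continuous map $\overline{s_i} \colon \overline{U_i} \to (\overline{E})^I_{\overline{B}}$ with $\overline{s_i} \circ (\pi_E \times \pi_E)|_{U_i} = f \circ s_i$.

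Finally I would check that $\overline{s_i}$ is a section of $\overline{\Pi}$. Using the commutative diagram preceding \Cref{lemma: pi_E x pi_E is an open quotient map} together with $\Pi \circ s_i = i_{U_i}$, I compute
\[
    \overline{\Pi} \circ \overline{s_i} \circ (\pi_E \times \pi_E)|_{U_i}
        = \overline{\Pi} \circ f \circ s_i
        = (\pi_E \times \pi_E) \circ \Pi \circ s_i
        = i_{\overline{U_i}} \circ (\pi_E \times \pi_E)|_{U_i},
\]
and surjectivity of $(\pi_E \times \pi_E)|_{U_i}$ gives $\overline{\Pi} \circ \overline{s_i} = i_{\overline{U_i}}$. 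Hence $\{(\overline{U_i}, \overline{s_i})\}_{i=1}^{k}$ is a parametrized motion planning cover for $\overline{p}$, yielding $\TC[\overline{p} \colon \overline{E} \to \overline{B}] \leq k$. The only delicate step is verifying $(G \times G)$-invariance (rather than just equivariance) of $f$, which is exactly what enables the descent; the rest is diagram chasing combined with the quotient/saturation lemma.
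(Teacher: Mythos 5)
Your proposal is correct and follows essentially the same route as the paper's proof: push each $(G\times G)$-invariant open set forward along the open quotient map $\pi_E\times\pi_E$, descend $f\circ s_i$ by the universal property of the quotient, and verify the section identity via the commuting square. The only difference is that you spell out the $(G\times G)$-invariance of $f$ needed to invoke the universal property, which the paper leaves implicit.
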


\begin{proof}
    Suppose $U$ is a $(G \times G)$-invariant open subset of $E \times_{B/G} E$ with a $(G \times G)$-equivariant section $s$ of $\Psi$ over $U$. 
    Then $\overline{U} := (\pi_E \times \pi_E)(U)$ is an open subset of $\overline{E} \times_{\overline{B}} \overline{E}$, by \Cref{lemma: pi_E x pi_E is an open quotient map}. 
    As $U$ is $(G \times G)$-invariant, it follows that $U$ is saturated with respect to $\pi_E \times \pi_E$. 
    Hence, $\pi_E \times \pi_E \colon U \to \overline{U}$ is a quotient map.
    Then, by the universal property of quotient maps, there exists a unique continuous map $\overline{s} \colon \overline{U} \to E^{I}_B$ such that the following diagram 
    \[
    \begin{tikzcd}
    U \arrow[r, "f \circ s"] \arrow[d, "\pi_E \times \pi_E"'] & \overline{E}^I_B \\
    \overline{U} \arrow[ru, "\overline{s}"', dashed]          &                 
    \end{tikzcd}
    \]
    commutes. 
    Then 
    $$
    \overline{\Pi} ( \overline{s} (\overline{e_1},\overline{e_2})) 
        = \overline{\Pi}(f(s(e_1,e_2))) 
        = (\pi_E \times \pi_E) (\Psi(s(e_1,e_2)))
        = (\pi_E \times \pi_E)(e_1,e_2)
        = (\overline{e_1},\overline{e_2})
    $$
    implies $\overline{s}$ is a section of $\overline{\Pi}$ over $\overline{U}$. Thus, the result follows since $\pi_E \times \pi_E \colon E \times_{B/G} E \to \overline{E} \times_{\overline{B}} \overline{E}$ is surjective.
\end{proof}

Recall that a space $X$ is called \emph{hereditary paracompact} if every subspace of $X$ is paracompact. Equivalently, every open subspace of $X$ is paracompact.

\begin{theorem}
\label{thm: inv-para-tc under free action}
Suppose $G$ is a compact Lie group. 
Let $p \colon E \to B$ be a $G$-fibration and let $\overline{p} \colon \overline{E} \to \overline{B}$ be the induced fibration between the orbit spaces. 
If the $G$-action on $E$ is free and $\overline{E} \times \overline{E}$ is hereditary paracompact, then
    \[
     \TC^{G}[p \colon E \rightarrow B]
        = \TC[\overline{p} \colon \overline{E} \rightarrow \overline{B}]. 
    \]
\end{theorem}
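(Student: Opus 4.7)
The preceding proposition yields $\TC[\overline{p} \colon \overline{E} \to \overline{B}] \leq \TC^G[p \colon E \to B]$, so the task is to prove the reverse inequality $\TC^G[p \colon E \to B] \leq \TC[\overline{p} \colon \overline{E} \to \overline{B}]$. My plan is to combine the compressibility characterizations of \Cref{thm: equivalent defn of a section of equi-para-tc} (applied with trivial group) and \Cref{thm: equivalent defn of a section of in-para-tc}, lifting a compression witnessing $\TC[\overline{p}]$ through the orbit map $\pi_E \times \pi_E$ to a $(G\times G)$-equivariant compression witnessing $\TC^G[p]$. The key structural input is that the free $G$-action on $E$ makes $\pi_E \times \pi_E$ a principal $(G\times G)$-bundle, hence a $(G\times G)$-fibration.

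I would begin by fixing an open set $\overline{U} \subseteq \overline{E} \times_{\overline{B}} \overline{E}$ admitting a continuous section of $\overline{\Pi}$; by \Cref{thm: equivalent defn of a section of equi-para-tc} this gives a homotopy $\overline{H} \colon \overline{U} \times I \to \overline{E} \times_{\overline{B}} \overline{E}$ with $\overline{H}(\cdot, 0) = i_{\overline{U}}$ and $\overline{H}(\overline{U}, 1) \subseteq \Delta(\overline{E})$. Set $U := (\pi_E \times \pi_E)^{-1}(\overline{U})$, a $(G\times G)$-invariant open subset of $E \times_{B/G} E$. The composite $\overline{H} \circ ((\pi_E \times \pi_E)|_U \times \mathrm{id}_I) \colon U \times I \to \overline{E} \times_{\overline{B}} \overline{E}$ is $(G\times G)$-invariant, and $i_U$ is a $(G\times G)$-equivariant lift of its initial value. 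Since $G$ is compact Lie acting freely on $E$, $G \times G$ acts freely on $E \times_{B/G} E$, and together with the hereditary paracompactness of $\overline{E}$ (which supplies paracompactness of the relevant subspaces), Gleason's theorem guarantees that $\pi_E \times \pi_E$ is a principal $(G\times G)$-bundle. The equivariant homotopy lifting property then yields a $(G\times G)$-equivariant homotopy $H \colon U \times I \to E \times_{B/G} E$ with $H(\cdot, 0) = i_U$ and $(\pi_E \times \pi_E) \circ H = \overline{H} \circ ((\pi_E \times \pi_E)|_U \times \mathrm{id}_I)$.

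The main technical obstacle is establishing the equivariant homotopy lifting property for this principal bundle in the generality required; I would argue it by combining ordinary HLP on the paracompact base with uniqueness of lifts up to the fibrewise $(G\times G)$-action. Granted this, evaluation at $t = 1$ gives $(\pi_E \times \pi_E)(H(u, 1)) = \overline{H}((\pi_E \times \pi_E)(u), 1) \in \Delta(\overline{E})$, which forces $H(u, 1) \in (\pi_E \times \pi_E)^{-1}(\Delta(\overline{E})) = E \times_{E/G} E = \mathbb{k}(E)$. Hence $H$ is a $(G\times G)$-equivariant compression of $U$ into $\mathbb{k}(E)$, and \Cref{thm: equivalent defn of a section of in-para-tc} furnishes a $(G\times G)$-equivariant section of $\Pi$ over $U$. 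Performing this construction on each member of an open cover of $\overline{E} \times_{\overline{B}} \overline{E}$ of minimal size $\TC[\overline{p} \colon \overline{E} \to \overline{B}]$ produces a $(G\times G)$-invariant open cover of $E \times_{B/G} E$ of the same cardinality whose members admit such sections, giving $\TC^G[p] \leq \TC[\overline{p}]$ and hence, combined with the preceding proposition, the desired equality.
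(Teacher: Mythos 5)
Your proposal is correct and follows essentially the same route as the paper: the first inequality comes from the preceding proposition, and for the reverse you pull back an open set $\overline{U}$ and its compression homotopy into $\Delta(\overline{E})$ along $\pi_E \times \pi_E$, lift it $(G\times G)$-equivariantly to $U=(\pi_E\times\pi_E)^{-1}(\overline{U})$ using freeness of the action, observe that the time-one map lands in $(\pi_E\times\pi_E)^{-1}(\Delta(\overline{E}))=E\times_{E/G}E=\mathbb{k}(E)$, and conclude via the two characterization theorems. The only difference is the justification of the equivariant lifting step: the paper invokes Palais's Covering Homotopy Theorem (Bredon, Theorem II.7.3) together with the open-quotient lemma for $\pi_E\times\pi_E$, whereas you would reconstruct the same lifting from Gleason's principal $(G\times G)$-bundle structure and bundle homotopy invariance over a paracompact base --- the same fact in different packaging.
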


\begin{proof}
We note that, in \Cref{thm: secat(overline(p)) = secat_G(p) for free actions}, it was established that $\overline{p}$ is a fibration.
Suppose $\overline{U}$ is an open subset of $\overline{E} \times_{\overline{B}} \overline{E}$ with section $\overline{s}$ of $\overline{\Pi}$ over $\overline{U}$. 
Then, by \Cref{thm: equivalent defn of a section of equi-para-tc} for the trivial group action, there exists a homotopy $\overline{H} \colon \overline{U} \times I \to \overline{E} \times_{\overline{B}} \overline{E}$ such that $\overline{H}_0$ is the inclusion map of $i_{\overline{U}} \colon \overline{U} \hookrightarrow \overline{E} \times_{\overline{B}} \overline{E}$ and $\overline{H}_1$ takes values in $\Delta(\overline{E})$.

Let $U = (\pi_E \times \pi_E)^{-1}(\overline{U})$. 
Then $U$ is $(G \times G)$-invariant and $\overline{U}$ is hereditary paracompact.
Note that the following diagram
\[
    \begin{tikzcd}
    U \times \{0\} \arrow[d, hook] \arrow[rrr, hook]                  &  &                                                 & E \times_{B/G} E \arrow[d, "\pi_E \times\pi_E"] \\
    U \times I \arrow[rr, "(\pi_E \times\pi_E) \times \mathrm{id}_I"] &  & \overline{U} \times I \arrow[r, "\overline{H}"] & \overline{E} \times_{\overline{B}} \overline{E}
    \end{tikzcd}
\]
commutes.
As the $G$-action on $E$ is free, it follows the action of $G \times G$ on $E \times_{B/G} E$ and $U$ is free. 
Hence, the homotopy $\overline{H}$ preserves the orbit structure.
Since $G$ is a compact Lie group, by the Covering Homotopy Theorem of Palais \cite[Theorem II.7.3]{bredon-transformation-groups} and \Cref{lemma: pi_E x pi_E is an open quotient map}, it follows that there exists a $(G \times G)$-homotopy $H \colon U \times I \to E \times_{B/G} E$ such that $H_0 = i_{U} \colon U \hookrightarrow E \times_{B/G} E$ and $(\pi_E \times \pi_E) \circ H = \overline{H} \circ ((\pi_E \times \pi_E) \times \mathrm{id}_I)$. 
As $\overline{H}_1$ takes value in $\Delta(\overline{E})$, it follows $H_1$ takes values in $E \times_{E/G} E$. 
Hence, by \Cref{thm: equivalent defn of a section of in-para-tc}, we get a $(G \times G)$-equivariant section of $\Psi$ over $U$. 
Thus, $\TC^{G}[p \colon E \rightarrow B] \leq \TC[\overline{p} \colon \overline{E} \rightarrow \overline{B}]$.
\end{proof}

We note that the main theorem in Lubawski and Marzantowicz's paper, as stated in \Cref{thm: invariance theorem for TC}, can be recovered from \Cref{thm: inv-para-tc under free action} by taking the base space $B$ to be a point. Now, we state some corollaries of this theorem.

\begin{corollary}
\label{cor: para-tc leq para-tc of orbit fibration for free action}
Suppose $G$ is a compact Lie group. 
Let $p \colon E \to B$ be a $G$-fibration and let $\overline{p} \colon \overline{E} \to \overline{B}$ be the induced fibration between the orbit spaces. 
If the $G$-action on $B$ is free and $\overline{E} \times \overline{E}$ is hereditary paracompact, then
\[
    \TC(F) 
        \leq \TC[p \colon E \rightarrow B]
        \leq \TC^{G}[p \colon E \rightarrow B] 
        = \TC[\overline{p} \colon \overline{E} \rightarrow \overline{B}], 
\]
where $F$ is the fibre of $p$.
\end{corollary}

\begin{proof}
Observe that $G$ acts freely on $E$ as well. 
Hence, the result follows from \Cref{cor: subgroup-ineq for inv-para-tc and inv-para-tc = 1 implies F is contractible with free action} and \Cref{thm: inv-para-tc under free action}.
\end{proof}

\begin{corollary}
\label{cor: suff-cond for inv-para-tc = 1 with free action}
    Suppose $G$ is a compact Lie group and $p$ is locally trivial $G$-fibration with fibre $F$, such that $G$ acts trivially on $F$, $G$ acts freely on $B$ and $\overline{E} \times \overline{E}$ is hereditary paracompact.
    If $F$ is contractible and $\overline{E} \times_{\overline{B}}  \overline{E}$ is homotopy equivalent to a $\mathrm{CW}$-complex, then $\TC^G[p \colon E \to B] =1$.
\end{corollary}

\begin{proof}
    By \Cref{prop: quotient-fibration}, we have $\overline{p} \colon \overline{E} \rightarrow \overline{B}$ is a locally trivial fibration with fibre $F$. 
    We note that the fibre of the parametrized fibration $\overline{\Pi} \colon (\overline{E})^{I}_{\overline{B}} \to \overline{E} \times_{\overline{B}} \overline{E}$ induced by $\overline{p} \colon \overline{E} \to \overline{B}$ is the loop space $\Omega F$, which is contractible since $F$ is contractible.
    Hence, $\TC[\overline{p} \colon \overline{E} \rightarrow \overline{B}] = 1$ by obstruction theory.
    Thus, the result follows from the Invariance \Cref{thm: inv-para-tc under free action}.
\end{proof}

\begin{remark}
Suppose $p \colon B \times F \to B$ is the trivial $G$-fibration.
If $G$ acts trivially on $F$, then in \Cref{prop: inv-para-TC for trivial-G-fib}, we showed that
$$
    \TC^G[p \colon B \times F \to B] = \TC(F).
$$
In general, however, if $G$ acts non-trivially on $F$, then the following inequality need not hold:
$$
    \TC^G[p \colon B \times F \to B] = \TC^G(F).
$$
For example, let $E=S^1 \times S^1$ and $B=S^1$. If $G=S^1$ acts on $B$ by left multiplication and diagonally on $E$, then
\begin{align*}
     \TC^{S^1}[p \colon S^1 \times S^1 \to S^1] 
        & = \TC[p/S^1 \colon (S^1 \times S^1)/S^1 \to S^1/S^1] & \text{by \Cref{thm: inv-para-tc under free action}} \\
        & = \TC[S^1 \to \{*\}] \\
        & = \TC(S^1) \\ 
        & = 2. 
\end{align*}
But $\TC^{S^1}(S^1) = \TC(\{*\}) = 1$ by \Cref{thm: invariance theorem for TC}. 
\end{remark}

\begin{example}
Suppose $G$ is a compact Lie group.
Let $\tau \colon P \to B$ be a principal $G$-bundle such that $\overline{P} \times \overline{P}$ is hereditary paracompact and $B$ is paracompact.
Let $X$ be a path-connected $G$-space, and $q \colon P \times_G X \to B$ be the associated fibre bundle with fibre $X$ and structure group $G$. 
Then, by \cite[Theorem 3.4]{Sequential-PTC-and-related-invariants}, it follows that
$$
    \TC(X) \leq \TC[q \colon P \times_G X \to B] \leq \TC_G(X),
$$
where $p \colon P \times X \to P$ is the trivial $G$-fibration.
Moreover, we have
$$
    \TC^G[p \colon P \times X \to P] = \TC[q \colon P \times_G X \to B]
$$
by \Cref{thm: inv-para-tc under free action}.
\end{example}

\begin{example} 
Now we list some examples of $G$-spaces $X$ for which $\TC(X) = \TC_G(X)$. 
Consequently, for each these examples and for any principal $G$-bundles $\tau \colon P \to B$, one has
$$
    \TC^G[p \colon P \times X \to P] = \TC(X),
$$
where $p \colon P \times X \to P$ is the trivial $G$-fibration.
\begin{enumerate}
    
\item Suppose $n_1,\dots,n_m$ are positive integers.
Let $G = \Z_2$ act on each $S^{n_i}$ via the antipodal action. 
If $G$ acts diagonally on the product $\prod_{i=1}^{m} S^{n_i}$, then by \cite[Corollary 4.4]{eqtcprodineq} we have
\begin{align*}
    \TC_G\left(\prod_{i=1}^{m} S^{n_i}\right) 
        = \TC\left(\prod_{i=1}^{m} S^{n_i}\right) 
        = m + l + 1,
\end{align*}
where $l$ denotes the number of even-dimensional spheres among the factors.

\item Suppose $n_1,\dots,n_m$ are integers with $n_i \geq 2$ for each $i$.
Let $G=\Z_2$ act on $S^{n_i}$ by reflection, defined as multiplication by $-1$ in one of the coordinates. 
If $G$ acts diagonally on the $\prod_{i=1}^{m} S^{n_i}$, then
\begin{align*}
    \TC_G\left(\prod_{i=1}^{m} S^{n_i}\right)  
        & \leq \sum_{i=1}^{m} \TC_G(S^{n_i}) - (m-1) & \text{\cite[Theorem 4.2]{eqtcprodineq}} \\
        & = 3m - (m-1) & \text{\cite[Example 5.9]{colmangranteqtc}} \\
        & = 2m+1.
\end{align*}
Moreover, it follows from \cite[Corollary 3.12]{Rudyak2014} that
$$
    \TC\left(\prod_{i=1}^{m} S^{n_i}\right) = m + l + 1,
$$
where $l$ denotes the number of even-dimensional spheres among the factors. 
Hence, if $X = \prod_{i=1}^{m} S^{n_i}$ with all $n_i$ even, then $\TC_G(X) = \TC(X) = 2m+1$.

\item Let $F$ be a $2n$-dimensional quasitoric manifold. 
The $\Z_2$-action on $F$ is described in \cite[Equation~3.2]{B-D-S}, and it is shown in \cite[Corollary~2.10 and Proposition~3.11]{B-D-S} that $\TC_{\Z_2}(F) = \TC(F) = 2n + 1$. 
We refer the reader to \cite{B-D-S} for further details.

\item Let $F = \mathrm{Gr}_d(\C^n)$ be the complex Grassmannian manifold of complex dimension $d(n - d)$. 
Then $F$ admits a $\Z_2$-action induced by complex conjugation. 
It follows from \cite[Example~6.6]{D-S} that $\TC_{\Z_2}(F) = 2d(n - d) + 1 = \TC(F)$. 
In particular, for complex projective space $\C P^n$, we have $\TC_{\Z_2}(\C P^n) = 2n + 1 = \TC(F)$. 
We refer the reader to \cite[Section 5]{D-S} for further details.
\end{enumerate}
\end{example}

\begin{example} 
Here we list some examples in which the invariant topological complexity of $G$-fibration is trivial.
\begin{enumerate}

\item Suppose $G$ is a compact Lie group.
Let $\tau \colon P \to B$ be a principal $G$-bundle such that $\overline{P} \times \overline{P}$ is hereditary paracompact and $B$ is paracompact.
Then
$$
    \TC^{G}[p \colon P \to B] 
        = \TC[\overline{p} \colon \overline{P} \to B]
        = 1,
$$
since $\overline{p}$ is a homeomorphism. 

\item 
Let $E$ and $B$ be path-connected and locally path-connected spaces such that $\overline{E} \times \overline{E}$ is hereditary paracompact.
Suppose $p \colon E \to B$ is a regular covering map and $G$ is its group of covering transformations. 
If $G$ is a compact Lie group, then
$$
    \TC^{G}[p \colon E \to B] 
        = \TC[\overline{p} \colon \overline{E} \to B]
        = 1,
$$
since $\overline{p}$ is a homeomorphism by \cite[Theorem 81.6]{Munkres-Topology}.
\end{enumerate}
\end{example}

\section{Computations for equivariant Fadell-Neuwirth fibrations}
\label{sec: examples}

In this section, we provide estimates for the invariant parametrized topological complexity of equivariant Fadell-Neuwirth fibrations. 
The \emph{ordered configuration space} of $s$ points on $\R^d$, denoted by $F(\R^d,s)$, is defined as
\[
    F(\R^d,s):=\{(x_1,\dots,x_s)\in (\R^d)^s \mid x_i\neq x_j ~\text{ for }~ i\neq j\}.
\]
\begin{definition}[{\cite{FadellNeuwirth}}]
\label{def:fadellNeuiwirth}
The maps 
\[
    p \colon F(\R^d,s+t)\to F(\R^d,s) \quad \text{defined by} \quad p(x_1,\dots,x_{s},y_1,\dots,y_t)=(x_1,\dots,x_s)
\]
are called Fadell-Neuwirth fibrations.    
\end{definition}

The space $F(\R^d,s+t)$ admits a free action of the permutation group $\Sigma_s$, defined as follows.
For $\sigma\in \Sigma_s$, let
\[
    \sigma\cdot (x_1,\dots,x_s,y_1,\dots,y_t)=(x_{\sigma(1)},\dots,x_{\sigma(s)},y_1,\dots,y_t).
\]
Similarly, $\Sigma_s$ acts freely on $F(\R^d,s)$ by permuting the coordinates $(x_1,\dots,x_s)$.
Notice that the map $p$ in \Cref{def:fadellNeuiwirth} is $\Sigma_s$-equivariant. 
In fact, in \cite{D-EqPTC}, it was demonstrated that this map is a $\Sigma_s$-fibration.

For the rest of the section, we will use the notation $p \colon E \to B$ for the equivariant Fadell-Neuwirth fibration.
The fibre $F$ of $p$ is the configuration space of $t$ points on $\R^d \setminus \mathcal{O}_s$, where $\mathcal{O}_s$ denotes a set of $s$ distinct points in $\R^d$.
More precisely, $F = F(\R^d \setminus \mathcal{O}_s, t)$. 

\begin{remark}
\label{rem: special cases of Fadell}
    If $t = 0$, then $p$ is the identity map. 
    In particular, $p$ is a trivial $\Sigma_s$-fibration with fibre a point $\{*\}$. 
    Hence, by \Cref{prop: inv-para-TC for trivial-G-fib}, it follows that 
    $$
        \TC^{\Sigma_s}[p \colon F(\R^d,s)\to F(\R^d, s)] 
            = \TC(\{*\}). 
    $$
    Thus, we will assume that $t \geq 1$.
    
    If $s=1$, then the permutation group $\Sigma_1$ is trivial and $F(\R^d,1) = \R^d$. 
    In particular, the fibration $p$ is trivial. 
    Hence, it follows that
    \begin{align*}
        \TC^{\Sigma_1}[p \colon F(\R^d,1+t)\to F(\R^d, 1)] 
            & = \TC[p \colon F(\R^d,1+t)\to F(\R^d, 1)] & \text{by \Cref{prop: special cases of inv-para-tc} (1)} \\
            & = \TC(F(\R^d \setminus \mathcal{O}_1, t)) & \text{by \cite[Example 4.2]{farber-para-tc}} \\
            & = \TC(F(\R^d,t+1)),
    \end{align*}
    since $F(\R^d \setminus \mathcal{O}_1, t)$ is homotopy equivalent to $F(\R^d,t+1)$, see \cite[Page 15]{fadell-husseini-conf-spaces}. 
    We note that topological complexity of configuration spaces was computed by Farber and Grant in \cite{TC-of-configuration-spaces}.
    Thus, we will assume that $s \geq 2$.
    
    If $d = 1$, then the fibre $F(\R\setminus \mathcal{O}_s,t)$ of $p$ is not path-connected.
    Hence, by \Cref{cor: subgroup-ineq for inv-para-tc and inv-para-tc = 1 implies F is contractible with free action}, it follows that
    $$
        \infty 
            = \TC(F(\R\setminus \mathcal{O}_s,t)) 
            \leq \TC^{\Sigma_s}[p \colon F(\R,s+t) \to F(\R, s)].
    $$
    Thus, we will assume that $d \geq 2$.
\end{remark}

The parametrized topological complexity of these fibrations was computed in \cite{farber-para-tc} and \cite{PTCcolfree}. 
In particular, they proved the following result: 

\begin{theorem}[{\cite[Theorem 9.1]{farber-para-tc}} and {\cite[Theorem 4.1]{PTCcolfree}}]
\label{thm: ptc-Fadell_Neuwirth}
Suppose $s\geq 2$, $t\geq 1$ and $d \geq 2$. 
Then
\[
    \TC[p \colon F(\R^d,s+t)\to F(\R^d, s)] =
\begin{cases}
    2t+s, & \text{if $d$ is odd},\\
    2t+s-1 & \text{if $d$ is even}.
\end{cases}
\]
\end{theorem}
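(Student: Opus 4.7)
The plan is to obtain matching upper and lower bounds on the parametrized topological complexity of the Fadell–Neuwirth fibration $p\colon F(\R^d,s+t)\to F(\R^d,s)$.

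For the upper bound I would apply the parametrized analogue of Schwarz's dimension–connectivity estimate. The fibre of $p$ over a configuration $(x_1,\ldots,x_s)$ is $F(\R^d\smallsetminus\{x_1,\ldots,x_s\},t)$, which is $(d-2)$-connected for $d\ge 2$ and has a CW-model of dimension $(d-1)t$. Since $E\times_B E$ fibres over $F(\R^d,s)$ with fibre the square of that configuration space, a dimensional count combined with an explicit fibrewise motion-planning construction — the moving $t$ points of one configuration must be steered to those of the other while avoiding the common $s$ "obstacle" points — yields $\TC[p]\le 2t+s$. In the even-dimensional case, one exploits a nowhere-vanishing tangent vector field on $S^{d-1}$ to canonically prescribe a small perturbation direction whenever a moving point would collide with an obstacle, saving one open set and giving $\TC[p]\le 2t+s-1$.

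For the lower bound I would use zero-divisor cup-length in the rational cohomology of $E\times_B E$. By the Arnold–Cohen computation, $H^*(F(\R^d,n);\mathbb{Q})$ is generated by classes $A_{ij}$ of degree $d-1$ for $1\le i<j\le n$, subject to the Arnold relations. Pulling back along the two projections $E\times_B E\rightrightarrows E$, each $A_{ij}$ produces two classes $A_{ij}^{(1)}$ and $A_{ij}^{(2)}$, and the differences $\bar A_{ij}:=A_{ij}^{(1)}-A_{ij}^{(2)}$ vanish on the diagonal $\Delta(E)\subset E\times_B E$. The zero-divisor cup-length lower bound then reduces matters to exhibiting a non-zero product of such $\bar A_{ij}$'s of the required length. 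A careful combinatorial selection, using $\bar A_{ij}$'s associated with pairs $(i,j)$ having $j\in\{s+1,\ldots,s+t\}$ and extra classes coming from mixed pairs in $\{1,\ldots,s\}\times\{s+1,\ldots,s+t\}$, and evaluating against a top class in the fibre via a Leray–Hirsch decomposition of $H^*(E\times_B E)$ over $H^*(F(\R^d,s))$, yields a non-vanishing product of length $2t+s$ when $d$ is odd and $2t+s-1$ when $d$ is even.

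The main obstacle is the parity dependence, on both sides. When $d$ is even the generators $A_{ij}$ live in odd degree and therefore square to zero, so the naive "$2t+s$" cup-length count collapses and one is forced to work with a non-symmetric combination of zero-divisors, whose non-vanishing has to be certified by an explicit Leray–Hirsch bookkeeping. Correspondingly, on the upper-bound side, achieving $2t+s-1$ in the even case genuinely requires the tangent-field ingredient and is more delicate than the odd case. Once these parity issues are reconciled, the two bounds meet and yield the stated formula.
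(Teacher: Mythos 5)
First, a point of reference: the paper does not prove this statement at all --- it is imported verbatim from Cohen--Farber--Weinberger (\cite[Theorem 9.1]{farber-para-tc} for $d$ odd and \cite[Theorem 4.1]{PTCcolfree} for $d$ even), so the comparison has to be with those proofs rather than with anything in this paper. Your outline does correctly identify their strategy: the odd-dimensional upper bound follows from the dimension--connectivity estimate applied to $E\times_B E$, which fibres over $F(\R^d,s)$ (homotopy dimension $(d-1)(s-1)$) with fibre $F\times F$ (homotopy dimension $2(d-1)t$), giving $\TC[p]<2t+s+\tfrac{1}{d-1}$; and the lower bound is a zero-divisor cup-length computation with the Arnold classes $A_{ij}$, where the parity of $\deg A_{ij}=d-1$ decides whether $\bar A_{ij}^{\,2}=-2\,A_{ij}\otimes A_{ij}\neq 0$ (odd $d$) or $\bar A_{ij}^{\,2}=0$ (even $d$), which is exactly the source of the drop by one in the even case.

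That said, as a proof the proposal has two genuine gaps, and they are precisely the two places where the cited theorems carry their content. (1) The non-vanishing of the chosen product of zero-divisors of length $2t+s$ (resp.\ $2t+s-1$) is asserted via ``a careful combinatorial selection \ldots and evaluating against a top class,'' but this is the technical heart of \cite[Theorem 9.1]{farber-para-tc}: one must exhibit a specific monomial, expand it using the Arnold relations in $H^*(E\times_B E)$, and verify that a particular basis element survives with non-zero coefficient. Nothing in the sketch guarantees that the required length is actually achieved, and for even $d$ the relations make it non-obvious that $2t+s-1$ (rather than something smaller) is attainable. (2) The even-dimensional upper bound $\TC[p]\le 2t+s-1$ is the hardest part of the whole theorem --- it is the main result of the separate paper \cite{PTCcolfree} --- and ``a nowhere-vanishing tangent vector field on $S^{d-1}$ prescribes a perturbation direction, saving one open set'' is a heuristic borrowed from the unparametrized Farber--Yuzvinsky argument, not a construction. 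In the parametrized setting one must produce $2t+s-1$ open domains of $E\times_B E$, each admitting a continuous section of $\Pi$ that is simultaneously valid for all base configurations, and it is not explained how the local perturbation rule assembles into such a cover with exactly one fewer set. Until those two steps are carried out, the argument establishes only the easy inequalities (the odd upper bound and whatever cup-length one can verify by hand), not the stated equalities.
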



We will now demonstrate that the invariant parametrized topological complexity of the Fadell–Neuwirth fibrations coincides with that of the corresponding orbit fibrations. 
Furthermore, it is bounded below by the parametrized topological complexity of the Fadell–Neuwirth fibrations. 
Interpreting the parametrized topological complexity of the Fadell–Neuwirth fibration as describing the motion planning problem of $t$ submarines navigating in the presence of $s$ mines whose positions are unknown, and that of the orbit Fadell–Neuwirth fibration as describing the corresponding problem when the order of the mines is irrelevant, the resulting increase in complexity is entirely consistent with intuition.
When the order of the mines is irrelevant, the motion planners must satisfy an additional constraint: they are required to remain invariant under any reordering of the mines. In other words, the planner must yield the same motion plan regardless of how the mines are permuted. This symmetry condition imposes additional structural restrictions on the class of admissible planners, thereby increasing the overall complexity of constructing such a universal algorithm.

\begin{theorem}
\label{thm: tc-Fadell-Neuwith-orbit-fib}
The induced map $\overline{p} \colon \overline{F(\R^d,s+t)} \to \overline{F(\R^d, s)}$ is a locally trivial fibration with fibre $F$. 
Moreover,
    \begin{align*}
        \TC[p \colon F(\R^d,s+t) \to F(\R^d, s)] 
            & \leq \TC^{\Sigma_s}[p \colon F(\R^d,s+t)\to F(\R^d, s)] \\
            & = \TC[\overline{p} \colon \overline{F(\R^d,s+t)} \to \overline{F(\R^d, s)}]
    \end{align*}
\end{theorem}

\begin{proof}
    We note that the equivariant Fadell-Neuwirth fibrations are locally $\Sigma_s$-trivial with $\Sigma_s$ acting trivially on the fibre $F$, see \cite[Section 5.1]{D-EqPTC}. 
    Since $F(\R^d, s)$ is a manifold, it is paracompact Hausdorff. 
    Hence, by \Cref{prop: quotient-fibration}, it follows that the induced map
    $\overline{p}$ is a locally trivial fibration with fibre $F$. 
    
    As the action of $\Sigma_s$ on $F(\R^d,s+t)$ and $F(\R^d,s)$ is free, and since $F(\R^d,s+t)$ and $F(\R^d,s)$ are manifolds, it follows that $\overline{F(\R^d,s+t)}$ and $\overline{F(\R^d,s)}$ are also manifolds. 
    Thus, the result follows from \Cref{cor: para-tc leq para-tc of orbit fibration for free action}.
\end{proof}

\begin{proposition}
\label{prop: Fadell-Neuwirth-dim}
    Suppose $p \colon E \to B$ denotes the equivariant Fadell-Neuwirth fibration with fibre $F$. 
    Then 
    $\mathrm{hdim}(\overline{E} \times_{\overline{B}} \overline{E}) 
            \leq (d-1)(s-1) + 2dt$.
\end{proposition}

\begin{proof}
Since $\overline{p} \colon \overline{E} \to \overline{B}$ is a locally trivial fibration with fibre $F$, it follows that the natural map $\overline{E} \times_{\overline{B}} \overline{E} \to \overline{B}$ is also a locally trivial fibration with fibre $F \times F$. 
The homotopy dimension of the unordered configuration space $\overline{B}$ is $(d-1)(s-1)$ (see \cite[Theorem 3.13]{B-Z}).
Hence, by \Cref{prop: homotopy dimension of total space}, the desired claim follows.
\end{proof}


We are now ready to present our computations for the invariant parametrized topological complexity of the Fadell-Neuwirth fibrations for the case $d \geq 3$.

\begin{theorem}
\label{thm:estimates-for-FN-fibrations}
Suppose $s\geq 2$, $t\geq 1$ and $d \geq 3$. Then
\begin{equation}
\label{eq:invptc-FN-estimates}
\TC^{\Sigma_s}[p \colon F(\R^d,s+t)\to F(\R^d, s)]=
\begin{cases}
   2t+s, & \text{if $d$ is odd},\\
   \text{either } 2t+s-1 \text{ or } 2t+s & \text{if $d$ is even}.
\end{cases}   
\end{equation}  
\end{theorem}

\begin{proof}
    It suffices to show that $\TC^{\Sigma_s}[p \colon F(\R^d,s+t)\to F(\R^d, s)] \leq 2t+s$, since the inequality 
    \[
        \TC[p \colon F(\R^d,s+t)\to F(\R^d, s)] 
        \leq \TC[\overline{p} \colon \overline{F(\R^d,s+t)} \to \overline{F(\R^d, s)}]
    \]
    established in \Cref{thm: tc-Fadell-Neuwith-orbit-fib}, together with \Cref{thm: ptc-Fadell_Neuwirth} yields the desired lower bound.
    
    Observe that 
    $$
        \TC^{\Sigma_s}[p \colon F(\R^d,s+t)\to F(\R^d, s)] 
            = \TC[\overline{p} \colon \overline{F(\R^d,s+t)} \to \overline{F(\R^d, s)}]
            \leq 2t+s
    $$
    if and only if $(2t+s)$-fold fibrewise  join 
    $$
        \overline{\Pi}_{2t+s} \colon *_{2t+s} (\overline{E}^I_{\overline{B}}) \to \overline{E} \times_{\overline{B}} \overline{E}
    $$
    of the fibration $\overline{\Pi}$ admits a global section, see \cite[Theorem 3]{Sva}.  
    Note that the loop space $\Omega F$, which is the fibre of $\Pi$, is path-connected because $\Omega F$ is $(d-3)$-connected and $d \geq 3$.
    Therefore, the fibre $*_{2t+s}\Omega F$ of $\overline{\Pi}_{2t+s}$ is simply connected, and hence $k$-simple for all $k$.
    Thus, the obstructions to a global section of $\overline{\Pi}_{2t+s}$ lie in the cohomology groups
    $$
        H^{k+1}(\overline{E} \times_{\overline{B}} \overline{E}; \pi_{k}(*_{2t+s}\Omega F)),
    $$
    see \cite[Theorem 34.2]{Steenrod}.
    Since $*_{2t+s}\Omega F$ is $((2t+s)(d-1)-2)$-connected (see \cite[Theorem 5]{Sva} or \cite[Theorem 3.5]{grant2019symmetrized} for the homotopy connectivity of joins), it follows that $H^{k+1}(\overline{E} \times_{\overline{B}} \overline{E}; \pi_{k}(*_{2t+s}\Omega F)) = 0$ for $k \leq (2t+s)(d-1)-2$.
    
    Suppose $\mathcal{N}$ is any local coefficient system on $\overline{B}$. 
    Let $\mathcal{M}$ be the local coefficient system on $\overline{E} \times_{\overline{B}} \overline{E}$ obtained as the pullback of $\mathcal{N}$ under the fibration 
    \begin{equation}
    \label{eq: fib over bar(B)}
         F \times F \hookrightarrow \overline{E} \times_{\overline{B}} \overline{E} \to \overline{B}.
    \end{equation}
    Consider the Serre spectral sequence $E^{p,q}_r$ with local coefficients for the fibration \eqref{eq: fib over bar(B)}, see \cite[Theorem 2.9]{Siegel-Spectral-sequence}.
    Then $E_2^{p,q} = H^p(\overline{B}, H^{q}(F \times F, \mathcal{M}))$, and the spectral sequence converges to $H^{p+q}(\overline{E} \times_{\overline{B}} \overline{E},\mathcal{M})$.
    If $p > \mathrm{hdim}(\overline{B})$ or $q > \mathrm{hdim}(F \times F)$, then $H^p(\overline{B}, H^{q}(F \times F, \mathcal{M})) = 0$. 
    Hence, 
    \begin{align*}
        H^{p+q}(\overline{E} \times_{\overline{B}} \overline{E},\mathcal{M}) = 0 
        \text{ if } p+q & > \mathrm{hdim}(\overline{B}) + \mathrm{hdim}(F \times F)\\
                        & = (d-1)(s-1)+2(d-1)t\\
                        & = (2t+s-1)(d-1),
    \end{align*}
    see \cite[Equation 4.1]{PTCcolfree} for the homotopy dimension of $F$. 
    Since $d \geq 3$, the space $F \times F$ is simply connected.
    Consequently, the induced map $\pi_1(\overline{E} \times_{\overline{B}} \overline{E}) \to \pi_1(\overline{B})$ is an isomorphism.
    This can be deduced from the long exact sequence of homotopy groups corresponding to fibration \eqref{eq: fib over bar(B)}.
    This implies that every local coefficient on $\overline{E} \times_{\overline{B}} \overline{E}$ is a pullback of a local coefficient system on $\overline{B}$.
    As a result, we have
    $
        H^{k+1}(\overline{E} \times_{\overline{B}} \overline{E}; \pi_{k}(*_{2t+s}\Omega F)) = 0
    $
    for $k > (2t+s-1)(d-1)-1 = (2t+s)(d-1)-d$.
    Thus, the obstruction classes vanishes for all $k$.
\end{proof}

We now turn our attention to the case $d=2$.
For $d=2$, we note that the spaces $E$ and $B$ are aspherical, see \cite[Lemma 3.4]{grant2018artin-braid}.
Since the maps $E \to \overline{E}$ and $B \to \overline{B}$ are covering maps, it follows that $\overline{E}$ and $\overline{B}$ are also aspherical, as $E \to \overline{E}$ and $B \to \overline{B}$ are covering maps.
Therefore, we can apply the techniques developed by Grant in \cite{Grant-ParaTC-group} to compute $\TC[\overline{p} \colon \overline{E} \to \overline{B}]$, as the map $\overline{p} \colon \overline{E} \to \overline{B}$ is a fibration of aspherical spaces with path-connected fibre $F$.
More precisely, this is equivalent to computing the topological complexity of the group epimorphism $\overline{p}_* \colon \pi_1(\overline{E}) \to \pi_1(\overline{B})$.
We begin by recalling some definitions that will be useful in the discussion.
Recall that a pointed map $f \colon X \to Y$ is said to realize a group homomorphism $\rho \colon G \to H$ if $X$ is a $K(G, 1)$ space, $Y$ is a $K(H, 1)$ space, and $f$ induces the homomorphism $\rho$ on fundamental groups.

\begin{definition}[{\cite{davis2000poincare}}]
    Suppose $G$ is a group and $K(G,1)$ is the corresponding Eilenberg-MacLane space.
    \begin{itemize}
        \item Then $G$ is said to be of type $F$ if $K(G,1)$ is homotopy equivalent to a finite $\mathrm{CW}$-complex.
        \item Then $G$ is said to be of type $FP$ if there exists a finite length resolution of $\Z$ by finitely generated projective $\Z G$-modules.
    \end{itemize}
\end{definition}

We note that a group of type $F$ is of type $FP$, see \cite[Page 171]{davis2000poincare}.

\begin{definition}[{\cite[Theorem 3.6]{davis2000poincare}}]
    A group $G$ is said to be a duality group of cohomological dimension $n$ if $G$ is of type $FP$, and there exists a $\Z G$-module $D$ such that 
    $$
    H^{i}(G,\Z G) \cong
    \begin{cases}
        0 & \text{for }i \neq n,\\
        D & \text{for }i = n.
    \end{cases}
    $$
    If $D$ can be chosen to have underlying abelian group $\Z$, then $G$ is said to be a Poincar\'{e} duality group of cohomological dimension $n$.
\end{definition}

    We now use arguments presented in \cite[Theorem 3]{wall-poincare-duality} to prove the following lemma.
    This lemma, in a certain sense, generalizes part of \cite[Theorem 4.3]{davis2000poincare} from Poincaré duality groups to duality groups.

\begin{lemma}
\label{lemma: extension of duality groups converse}
    Suppose $1 \to H \to G \to K \to 1$ is a short exact sequence of groups, where $H$ is a Poincar\'{e} duality group of cohomological dimension $h$.
    Then $H^{p}(K,\Z K) = 0$ if and only if $H^{p+h}(G,\Z G) = 0$.
    In particular, if $K$ is of type $FP$ and $G$ is a duality group of cohomological dimension $h+k$, then $K$ is a duality group of cohomological dimension $k$.
\end{lemma}

\begin{proof}
    Consider the Lyndon-Hochschild-Serre spectral sequence corresponding to the extension $1 \to H \to G \to K \to 1$ with coefficients in $\Z G$. 
    Then $E_2^{p,q} = H^p(K;H^q(H;\Z G))$, and the spectral sequence converges to $H^{p+q}(G;\Z G)$.
    As $\Z G$ is a free $\Z H$-module and $H$ is a Poincar\'{e} duality group of dimension $h$, it follows that
    $$
        H^{q}(H;\Z G) 
            \cong H^{q}(H;\Z H) \otimes_{\Z H} \Z G
            \cong
            \begin{cases}
                0 & \text{if } q \neq h,\\
                \Z \otimes_{\Z H} \Z G \cong \Z K & \text{if } q = h,
            \end{cases}
    $$
    as $K$-modules. 
    This implies
    $$
        E_2^{p,q}
            = H^{p}(K;H^{q}(H;\Z G)) 
            \cong 
            \begin{cases}
                0 & \text{if } q \neq h,\\
                H^{p}(K; \Z K) & \text{if } q = h.
            \end{cases}
    $$
Since the differential $d_r$ maps $E_r^{p,q}$ to $E_r^{p+r,q-r+1}$, it follows that $H^{p}(K;H^{q}(H;\Z G)) = E_2^{p,q} = E_{\infty}^{p,q} = H^{p+q}(G;\Z G)$.
In particular, $H^p(K;\Z K)=0$ if and only if $H^{p+h}(G, \Z G) = 0$.
\end{proof}

\begin{theorem}
\label{thm:estimates-for-FN-fibrations for d=2}
Suppose $s\geq 2$ and $t\geq 2$. 
Then
\begin{equation}
\label{eq:invptc-FN-estimates for d=2}
    \TC^{\Sigma_s}[p \colon F(\R^2,s+t)\to F(\R^2, s)] 
        =  2t+s-1.
\end{equation}  
\end{theorem}

\begin{proof}
    Suppose $B_{s+t}$ is the braid group on $(s+t)$-strands.
    Then, by \cite[Lemma 3.4]{grant2018artin-braid}, the fundamental group of $F(\mathbb{R}^2,s+t)/\Sigma_s$ is 
    $$
        B_{s+t}^{\Sigma_s} := \pi^{-1}(\Sigma_s \times \{1\}^t),
    $$
    where $\pi \colon B_{s+t} \to \Sigma_{s+t}$ is the canonical map.
    Moreover, by \cite[Lemma 3.6]{grant2018artin-braid}, we have $B_{s+t}^{\Sigma_s}$ is a duality group of cohomological dimension $s+t-1$. 

    We note that the fibration $\overline{p} \colon \overline{F(\R^2,s+t)} \to \overline{F(\R^2, s)}$ realizes the group epimorphism $\rho \colon B_{s+t}^{\Sigma_s} \twoheadrightarrow B_s$ which forgets the last $t$ strands.
    Hence, it follows that
    \begin{align*}
    \TC^{\Sigma_s}[p \colon F(\R^2,s+t)\to F(\R^2, s)] 
        & =  \TC[\overline{p} \colon \overline{F(\R^2,s+t)} \to \overline{F(\R^2, s)}] \\
        & = \TC[\rho \colon B_{s+t}^{\Sigma_s} \twoheadrightarrow B_{s}],
    \end{align*}
    by \Cref{thm: inv-para-tc under free action} and \cite[Proposition 3.5]{Grant-ParaTC-group}.
    
    If $Z$ is the centre of $B_{s+t}$, then $Z$ is infinite cyclic and the centre of $B_{s+t}^{\Sigma_s}$ is $Z$ as well, see \cite[Lemma 3.7]{grant2018artin-braid}.
    Hence, by \cite[Theorem 3.5]{Grant-ParaTC-group}, it follows that
    $$
    \TC[\rho \colon B_{s+t}^{\Sigma_s} \twoheadrightarrow B_{s}] 
        \leq \mathrm{cd}\left(\frac{B_{s+t}^{\Sigma_s} \times_{B_s} B_{s+t}^{\Sigma_s}}{\Delta(Z)}\right)+1,
    $$
    where $\mathrm{cd}$ denotes the cohomological dimension of a group. 

Suppose $\overline{P}_{t,s} = \pi_1(F)$. 
Then we have an extension
$$
    1 \to \overline{P}_{t,s} \to B_{s+t}^{\Sigma_s} \xrightarrow{\rho} B_{s} \to 1
$$
corresponding to the fibration $F \hookrightarrow \overline{E} \xrightarrow{\overline{p}} \overline{B}$.
Pulling back this extension by $\rho \colon B_{s+t}^{\Sigma_s} \to B_s$ we get an extension
$$
    1 \to \overline{P}_{t,s} \to B_{s+t}^{\Sigma_s} \times_{B_s} B_{s+t}^{\Sigma_s} \to B_{s+t}^{\Sigma_s} \to 1.
$$
Taking the quotient of $B_{s+t}^{\Sigma_s}$ by $Z$ gives an extension
\begin{equation}
\label{eq: SES final}
    1 \to \overline{P}_{t,s} \to \frac{B_{s+t}^{\Sigma_s} \times_{B_s} B_{s+t}^{\Sigma_s}}{\Delta(Z)} \to \frac{B_{s+t}^{\Sigma_s}}{Z} \to 1.
\end{equation}
We note that $\overline{P}_{t,s}$ and $P_{s+t}$ are duality groups with $\mathrm{cd}(\overline{P}_{t,s}) = t$ and $\mathrm{cd}(P_{s+t}) = s+t-1$, see \cite[Lemma 3.6]{grant2018artin-braid}, where $P_{s+t}$ is the pure braid group on $(s+t)$-strands.

We will now show that $B_{s+t}^{\Sigma_s}/Z$ is a duality group with $\mathrm{cd}(B_{s+t}^{\Sigma_s}/Z) = s+t-2$.
As $t \geq 2$, the inclusion $Z \hookrightarrow B_{s+t}^{\Sigma_s}$ splits.
This can be seen geometrically by the projection $B_{s+t}^{\Sigma_s} \to P_2$ obtained by forgetting the first $s+t-2$ strands, where $P_2$ is the pure braid group on $2$ strands.
Hence,
$$
    B_{s+t}^{\Sigma_s} \simeq Z \times \left(B_{s+t}^{\Sigma_s}/Z\right).
$$
Note that the space $F(\mathbb{R}^d,s+t)$ contains a finite $\mathrm{CW}$-complex $C$
which is a $\Sigma_{s+t}$-equivariant strong deformation retract, see \cite[Theorem 3.13]{B-Z}.
Hence, $C$ is also a $\Sigma_{s}$-equivariant strong deformation retract of $F(\mathbb{R}^d,s+t)$.
Hence, the homotopy equivalence $K(B_{s+t}^{\Sigma_s},1) \simeq_h F(\mathbb{R}^2,s+t)/\Sigma_s \simeq_h C/\Sigma_s$ implies $K(B_{s+t}^{\Sigma_s},1)$ is homotopy equivalent to a finite $\mathrm{CW}$-complex. 
Therefore, the homotopy equivalence
$$
    K(B_{s+t}^{\Sigma_s},1) \simeq_h K(Z,1) \times K(B_{s+t}^{\Sigma_s}/Z,1)
$$
implies $K(B_{s+t}^{\Sigma_s}/Z,1)$ is homotopy equivalent to a finite $\mathrm{CW}$-complex, i.e., $B_{s+t}^{\Sigma_s}/Z$ is a group of type $F$. 
Then \Cref{lemma: extension of duality groups converse} applied to the extension $1 \to Z \hookrightarrow B_{s+t}^{\Sigma_s} \to B_{s+t}^{\Sigma_s}/Z \to 1$ implies that $B_{s+t}^{\Sigma_s}/Z$ is a duality group of cohomological dimension $s+t-2$.
Hence, by \cite[Theorem 3.5]{bieri-duality-groups}, it follows that the middle group in \eqref{eq: SES final} has cohomological dimension $s+2t-2$.
\end{proof}

\begin{remark*}
In this remark, we present an argument showing that $K(B_{s+t}^{\Sigma_s}/Z,1)$ is homotopy equivalent to a finite $\mathrm{CW}$-complex, which was inadvertently left out of the proof above and the published version of this paper.
Since the extension $1 \to Z \hookrightarrow B_{s+t}^{\Sigma_s} \to B_{s+t}^{\Sigma_s}/Z \to 1$ splits, it follows that $K(B_{s+t}^{\Sigma_s}/Z,1)$ is dominated by the finite CW complex $K(B_{s+t}^{\Sigma_s},1)$, that is, there exist maps 
$$
    K(B_{s+t}^{\Sigma_s}/Z,1) 
        \xrightarrow{\sigma} K(B_{s+t}^{\Sigma_s},1) 
        \xrightarrow{p} K(B_{s+t}^{\Sigma_s}/Z,1)
$$
such that $p \circ \sigma \simeq \mathrm{id}_{K(B_{s+t}^{\Sigma_s}/Z,1)}$.
Note that $\mathrm{hdim}(K(B_{s+t}^{\Sigma_s},1)) = \mathrm{hdim}(\overline{F(\R^2,s+t)}) \leq s+t-1 \leq 3$, see \cite[Theorem 3.13]{B-Z}.
Hence, it follows that $H^{s+t}(K(B_{s+t}^{\Sigma_s}/Z,1)) = 0$, since $\sigma^* \colon H^*(K(B_{s+t}^{\Sigma_s},1)) \to H^*(K(B_{s+t}^{\Sigma_s}/Z,1))$ is surjective.
We note that the universal cover of $K(B_{s+t}^{\Sigma_s},1)$ is a CW complex of dimension $s+t-1$, see Exercise 11-8 (Page 303) in Introduction to Topological Manifolds (Second Edition) by John M. Lee.
Hence, by the K\"unneth formula, it follows that 
$$
    H_i(\widetilde{K(B_{s+t}^{\Sigma_s}/Z,1)}) = 0
        \text{ for all } i > s+t-1,
$$ 
where $\widetilde{K(B_{s+t}^{\Sigma_s}/Z,1)}$ is the universal cover of $K(B_{s+t}^{\Sigma_s}/Z,1)$.
Hence, by Theorem E in Finiteness Conditions for CW-Complexes by C.T.C. Wall, it follows that $K(B_{s+t}^{\Sigma_s}/Z,1)$ is homotopy equivalent to a finite CW-complex.
This argument, which shows that $K(B_{s+t}^{\Sigma_s}/Z,1)$ is homotopy equivalent to a finite $\mathrm{CW}$-complex, is adapted from the arguments found in \href{https://mathoverflow.net/questions/267669}{https://mathoverflow.net/questions/267669}. 

\end{remark*}

\begin{remark}
    Note that when $t=1$, the group $B_{s+1}^{\Sigma_s}/Z$ is not torsion free, 
    as shown in \cite[Proposition 4.2]{grant2018artin-braid}.
    Consequently, every $\mathrm{CW}$-complex of type $K(B_{s+1}^{\Sigma_s}/Z,1)$ is infinite dimensional.
    Therefore, the argument used in the preceding theorem fails for the case $t=1$.
\end{remark}

\begin{conjecture}
\label{conj}
    $
        \TC^{\Sigma_s}[F(\R^2,s+1) \to F(\R^2, s)] \leq 3+s.
    $
\end{conjecture}

    We expect the above conjecture to be true, since the motion planning problem for two submarines constrained by the unknown positions of $s$ mines (where the order in which the mines are placed does not matter) should be more complex than that for a single submarine.
    Hence, 
    $$
        \TC[\overline{F(\R^2,s+1)} \to \overline{F(\R^2, s)}] 
            \leq \TC[\overline{F(\R^2,s+2)} \to \overline{F(\R^2, s)}]
            = 3+s,
    $$
    \Cref{conj} is true if $s=1$, see \Cref{rem: special cases of Fadell} and \cite[Theorem 1]{TC-of-configuration-spaces}.
    
    We note that we can get $\TC[\overline{F(\R^2,s+1)} \to \overline{F(\R^2, s)}] \leq 4+s$, using \cite[Proposition 7.2]{farber-para-tc} and \Cref{prop: Fadell-Neuwirth-dim}.
    Moreover, $\TC[\overline{F(\R^2,s+1)} \to \overline{F(\R^2, s)}] \geq 1+s$, by \Cref{thm: ptc-Fadell_Neuwirth}.

\section{Acknowledgements}
We thank Professor Mark Grant for his valuable feedback on an earlier version of this article, which significantly improved the paper in various aspects. 
In particular, we are grateful to him for suggesting an approach to establish a cohomological lower bound via the cohomology of orbit spaces, for recommending the use of obstruction theory in \Cref{thm:estimates-for-FN-fibrations} to improve our earlier bound, and for clearly explaining how to apply the cohomological dimension of the unordered configuration space in the Serre spectral sequence to show that the obstructions vanish.
We also thank Professor Jesus Gonzalez for pointing us to an appropriate reference on the equivariant $\mathrm{CW}$-complex structure on the ordered configuration space, from which we can determine the homotopy dimension of the unordered configuration space.
The first author would like to acknowledge IISER Pune - IDeaS Scholarship and Siemens-IISER Ph.D. fellowship for economical support. 
The second author acknowledges the support of NBHM through grant 0204/10/(16)/2023/R\&D-II/2789 and DST–INSPIRE Faculty Fellowship (Faculty Registration No. IFA24-MA218), Department of Science and Technology, Government of India, as well as support from the Industrial Consultancy and Sponsored Research (IC\&SR), Indian Institute of Technology Madras for the New Faculty Initiation Grant (RF25261395MANFIG009294).
\bibliographystyle{plain} 
\bibliography{references}

\end{document}